\documentclass[a4paper]{article}
\usepackage[margin=15mm]{geometry}
\usepackage[utf8]{inputenc}
\usepackage{graphics}
 \usepackage{a4wide}
\usepackage{epsfig}
\usepackage{xspace}
\usepackage{url}
\usepackage{amsmath,amscd,amsthm}
\usepackage{amsfonts}
\usepackage{multirow,booktabs}
\usepackage{amssymb}
\usepackage{hyperref,tikz,tikz-cd}
\usepackage{caption}
\usepackage{accents}
\usepackage{subcaption}
\usepackage{pdfpages}
\usepackage[mathscr]{euscript}
\usepackage{mathrsfs}
\usepackage{listings}
 \usepackage{enumerate}
 \usepackage{cite}
 \usepackage{hyperref}
 \hypersetup{linkcolor=blue, colorlinks=true ,citecolor = red}
\usetikzlibrary{decorations.pathreplacing,decorations.markings}

\newtheorem{theorem}{Theorem}[section]
\newtheorem{lemma}[theorem]{Lemma}

\newtheorem{corollary}[theorem]{Corollary}
\newtheorem{remark}[theorem]{Remark}
\newtheorem{definition}[theorem]{Definition}

\setlength{\textwidth}{469.75499pt}
\setlength{\textheight}{643.20255pt}
\setlength{\oddsidemargin}{0 mm}
\setlength{\evensidemargin}{0 mm}
\setlength{\topmargin}{0 mm}
\setlength{\headsep}{0 mm}
\setlength{\headheight}{0 mm}
\catcode`@=11 \@addtoreset{equation}{section}
\renewcommand\theequation{\thesection.\@arabic\c@equation}
\catcode`@=12

\makeatother
\usepackage[english]{babel}
\providecommand{\keywords}[1]
{
  \small	
  \textbf{\textit{Keywords---}} #1
}
\providecommand{\MSC}[1]
{
  \small	
  \textbf{\textit{Mathematics Subject Classification---}} #1
}
\usepackage{titlesec,authblk}


 \usepackage[hyperpageref]{backref}




\title{Degenerate Schr{\"o}dinger-Kirchhoff $(p, N)$-Laplacian problem With singular Trudinger-Moser nonlinearity in $\mathbb{R}^N$}
\date{}
\author{Deepak Kumar Mahanta, Tuhina Mukherjee$\thanks{Corresponding author}$ ~and Abhishek Sarkar  \\
        \small  Department of Mathematics, Indian Institute of Technology Jodhpur, Rajasthan 342030, India \\
        }
\newcommand{\Addresses}{{
  \bigskip
  \footnotesize

  D.K.~Mahanta, \textit{E-mail address:} \texttt{mahanta.1@iitj.ac.in}

  \medskip

  T.~Mukherjee, \textit{E-mail address:} \texttt{tuhina@iitj.ac.in}

  \medskip

  A.~Sarkar, \textit{E-mail address:} \texttt{abhisheks@iitj.ac.in}

}}

\begin{document}
\maketitle \vspace{-1.8\baselineskip}
\begin{abstract}
   In this paper, we deal with the existence of nontrivial nonnegative solutions for a $(p, N)$-Laplacian Schr{\"o}dinger-Kirchhoff problem in $\mathbb{R}^N$ with singular exponential nonlinearity. The main features of the paper are the $(p, N)$ growth of the elliptic operators, the double lack of compactness, and the fact that the Kirchhoff function is of degenerate type. To establish the existence results, we use the mountain pass theorem, the Ekeland variational principle, the singular Trudinger-Moser inequality, and a completely new Br\'ezis-Lieb type lemma for singular exponential nonlinearity.
\end{abstract}
\keywords{$(p, N)$-Laplacian, Schr{\"o}dinger-Kirchhoff equations,  Trudinger-Moser inequality, Ekeland variational principle, mountain pass theorem, exponential nonlinearity}\\
\noindent\MSC{35D30,35J60, 35J92}

\section{Introduction}
Given $N\geq 2$, we aim to investigate the existence of nontrivial nonnegative solutions for possibly degenerate  Schr{\"o}dinger-Kirchhoff $(p, N)$-Laplacian problem with singular Trudinger-Moser nonlinearity in $\mathbb{R}^N$. To be more specific, we will consider the following equation: 
\begin{equation}\label{main problem}
  \begin{cases}
  \mathcal{L}_{p,V}(u)+\mathcal{L}_{N,V}(u)=\lambda_1 h(x)|u|^{q-2}u+\lambda_2~\cfrac{g(x,u)}{|x|^\beta}~~\text{in}~~\mathbb{R}^N,\\  
 \displaystyle\int_{\mathbb{R}^N} V(x)\big(|u|^p+|u|^N\big)~\mathrm{d}x<\infty,~ u\in W^{1,p}(\mathbb{R}^N)\cap W^{1,N}(\mathbb{R}^N),
  \end{cases}\tag{$\boldsymbol{S}_{\lambda_1,\lambda_2}$}
\end{equation}
where $$\mathcal{L}_{r,V}(u)=M\big( \|u\|_{W^{1,r}_V}^r\big)\big[-\Delta_r u+V(x)|u|^{r-2}u\big]$$
with $$\|u\|_{W^{1,r}_V}=\bigg(\int_{\mathbb{R}^{N}}|\nabla u|^r~\mathrm{d}x+\int_{\mathbb{R}^{N}}V(x)|u|^r~\mathrm{d}x\bigg)^{\frac{1}{r}}~\text{for}~r\in\{p,N\}.$$ 
Also, we define $\Delta_r u=\text{div}(|\nabla u|^{r-2}\nabla u)$ with $r\in\{p,N\}$ as the usual $r$-Laplacian operator, $0\leq \beta<N$, $1<p,q<N<\infty$, $\lambda_1$ and $\lambda_2$ are two positive parameters, $h:\mathbb{R}^N\to (0,\infty)$ is a weight function in $L^\eta(\mathbb{R}^N)$ with $\eta=\frac{N}{N-q}$, $M:[0,\infty)\to [0,\infty)$ is a Kirchhoff function, and the scalar potential $V:\mathbb{R}^N\to \mathbb{R} $ is a continuous function. The nonlinearity $g:\mathbb{R}^N\times \mathbb{R}\to \mathbb{R} $ has critical exponential growth at infinity, i.e. there exists a positive constant $\alpha_0$ such that the following condition holds:
$$\lim_{|u|\to\infty} |g(x,u)| \text{exp}(-\alpha|u|^{\frac{N}{N-1}})=\begin{cases}
    0~~~~~\text{if}~\alpha>\alpha_0\\
    +\infty~\text{if}~\alpha<\alpha_0
\end{cases},~ \text{uniformly in} ~x\in \mathbb{R}^N.$$
Furthermore, we assume the following conditions on the Kirchhoff function $M$,  the scalar potential $V$ and the nonlinearity $g:\mathbb{R}^N\times \mathbb{R}\to \mathbb{R}$ :
\begin{itemize}
    \item[(M)] $M:[0,\infty)\to [0,\infty)$ is a degenerate Kirchhoff function, which is continuous and defined by $M(s)=as^k ~(a,k>0)$, with $q<p(k+1)$. Consequently, we define the primitive of $M$ as follows: $$\widehat{M}(t)=\int_{0}^{t}M(s)~\mathrm{d}s,~\forall~t>0.$$
    \item[(V1)] $V\in\mathcal{C}(\mathbb{R}^N,\mathbb{R})$ and there exists a constant $V_0>0$ such that $\inf_{x\in\mathbb{R}^N} V(x)\geq V_0$.
    \item[(V2)] $V(x)\to \infty$  as $|x|\to\infty$; or more generally, $\mathrm{meas}~\big(\{x\in \mathbb{R}^N :V(x)\leq c\}\big)<\infty,~\forall~ c>0$. The Lebesgue measure of $E\subset \mathbb{R}^N$ is denoted by $\mathrm{meas}(E)$. 
    \item[(G1)] $g$ is a Carath\'eodory function such that $g(x,u)=0$ for all $u\leq 0$ and for a.e. $x\in \mathbb{R}^N$. Moreover, there exist two constants $b_1,b_2>0$ and $0<\alpha_0<\alpha$ such that for all $(x,u)\in \mathbb{R}^N\times \mathbb{R}$, we have
    $$|g(x,u)|\leq b_1 |u|^{N(k+1)-1}+b_2 \Phi(\alpha_0 |u|^{N^\prime}),~\text{where}~\Phi(t)=\exp(t)-\sum_{j=0}^{N-2}\frac{t^j}{j!}~\text{and}~N^\prime=\frac{N}{N-1}.$$
    \item[(G2)] There exists $\mu>N(k+1)$ such that for all $x\in \mathbb{R}^N$ and $u>0$, we have
    $$0<\mu G(x,u)=\mu\int_{0}^{u}g(x,s)\mathrm{d}s\leq u g(x,u).$$
    \item[(G3)] The following hold
    $$\limsup_{t\to 0^+}\frac{N G(x,t)}{\widehat{M}(1)|t|^{N(k+1)}}< \mathcal{S}_{N,\beta} ~\text{uniformly for}~ x\in \mathbb{R}^N,~\text{where}~\mathcal{S}_{N,\beta}~\text{is defined in}~\eqref{eq2.1}.$$ 
    \item[(G4)] $\partial_u g$ is a Carath\'eodory function such that $\partial_u g(x,u)=0$ for all $u\leq 0$ and for a.e. $x\in \mathbb{R}^N$. Consequently, there exist two constants $b_1,b_2>0$ and $0<\alpha_0<\alpha$ such that for all $(x,u)\in \mathbb{R}^N\times \mathbb{R}$, we have
    $$|\partial_u g(x,u)u|\leq b_1 |u|^{N(k+1)-1}+b_2 \Phi(\alpha_0 |u|^{N^\prime}).$$
    \item[(G5)] There exists $\xi>N(k+1)$ and $C>0$ such that $$G(x,u)\geq \frac{2C}{\xi}u^\xi,~\text{for all}~(x,u)\in \mathbb{R}^N\times [0,\infty). $$
\end{itemize}

Recent literature is flooded with $(p,q)$ type or in general, double phase problems but the borderline case, when $q=N$ and $p<q$, is comparatively less looked upon, which is the key motivation towards this article.
It is worthwhile to mention that for $p<N$,  the  Sobolev embedding $W^{1,p}(\mathbb{R}^N)\hookrightarrow L^{p^\ast}(\mathbb{R}^N)$ holds, where $p^\ast=\frac{Np}{N-p}$ if $p<N$ and $p^\ast=+\infty$ if $p\geq N$. In this situation, the variational technique is used to solve most of the problems in which the nonlinearity exhibits polynomial growth at infinity. On the other hand, in the borderline case, i.e., $p=N$, the embedding $W^{1,N}(\mathbb{R}^N)\hookrightarrow L^{\theta}(\mathbb{R}^N)$ holds for any $N\leq \theta<\infty$; however $W^{1,N}(\mathbb{R}^N)$ is not contained in $ L^{\infty}(\mathbb{R}^N)$. Consequently, every polynomial growth is allowed in this instance. As a result, we look for a function $f:\mathbb{R}\to (0,\infty)$ with maximal growth satisfying the following condition:
\begin{align*}
\sup_{u\in W^{1,N}(\mathbb{R}^N), \|u\|_{W^{1,N}}\leq 1} ~\displaystyle\int_{\mathbb{R}^N}\cfrac{f(u)}{|x|^\beta}~\mathrm{d}x<+\infty,~\text{for all}~~0\leq\beta<N.    
\end{align*}
It has been seen by many authors that this maximal growth is of exponential type  ( see \cite{MR2669653, MR2788328, MR3042696, MR2838280, MR2198572}). Hence, there has been considerable interest paid to study $N$-Laplacian problems in the sense of Trudinger-Moser inequality.

In last few decades, the results of Trudinger-Moser have been studied extensively in several directions. In that context, we want to mention \cite{MR2669653, MR3836136, MR2863858} and references for singular Trudinger-Moser inequality in $\mathbb{R}^N$. On the other hand, one can see to \cite{do1997n, MR2488689, MR3145759, MR2400264} and references therein for sharp Trudinger-Moser inequality in $\mathbb{R}^N$. Further, we refer the interested readers to study \cite{MR2980499, MR3130507, MR3797740} for sharp Trudinger-Moser inequalities in the Heisenberg group.
We also refer to the articles \cite{MR1230930, MR1989228} as applications of Trudinger-Moser inequalities to the study of extremal problems for determinants and zeta functions under conformal deformation metric and in the theory of conformal geometry and geometric analysis.

In a mathematical point of view, elliptic problems driven by $(p, N)$-Laplacian operator have several technical difficulties due to the fact that this operator is not homogeneous, the double lack of compactness and the Palais-Smale sequences do not have the compactness property. In order to overcome the lack of compactness which often occurs in the problems associated with the unbounded domain we assume some hypothesis on the potential. To the best of our knowledge, in the local framework, there are very few contributions in the literature involving $(p, N)$-Laplacian operator with Trudinger-Moser nonlinearity in $\mathbb{R}^N$. For instance, the authors of \cite{MR4258779} established the existence of nonnegative solutions for a $(p, N)$-Laplacian equation with critical exponential nonlinearities in $\mathbb{R}^N$ via mountain pass theorem and Ekeland variational principle. In \cite{MR4149293}, the authors studied the existence of at least one nonnegative solution for a coupled exponential system in the entire space driven by a $(p, N)$-Laplacian operator via suitable variational and topological methods. By applying variational technique, the authors of \cite{MR4289110} proved a completely new Trudinger-Moser type inequality. Then, they obtained a nonzero weak solution for the $(N, q)$-Laplacian equation with critical exponential nonlinearity in $\mathbb{R}^N$. Moreover, the problems driven by the $(p, N)$-Laplacian operator on bounded domains with critical exponential growth in the sense of Trudinger-Moser can be seen in the papers \cite{MR3483063, MR3896668, MR4365176}.

In recent years, it is worth mentioning that the study of Kirchhoff-type problems, which was first proposed by Kirchhoff in \cite{kirchhoff1883vorlesungen}, has been paid considerable attention by many authors due to its application in various models of physical and biological systems. A typical example of the Kirchhoff function $M$ is given by $M(t)=a+bt^k$ for $t\geq0,k>0$, where $a,b\geq 0$ and $a+b>0$. The problem involving Kirchhoff function $M$ is called degenerate if $a=0$ while it is called non-degenerate if $a>0$. Subsequently, it is important to note that the degenerate case in Kirchhoff's theory is more interesting and significant than the non-degenerate case. However, we would like to mention that for both local and nonlocal frameworks, most of the literature deals with non-degenerate Kirchhoff problems with Trudinger-Moser nonlinearities in $\mathbb{R}^N$, it can be found in  \cite{MR3987388, MR4334134, MR4622428, MR3735478, MR4487653} and for degenerate type Kirchhoff problem with critical exponential growth in $\mathbb{R}^N$ there are few articles (see \cite{MR3316612, MR4596285}). 

The main novelty of our paper is the presence of two Kirchhoff coefficients and the fact that the Kirchhoff function is of a degenerate type. Due to the presence of this degenerate Kirchhoff term $M$ in \eqref{main problem}, it is not trivial to show that the weak limit of a bounded sequence in the desired function space  $\mathbf{X}$, which is defined in \eqref{eq1}, is a solution to \eqref{main problem}. To overcome this difficulty, by adapting the idea of the proof of Lemma.$6$ in \cite{MR4411714}, we first provide a compactness result for the map $\mathscr{L}^{\prime}: \mathbf{X}\to \mathbf{X}^\ast$ (see Lemma \ref{lem2.12}), where $\mathbf{X}^\ast$ denotes the continuous dual of $\mathbf{X}$.

Furthermore, no results for degenerate Kirchhoff problems driven by $(p, N)$-Laplacian operator with singular exponential nonlinearity in $\mathbb{R}^N$ appear in the current literature. Motivated by this fact and the results obtained by Fiscella and Pucci in \cite{MR4258779}, we study the existence of solutions to a degenerate $(p, N)$-Laplacian Schr{\"o}dinger- Kirchhoff problem with singular exponential growth in $\mathbb{R}^N$.

Let $r\in(1,\infty)$ and $L^r(\mathbb{R}^N)$ denotes the standard Lebesgue space w.r.t the norm $\|\cdot\|_r$. Further, if $\Omega\subset \mathbb{R}^N$, then we define the norm by $\|\cdot\|_{L^r(\Omega)}$. Now for nonnegative measurable function $V:\mathbb{R}^N\to \mathbb{R}$, the space $L^r_V(\mathbb{R}^N)$, consisting of all real-valued measurable functions, with $V(x)|u|^r\in L^1(\mathbb{R}^N)$, equipped with the seminorm
$$ \|u\|_{r,V}=\Bigg(\int_{\mathbb{R}^N} V(x)|u|^r~\mathrm{d}x\Bigg)^{\frac{1}{r}},~\text{for all}~u\in L^r_V(\mathbb{R}^N), $$
which is a norm if $V(x)>0$ a.e. $x\in \mathbb{R}^N$. The space $\big(L^r_V(\mathbb{R}^N),\|\cdot\|_{r,V}\big)$ is a  separable and uniform convex Banach space (see \cite{MR3412392}), thanks to $(V1)$. Moreover, we note that the embedding $L^r_V(\mathbb{R}^N)\hookrightarrow L^r(\mathbb{R}^N)$ is continuous.

For $0\leq\beta<N$, we define the space $L^r(\mathbb{R}^N,|x|^{-\beta})$, consisting of all real-valued measurable functions, with $\frac{|u|^r}{|x|^{\beta}}\in L^1(\mathbb{R}^N)$, equipped with the norm
$$ \|u\|_{L^r(\mathbb{R}^N,|x|^{-\beta})}=\Bigg(\int_{\mathbb{R}^N} \frac{|u|^r}{|x|^{\beta}}~\mathrm{d}x\Bigg)^{\frac{1}{r}},~\text{for all}~u\in L^r(\mathbb{R}^N,|x|^{-\beta}).$$ 

Denote $$ W^{1,r}(\mathbb{R}^N)=\big\{ u\in L^{r}(\mathbb{R}^N):|\nabla u|\in L^{r}(\mathbb{R}^N)\big\}, $$
endowed with the norm
$$\|u\|_{W^{1,r}}=\big(\|\nabla u\|_r^r+\|u\|_{r}^r\big)^{\frac{1}{r}}.$$
It is well-known that the space $\big(W^{1,r}(\mathbb{R}^N),\|\cdot\|_{W^{1,r}}\big)$ is a separable and uniformly convex Banach space. Note that, the space of smooth function with compact support which is denoted by $\mathcal{C}_0^\infty(R^N)$ is a dense subset of $W^{1,r}(\mathbb{R}^N)$.

Under the assumption of $(V1)$, the weighted Sobolev space $W^{1,r}_V(\mathbb{R}^N)$ make sense and defined by
$$W^{1,r}_V(\mathbb{R}^N)=\bigg\{ u\in W^{1,r}(\mathbb{R}^N):\int_{\mathbb{R}^N}V(x)|u|^r~\mathrm{d}x<\infty\bigg\}, $$
endowed with the norm
$$\|u\|_{W^{1,r}_V}=\bigg(\|\nabla u\|_r^r+\|u\|_{r,V}^r\bigg)^{\frac{1}{r}}.$$
The space $\big(W^{1,r}_V(\mathbb{R}^N),\|\cdot\|_{W^{1,r}_V}\big)$ is a separable and uniformly convex Banach space (see Proposition 2.1 in \cite{MR3454625}). Moreover, $\mathcal{C}_0^\infty(R^N)$ is a dense subset of $W^{1,r}_V(\mathbb{R}^N)$ (see \cite{MR3483510,MR3454625}). From now onward, we take into account the Banach space
\begin{equation}\label{eq1}
 \mathbf{X}=W^{1,p}_V(\mathbb{R}^N)\cap W^{1,N}_V (\mathbb{R}^N)   
\end{equation}
as our working space and endowed with the norm
$ \|u\|_{\mathbf{X}}=\|u\|_{W^{1,p}_V}+\|u\|_{W^{1,N}_V}. $
It is easy to see that the function space $(\mathbf{X},\|\cdot\|_{\mathbf{X}})$ is a reflexive and separable Banach space, thanks to $(V1)$, Proposition 2.1 in \cite{MR3454625} and Corollary 1.8 in \cite{MR0273370}.
\begin{definition}\label{definition}
We say $u\in\mathbf{X}$ is a (weak) solution of \eqref{main problem}, if
 $$ M\big( \|u\|_{W^{1,p}_V}^p\big)\big\langle{u,\psi}\big\rangle_{p,V}+M\big( \|u\|_{W^{1,N}_V}^N\big)\big\langle{u,\psi}\big\rangle_{N,V}= \lambda_1\int_{\mathbb{R}^N}h(x)|u|^{q-2}u\psi~\mathrm{d}x+\lambda_2\int_{\mathbb{R}^N}\cfrac{g(x,u)}{|x|^\beta}\psi~\mathrm{d}x $$ 
for all $\psi\in\mathbf{X} $, where  $\big\langle{u,\psi}\big\rangle_{r,V}$ for $r\in\{p,N\}$ is defined as follows:
\begin{align*}
\big\langle{u,\psi}\big\rangle_{r,V}=\int_{\mathbb{R}^N}|\nabla u|^{r-2}\nabla u.\nabla\psi~\mathrm{d}x+\int_{\mathbb{R}^N}V(x)|u|^{r-2}u\psi~\mathrm{d}x,~\text{for all}~u,\psi\in \mathbf{X} .   
\end{align*} 
\end{definition}

Now, we can state the existence results of (weak) solutions for \eqref{main problem}.

\begin{theorem}\label{thm1.1}
 Let $1<p,q<N<\infty$ and $h$ is in $L^\eta(\mathbb{R}^N)$ with $\eta=\frac{N}{N-q}$. Suppose $(M)$, $(V1)$, $(V2)$, $(G1)$-$(G3)$ are hold, then there exists $\widehat{\lambda}_1>0$ such that \eqref{main problem} admits at least one nontrivial nonnegative solution $u_{\lambda_1,\lambda_2}$ in $\mathbf{X}$ for all $\lambda_1\in (0,\widehat{\lambda}_1)$ and for all $\lambda_2>0$. Furthermore, $$\lim_{\lambda_1\to 0^+}\|u_{\lambda_1,\lambda_2}\|_{\mathbf{X}}=0. $$
\end{theorem}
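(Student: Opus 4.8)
The plan is to recast \eqref{main problem} variationally and to produce the solution as a negative-energy local minimizer obtained through Ekeland's variational principle, which is precisely the type of solution whose norm collapses to $0$ as $\lambda_1\to0^+$. First I would introduce the energy functional $J_{\lambda_1,\lambda_2}:\mathbf{X}\to\mathbb{R}$,
$$J_{\lambda_1,\lambda_2}(u)=\frac{1}{p}\widehat{M}\big(\|u\|_{W^{1,p}_V}^p\big)+\frac{1}{N}\widehat{M}\big(\|u\|_{W^{1,N}_V}^N\big)-\frac{\lambda_1}{q}\int_{\mathbb{R}^N}h(x)(u^+)^q\,\mathrm{d}x-\lambda_2\int_{\mathbb{R}^N}\frac{G(x,u)}{|x|^\beta}\,\mathrm{d}x,$$
where using $(u^+)^q$ in the subcritical term makes every critical point automatically nonnegative: testing the Euler--Lagrange identity of Definition \ref{definition} with $u^-$ annihilates the right-hand side, since $g(x,\cdot)\equiv0$ on $(-\infty,0]$ by $(G1)$, while $\langle u,u^-\rangle_{r,V}=\|u^-\|_{W^{1,r}_V}^r$ forces $u^-\equiv0$. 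That $J_{\lambda_1,\lambda_2}\in\mathcal{C}^1(\mathbf{X})$ with derivative matching Definition \ref{definition} follows from $(G1)$, $(G4)$ and the singular Trudinger--Moser inequality controlling $\int_{\mathbb{R}^N}|x|^{-\beta}\Phi(\alpha_0|u|^{N^\prime})\,\mathrm{d}x$, while the $h$-term is handled by Hölder with the conjugate pair $\eta,\eta^\prime$, giving $\int_{\mathbb{R}^N} h(u^+)^q\le\|h\|_\eta\|u\|_N^q$ because $q\eta^\prime=N$.

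For the local geometry I would fix a radius $\rho>0$ small enough that every $u$ with $\|u\|_{\mathbf{X}}\le\rho$ keeps $\alpha_0\|u\|_{W^{1,N}_V}^{N^\prime}$ strictly below the critical Trudinger--Moser threshold, so that $\int_{\mathbb{R}^N}|x|^{-\beta}\Phi(\alpha_0|u|^{N^\prime})\,\mathrm{d}x$ is uniformly bounded on $\overline{B}_\rho$. Since $(M)$ gives $\widehat{M}(t)=\frac{a}{k+1}t^{k+1}$, the leading terms are of orders $\|u\|^{p(k+1)}$ and $\|u\|^{N(k+1)}$, and $(G1)$, $(G3)$ bound $G$ above by $C|u|^{N(k+1)}+b_2|u|\,\Phi(\alpha_0|u|^{N^\prime})$; hence on $\overline{B}_\rho$ one gets $J_{\lambda_1,\lambda_2}(u)\ge c_1\|u\|^{p(k+1)}-\lambda_1c_2\|u\|^{q}-\lambda_2c_3\|u\|^{N(k+1)}$, which is bounded below, so $c:=\inf_{\overline{B}_\rho}J_{\lambda_1,\lambda_2}>-\infty$. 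Because $q<p(k+1)$, evaluating along $tu_0$ for a fixed $u_0\ge0$, $u_0\not\equiv0$, gives $J_{\lambda_1,\lambda_2}(tu_0)\le-\tfrac{\lambda_1 t^q}{2q}\int_{\mathbb{R}^N} h u_0^q<0$ for small $t$, so $c<0=J_{\lambda_1,\lambda_2}(0)$. A sphere estimate then shows that for all $\lambda_1$ below a threshold $\widehat\lambda_1$ the infimum of $J_{\lambda_1,\lambda_2}$ over $\partial B_\rho$ stays strictly above $c$; thus the infimum is interior, and Ekeland's variational principle furnishes a Palais--Smale sequence $(u_n)\subset\overline{B}_\rho$ with $J_{\lambda_1,\lambda_2}(u_n)\to c<0$ and $J_{\lambda_1,\lambda_2}^\prime(u_n)\to0$ in $\mathbf{X}^\ast$.

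The convergence step is next. The sequence is bounded, so up to a subsequence $u_n\rightharpoonup u_0$ in $\mathbf{X}$; by $(V2)$ the embedding $\mathbf{X}\hookrightarrow L^s(\mathbb{R}^N)$ is compact in the relevant range, which settles the $h$-term and the polynomial part of $g$. The singular exponential part is the delicate one, and here the slack built into the choice of $\rho$ keeps $\int_{\mathbb{R}^N}|x|^{-\beta}\Phi(\alpha_0|u_n|^{N^\prime})\,\mathrm{d}x$ uniformly bounded with a margin, so a higher-integrability/Vitali argument combined with the Br\'ezis--Lieb type lemma for the singular exponential nonlinearity announced in the introduction lets me pass to the limit in $\int_{\mathbb{R}^N}|x|^{-\beta}g(x,u_n)\psi\,\mathrm{d}x$. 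For the operator part I invoke Lemma \ref{lem2.12}: the compactness of $\mathscr{L}^\prime:\mathbf{X}\to\mathbf{X}^\ast$ promotes weak convergence to strong and simultaneously controls the degenerate Kirchhoff factors $M(\|u_n\|_{W^{1,r}_V}^r)$, whose limits might otherwise degenerate to $0$. Together these identify $u_0$ as a weak solution of \eqref{main problem}; it is nonnegative by the truncation above, and $J_{\lambda_1,\lambda_2}(u_0)=c<0$ forces $u_0\not\equiv0$.

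Finally, for the asymptotics I would reinsert $u_0$ into the lower bound of the previous paragraph: together with $J_{\lambda_1,\lambda_2}(u_0)<0$ this yields $c_1\|u_0\|^{p(k+1)}\le\lambda_1c_2\|u_0\|^{q}+\lambda_2c_3\|u_0\|^{N(k+1)}$, and since on $\overline{B}_\rho$ the last term is absorbed by the first, $\|u_0\|^{p(k+1)-q}\lesssim\lambda_1$; as $p(k+1)>q$ this gives $\|u_{\lambda_1,\lambda_2}\|_{\mathbf{X}}\to0$ when $\lambda_1\to0^+$. I expect the main obstacle to be the double loss of compactness: the domain is all of $\mathbb{R}^N$ and the nonlinearity is of critical exponential type, so neither the embedding into the exponential Orlicz class nor a global Palais--Smale condition is available. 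The remedy is structural: confining the analysis to the small ball $\overline{B}_\rho$, where remaining strictly below the singular Trudinger--Moser level $\mathcal{S}_{N,\beta}$ restores compactness, and relying on Lemma \ref{lem2.12} to overcome the degeneracy of $M$ when identifying the weak limit as a genuine solution.
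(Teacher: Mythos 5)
Your overall route (restrict to a small ball, show the infimum there is negative and finite, keep it below the infimum on the sphere for $\lambda_1$ small, run Ekeland to get a Palais--Smale sequence, use the Trudinger--Moser control to pass to the limit) is the same as the paper's, but there is a genuine gap at the decisive step, namely the treatment of the degenerate Kirchhoff coefficients. From $J^\prime_{\lambda_1,\lambda_2}(u_n)\to 0$ one only obtains, after disposing of the $h$-term and the singular term, the identity
\begin{equation*}
M(l_p^p)\lim_{n\to\infty}\big\langle u_n,u_n-u\big\rangle_{p,V}+M(l_N^N)\lim_{n\to\infty}\big\langle u_n,u_n-u\big\rangle_{N,V}=0,
\qquad l_r=\lim_{n\to\infty}\|u_n\|_{W^{1,r}_V},
\end{equation*}
which is the paper's \eqref{eq3.20}. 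Lemma \ref{lem2.12} is an $(S)_+$ property of $\mathscr{L}^\prime$; it can only be invoked \emph{after} one knows $M(l_p^p)>0$ and $M(l_N^N)>0$, because only then does the displayed identity yield $\limsup_n\langle\mathscr{L}^\prime(u_n)-\mathscr{L}^\prime(u),u_n-u\rangle\le 0$. It does not, as you assert, ``simultaneously control the degenerate Kirchhoff factors'': since $M(0)=0$, the cases $l_p=0$ or $l_N=0$ make the identity vacuous in the corresponding component, and without strong (or at least norm) convergence you also cannot identify the limiting coefficients $M(\|u\|^r_{W^{1,r}_V})$ in the weak formulation. The paper excludes these degenerate cases by a separate argument (Cases 1--3 in the proof of Theorem \ref{thm1.1}): $l_p=l_N=0$ contradicts $m_{\lambda_1,\lambda_2}<0$; and if, say, $l_p\neq 0$, $l_N=0$, then Simon's inequality upgrades \eqref{eq3.20} to strong convergence $u_n\to u$ in $W^{1,p}_V(\mathbb{R}^N)$, forcing $\|u\|_{W^{1,p}_V}=l_p>0$, while $l_N=0$ forces $u=0$ a.e., a contradiction. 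Your proposal contains no mechanism of this kind, and this is precisely the difficulty the paper singles out as its main novelty.

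Two secondary problems. First, you invoke $(G4)$ (for the $\mathcal{C}^1$ property) and the Br\'ezis--Lieb type lemma for the singular nonlinearity; but Theorem \ref{thm1.1} assumes only $(G1)$--$(G3)$, and the Br\'ezis--Lieb lemma (Lemma \ref{lem4.3}) requires $(G4)$ --- it is used only for Theorem \ref{thm1.2}. The paper avoids it here: it shows directly that $\int_{\mathbb{R}^N}|x|^{-\beta}g(x,u_n)(u_n-u_{\lambda_1,\lambda_2})\,\mathrm{d}x\to 0$ by H\"older, Corollary \ref{cor2.9}, the compact embedding of Lemma \ref{lem2.8}, and Lemma \ref{lem2.11}, once $\limsup_n\|u_n\|^{N^\prime}_{W^{1,N}}$ is below the singular Moser threshold (the paper secures this via the extra threshold $\lambda_1^0$ in \eqref{eq3.14}; your alternative of building the margin into $\rho$ is viable but must be stated with the unweighted $W^{1,N}$ norm and the constant $\min\{1,V_0\}$ tracked). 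Second, your coercivity estimate $J_{\lambda_1,\lambda_2}(u)\ge c_1\|u\|_{\mathbf{X}}^{p(k+1)}-\lambda_1c_2\|u\|_{\mathbf{X}}^q-\lambda_2c_3\|u\|_{\mathbf{X}}^{N(k+1)}$ is false: for $u$ whose norm sits mostly in the $W^{1,N}_V$ component, $\|u\|^{p(k+1)}_{W^{1,p}_V}+\|u\|^{N(k+1)}_{W^{1,N}_V}$ is of order $\|u\|_{\mathbf{X}}^{N(k+1)}$, not $\|u\|_{\mathbf{X}}^{p(k+1)}$; the correct small-ball bound (the paper's \eqref{eq3.6}) has leading exponent $N(k+1)$. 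Your final asymptotic argument survives this correction (it gives $\|u_{\lambda_1,\lambda_2}\|_{\mathbf{X}}^{N(k+1)-q}\lesssim\lambda_1$, and $N(k+1)>q$), but as written both the sphere estimate and the limit $\lambda_1\to 0^+$ rest on an invalid inequality.
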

\begin{theorem}\label{thm1.2}
Suppose that all the assumptions of Theorem \ref{thm1.1} hold. In addition, if $(G4)$ and $(G5)$ are satisfied, then there exists $\lambda_2^\ast>0$ such that for all $\lambda_2>\lambda_2^\ast$ there exists $\widetilde{\lambda}_1=\widetilde{\lambda}_1(\lambda_2)>0$ with the property that \eqref{main problem} admits a nontrivial nonnegative solution $v_{\lambda_1,\lambda_2}$ for all $\lambda_1\in (0,\widetilde{\lambda}_1]$. Consequently, if $\lambda_1<\textit{min}\{\widehat{\lambda}_1,\widetilde{\lambda}_1\}$, then $v_{\lambda_1,\lambda_2}$ is a second independent solution of \eqref{main problem} which is independent of $u_{\lambda_1,\lambda_2}$ constructed in Theorem \ref{thm1.1}.  
\end{theorem}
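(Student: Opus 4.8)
The plan is to realize the second solution $v_{\lambda_1,\lambda_2}$ as a mountain pass critical point of the energy functional
$$J_{\lambda_1,\lambda_2}(u)=\frac{1}{p}\widehat{M}\big(\|u\|_{W^{1,p}_V}^p\big)+\frac{1}{N}\widehat{M}\big(\|u\|_{W^{1,N}_V}^N\big)-\frac{\lambda_1}{q}\int_{\mathbb{R}^N}h(x)|u|^q\,\mathrm{d}x-\lambda_2\int_{\mathbb{R}^N}\frac{G(x,u)}{|x|^\beta}\,\mathrm{d}x$$
associated with \eqref{main problem}, lying strictly above the local minimizer $u_{\lambda_1,\lambda_2}$ furnished by Theorem \ref{thm1.1}. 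First I would record that, under $(M)$, $(G1)$ and $(G4)$, the functional $J_{\lambda_1,\lambda_2}$ belongs to $C^1(\mathbf{X},\mathbb{R})$ --- the singular exponential term being differentiated through the growth bounds $(G1)$, $(G4)$ together with the singular Trudinger--Moser inequality of \eqref{eq2.1} --- and that its critical points are precisely the weak solutions of Definition \ref{definition}.

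Next I would set up the mountain pass geometry \emph{around} the first solution. Since $M(\|u_{\lambda_1,\lambda_2}\|_{W^{1,r}_V}^r)=a\|u_{\lambda_1,\lambda_2}\|_{W^{1,r}_V}^{rk}>0$ at the nontrivial point $u_{\lambda_1,\lambda_2}$, the Kirchhoff terms are nondegenerate there; combined with $\|u_{\lambda_1,\lambda_2}\|_{\mathbf{X}}\to0$ as $\lambda_1\to0^+$, this allows one to show that $u_{\lambda_1,\lambda_2}$ is a \emph{strict} local minimum for all $\lambda_1$ below some $\widetilde\lambda_1=\widetilde\lambda_1(\lambda_2)$, so that there exist $\rho,\delta>0$ with $J_{\lambda_1,\lambda_2}(u)\geq J_{\lambda_1,\lambda_2}(u_{\lambda_1,\lambda_2})+\delta$ whenever $\|u-u_{\lambda_1,\lambda_2}\|_{\mathbf{X}}=\rho$. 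For the far end I use $(G5)$: fixing a nonnegative $w\in\mathbf{X}\setminus\{0\}$, the bound $G(x,tw)\geq\frac{2C}{\xi}t^\xi w^\xi$ with $\xi>N(k+1)$ dominates the Kirchhoff growth $t^{N(k+1)}$, so $J_{\lambda_1,\lambda_2}(tw)\to-\infty$ and one may choose $e=t_0w$ with $\|e-u_{\lambda_1,\lambda_2}\|_{\mathbf{X}}>\rho$ and $J_{\lambda_1,\lambda_2}(e)<J_{\lambda_1,\lambda_2}(u_{\lambda_1,\lambda_2})$. The mountain pass value
$$c_{\lambda_1,\lambda_2}=\inf_{\gamma\in\Gamma}\max_{s\in[0,1]}J_{\lambda_1,\lambda_2}(\gamma(s)),\qquad \Gamma=\{\gamma\in C([0,1],\mathbf{X}):\gamma(0)=u_{\lambda_1,\lambda_2},\ \gamma(1)=e\},$$
then satisfies $c_{\lambda_1,\lambda_2}\geq J_{\lambda_1,\lambda_2}(u_{\lambda_1,\lambda_2})+\delta$.

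The decisive and hardest step is the quantitative level estimate: I must show that $c_{\lambda_1,\lambda_2}$ stays strictly below the critical Trudinger--Moser compactness threshold $\Theta$ attached to $\mathcal{S}_{N,\beta}$, $\alpha_0$ and the Kirchhoff exponent $N(k+1)$, beneath which a Palais--Smale sequence cannot concentrate. Here the largeness of $\lambda_2$ is used decisively: estimating $\max_s J_{\lambda_1,\lambda_2}(\gamma_0(s))$ along an explicit path $\gamma_0$ through $u_{\lambda_1,\lambda_2}$ and $e$, the lower bound $(G5)$ contributes $-\lambda_2\frac{2C}{\xi}t^\xi\int_{\mathbb{R}^N}\frac{w^\xi}{|x|^\beta}\,\mathrm{d}x$, which drives the maximal height down proportionally to $\lambda_2$; choosing $\lambda_2>\lambda_2^\ast$ forces $c_{\lambda_1,\lambda_2}<\Theta$, while $\lambda_1\leq\widetilde\lambda_1$ keeps the subcritical $h$--contribution from spoiling the bound. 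I expect this estimate to be the principal obstacle, since it must be reconciled with the degenerate scaling $N(k+1)$ and made effective in both parameters at once.

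With the level controlled, I would verify $(PS)_{c_{\lambda_1,\lambda_2}}$. Boundedness of a Palais--Smale sequence $(u_n)$ follows from the Ambrosetti--Rabinowitz condition $(G2)$ with $\mu>N(k+1)$, by bounding $J_{\lambda_1,\lambda_2}(u_n)-\frac{1}{\mu}\langle J_{\lambda_1,\lambda_2}'(u_n),u_n\rangle$ from below. Passing to a weak limit $u_n\rightharpoonup v$ in $\mathbf{X}$, the weighted $q$--term converges by the compactness granted by $(V2)$, whereas the singular exponential quantities $\int_{\mathbb{R}^N} g(x,u_n)u_n|x|^{-\beta}\,\mathrm{d}x$ and $\int_{\mathbb{R}^N} G(x,u_n)|x|^{-\beta}\,\mathrm{d}x$ are passed to the limit by the new Br\'ezis--Lieb type lemma for singular exponential nonlinearity, whose hypotheses are met precisely because $c_{\lambda_1,\lambda_2}<\Theta$ bounds $\limsup_n\|u_n\|_{W^{1,N}_V}$ below the Trudinger--Moser radius while $(G1)$, $(G4)$ supply the requisite equi-integrability. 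Strong convergence of the $(p,N)$--Laplacian part is then delivered by the compactness of $\mathscr{L}'$ established in Lemma \ref{lem2.12}, yielding $u_n\to v$ in $\mathbf{X}$ and hence a critical point $v_{\lambda_1,\lambda_2}$ at level $c_{\lambda_1,\lambda_2}$. Testing the equation with the negative part $v_{\lambda_1,\lambda_2}^-$ and invoking $g(x,\cdot)\equiv0$ on $(-\infty,0]$ from $(G1)$ gives nonnegativity; finally, since $J_{\lambda_1,\lambda_2}(v_{\lambda_1,\lambda_2})=c_{\lambda_1,\lambda_2}>J_{\lambda_1,\lambda_2}(u_{\lambda_1,\lambda_2})$ whenever $\lambda_1<\min\{\widehat\lambda_1,\widetilde\lambda_1\}$, the solution $v_{\lambda_1,\lambda_2}$ is nontrivial and independent of $u_{\lambda_1,\lambda_2}$.
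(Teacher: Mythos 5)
Your overall architecture coincides with the paper's: a mountain pass level driven below a Trudinger--Moser compactness threshold by taking $\lambda_2$ large through $(G5)$ (the paper's Lemma \ref{lem4.6}), Palais--Smale boundedness from the Ambrosetti--Rabinowitz condition $(G2)$ (Lemma \ref{lem4.1}), and compactness via the singular Br\'ezis--Lieb lemma (Lemma \ref{lem4.3}). However, two of your steps contain genuine gaps. The first is the base point of the pass. You propose to show that $u_{\lambda_1,\lambda_2}$ is a \emph{strict} local minimum, i.e.\ that there exist $\rho,\delta>0$ with $J_{\lambda_1,\lambda_2}(u)\geq J_{\lambda_1,\lambda_2}(u_{\lambda_1,\lambda_2})+\delta$ whenever $\|u-u_{\lambda_1,\lambda_2}\|_{\mathbf{X}}=\rho$, justifying this only by the nondegeneracy of $M$ at the nontrivial point and the smallness of $\|u_{\lambda_1,\lambda_2}\|_{\mathbf{X}}$. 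This does not follow: the Ekeland argument of Theorem \ref{thm1.1} produces a local minimizer, not a strict one, and nothing excludes other minimizers at the level $m_{\lambda_1,\lambda_2}$ arbitrarily close to $u_{\lambda_1,\lambda_2}$, so no uniform gap $\delta$ on a sphere centered at $u_{\lambda_1,\lambda_2}$ is provable from what is known. The step is also unnecessary: the paper keeps the pass based at the origin, where Lemma \ref{lem3.3} already gives $J_{\lambda_1,\lambda_2}\geq\kappa>0$ on the sphere $\partial B_\rho$ centered at $0$, while $J_{\lambda_1,\lambda_2}(0)=0$ and $J_{\lambda_1,\lambda_2}(e)<0$ with $\|e\|_{\mathbf{X}}\geq 2>\rho$; every admissible path crosses $\partial B_\rho$, so $c_{\lambda_1,\lambda_2}\geq\kappa>0$, and the two solutions are distinguished purely by their energies, $J_{\lambda_1,\lambda_2}(u_{\lambda_1,\lambda_2})=m_{\lambda_1,\lambda_2}<0<c_{\lambda_1,\lambda_2}=J_{\lambda_1,\lambda_2}(v_{\lambda_1,\lambda_2})$, with no strictness of the minimum ever needed.

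The second gap is that your compactness step ignores the degeneracy of $M$, which the paper singles out as its central difficulty. Writing $l_r=\lim_n\|u_n\|_{W^{1,r}_V}$ for $r\in\{p,N\}$, all limit identities obtained from a Palais--Smale sequence carry the coefficients $M(l_p^p)$, $M(l_N^N)$; since $M(0)=0$, these coefficients may vanish, and then neither your appeal to the $(S)_+$ property of Lemma \ref{lem2.12} nor any direct norm-convergence conclusion can be extracted from them. The paper must exclude the possibilities $l_p=0$ or $l_N=0$ by separate contradiction arguments (Cases 1--2 in the proof of Lemma \ref{lem4.5}) before concluding strong convergence in the remaining case. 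In addition, the Br\'ezis--Lieb lemma you invoke has as hypotheses that $\nabla u_n\to\nabla v$ a.e.\ in $\mathbb{R}^N$ --- which does not follow from weak convergence alone and is proved separately in the paper (Lemma \ref{lem4.4}, via a cutoff and Simon's inequality argument) --- and the bound \eqref{eq4.8}, which carries an extra factor $2^{N^\prime}$ in the denominator; this is exactly why the admissible levels must lie below the stronger threshold \eqref{eq4.18} rather than the weaker one sufficient for Lemma \ref{lem4.2}. These are missing ingredients, not routine details, and your unspecified threshold $\Theta$ together with the bare citation of Lemma \ref{lem2.12} does not supply them.
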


The outline of the paper is organized as follows. In Section \ref{sec2}, we present the variational structure of \eqref{main problem} and some preliminary results useful for the next main sections. Section \ref{sec3} is devoted to proving Theorem \ref{thm1.1} via the Ekeland variational principle and using topological tools. Finally, in Section \ref{sec4}, we establish the key compactness lemma, particularly helpful to apply the mountain pass theorem at a suitable mountain pass level and to prove Theorems \ref{thm1.2}.\\ 
\textbf{Notations.} Throughout this paper, we use the following notations:
\begin{itemize}
    \item [$\bullet$] $r^{\prime}=\frac{r}{r-1}$ is the conjugate exponent of $r>1$, that is, $\frac{1}{r}+\frac{1}{r^\prime}=1$.
    \item [$\bullet$] $o_n(1)$ denotes $o_n(1)\to 0$ as $n\to\infty$.
    \item [$\bullet$] $C_1,C_2,C_3,\dots$ denote some fixed constants possibly different in different places.
    \item [$\bullet$] $\rightharpoonup $ means weak convergence and $\rightarrow$ means strong convergence.
    \item [$\bullet$] $u^+=$ max~$\{u,0\}$ and $u^-=$ max~$\{-u,0\}$.
    \item [$\bullet$] $B_R=\{u\in \mathbf{X}:\|u\|_{\mathbf{X}}<R\}$, $\overline{B}_R=\{u\in \mathbf{X}:\|u\|_{\mathbf{X}}\leq R\}$, $\partial B_R=\{u\in \mathbf{X}:\|u\|_{\mathbf{X}}=R\}$.
    \item [$\bullet$] $\mathcal{S}_{r,\beta},~ \mathcal{S}_{r,h}$ and $\mathcal{S}_{r}$ are the best constants in the embeddings from $\mathbf{X}$ into $L^r(\mathbb{R}^N,|x|^{-\beta})$, $L^r_h(\mathbb{R}^N) $ and $L^r(\mathbb{R}^N)$ respectively.
    \item [$\bullet$] $\mathbb{R}^+=(0,\infty) $ and $\mathbb{R}_0^+=[0,\infty) $.
\end{itemize}

\section{Some Preliminary Results}\label{sec2}
In this section, we first provide some technical lemmas which will be used to prove our main results in this paper. The following first four embeddings are direct consequence of the classical embedding results in Sobolev space theory.
\begin{lemma}\label{lem2.1}
Let $(V1)$ holds. If $q\in[p,p^\ast]$, then the following embeddings are continuous $$ W^{1,p}_V(\mathbb{R}^N)\hookrightarrow W^{1,p}(\mathbb{R}^N) \hookrightarrow L^q (\mathbb{R}^N)$$ with $\text{min} \{1,V_0\} \|u\|^p_{W^{1,p}}\leq \|u\|^p_{W^{1,p}_V}$, for all $u\in W^{1,p}_V(\mathbb{R}^N)$. 
Furthermore, the embedding $W^{1,p}_V(\mathbb{R}^N)\hookrightarrow L^q _{\text{loc}}(\mathbb{R}^N)$ is compact for any $q\in[1,p^\ast)$.
\end{lemma}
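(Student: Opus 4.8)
The plan is to reduce everything to three standard facts: the defining inequality $V(x)\geq V_0$ from $(V1)$, the Gagliardo-Nirenberg-Sobolev inequality, and the Rellich-Kondrachov theorem. First I would establish the norm comparison, which simultaneously yields the first continuous embedding. Since $(V1)$ gives $V(x)\ge V_0$ a.e., we have $\int_{\mathbb{R}^N} V(x)|u|^p\,\mathrm{d}x \ge V_0\int_{\mathbb{R}^N}|u|^p\,\mathrm{d}x$, whence
\[
\|u\|^p_{W^{1,p}_V} = \int_{\mathbb{R}^N}|\nabla u|^p\,\mathrm{d}x + \int_{\mathbb{R}^N}V(x)|u|^p\,\mathrm{d}x \ge \int_{\mathbb{R}^N}|\nabla u|^p\,\mathrm{d}x + V_0\int_{\mathbb{R}^N}|u|^p\,\mathrm{d}x \ge \min\{1,V_0\}\,\|u\|^p_{W^{1,p}}.
\]
This is precisely the asserted inequality and shows that $W^{1,p}_V(\mathbb{R}^N)\hookrightarrow W^{1,p}(\mathbb{R}^N)$ is continuous with embedding constant $\min\{1,V_0\}^{-1/p}$.

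For the second embedding I would invoke the classical Sobolev theory. Since $p<N$, the Gagliardo-Nirenberg-Sobolev inequality gives $W^{1,p}(\mathbb{R}^N)\hookrightarrow L^{p^\ast}(\mathbb{R}^N)$ continuously, while the trivial bound $\|u\|_p\le\|u\|_{W^{1,p}}$ gives $W^{1,p}(\mathbb{R}^N)\hookrightarrow L^p(\mathbb{R}^N)$. For an intermediate exponent $q\in[p,p^\ast]$ I would interpolate: writing $\tfrac{1}{q}=\tfrac{\theta}{p}+\tfrac{1-\theta}{p^\ast}$ with $\theta\in[0,1]$, H\"older's inequality yields $\|u\|_q\le\|u\|_p^{\theta}\|u\|_{p^\ast}^{1-\theta}$, so continuity into $L^q$ follows. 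Composing with the first embedding delivers $W^{1,p}_V(\mathbb{R}^N)\hookrightarrow L^q(\mathbb{R}^N)$ continuously for every $q\in[p,p^\ast]$.

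For the compact local embedding I would fix a ball $B_R\subset\mathbb{R}^N$ and use that the restriction map $W^{1,p}(\mathbb{R}^N)\to W^{1,p}(B_R)$ is bounded, after which the Rellich-Kondrachov theorem makes $W^{1,p}(B_R)\hookrightarrow L^q(B_R)$ compact for every $q\in[1,p^\ast)$. Consequently any sequence bounded in $W^{1,p}_V(\mathbb{R}^N)$ is, by the first step, bounded in $W^{1,p}(\mathbb{R}^N)$, hence admits a subsequence converging in $L^q(B_R)$. A standard diagonal extraction over an exhaustion $B_1\subset B_2\subset\cdots$ of $\mathbb{R}^N$ then produces a single subsequence converging in $L^q_{\mathrm{loc}}(\mathbb{R}^N)$, which is the claimed compactness.

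The proof presents no genuine obstacle, as each step merely cites a classical theorem; the only point requiring minor care is the diagonal argument used to upgrade compactness on each ball $B_R$ to compactness in $L^q_{\mathrm{loc}}(\mathbb{R}^N)$, and the verification that boundedness in $W^{1,p}_V$ transfers to boundedness on each $W^{1,p}(B_R)$, which is immediate from the norm comparison established at the outset.
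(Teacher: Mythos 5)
Your proof is correct, and it coincides with what the paper intends: the paper states Lemma \ref{lem2.1} without proof, remarking only that it is a ``direct consequence of the classical embedding results in Sobolev space theory,'' which is precisely what you carry out (the pointwise bound $V\geq V_0$ for the norm comparison, Gagliardo--Nirenberg--Sobolev plus H\"older interpolation for the continuous embeddings, and Rellich--Kondrachov with a diagonal extraction for the local compactness). No gaps; your write-up simply supplies the details the paper leaves implicit.
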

\begin{lemma}\label{lem2.2}
Let $(V1)$ holds. If $s\in[N,\infty)$, then the following embeddings are continuous $$ W^{1,N}_V(\mathbb{R}^N)\hookrightarrow W^{1,N}(\mathbb{R}^N) \hookrightarrow L^s (\mathbb{R}^N)$$
with $\text{min}\{1,V_0\} \|u\|^N_{W^{1,N}}\leq \|u\|^N_{W^{1,N}_V}$, for all $u\in W^{1,N}_V(\mathbb{R}^N)$.
Furthermore, the embedding $W^{1,N}_V(\mathbb{R}^N)\hookrightarrow L^s _{\text{loc}}(\mathbb{R}^N)$ is compact for any $s\in[1,\infty)$.
\end{lemma}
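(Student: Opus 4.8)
The plan is to establish the three assertions in turn, each of which reduces to standard Sobolev space theory once the weighted norm is controlled from below by the unweighted one. First I would prove the norm inequality directly from $(V1)$. Writing
$$\|u\|^N_{W^{1,N}_V} = \int_{\mathbb{R}^N} |\nabla u|^N~\mathrm{d}x + \int_{\mathbb{R}^N} V(x)|u|^N~\mathrm{d}x$$
and using $V(x)\geq V_0$ to bound the potential term below by $V_0\|u\|^N_N$, I factor out $\min\{1,V_0\}$ to obtain
$$\|u\|^N_{W^{1,N}_V} \geq \|\nabla u\|^N_N + V_0\|u\|^N_N \geq \min\{1,V_0\}\,\|u\|^N_{W^{1,N}}.$$
This gives the claimed inequality and shows that the inclusion $W^{1,N}_V(\mathbb{R}^N)\hookrightarrow W^{1,N}(\mathbb{R}^N)$ is well-defined and continuous, since finiteness of the weighted norm forces finiteness of the unweighted one.

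Next, the continuous embedding $W^{1,N}(\mathbb{R}^N)\hookrightarrow L^s(\mathbb{R}^N)$ for $s\in[N,\infty)$ is the classical borderline Sobolev embedding recalled in the introduction: since the order of integrability equals the dimension, the critical exponent is $+\infty$ and every finite $s\geq N$ is admissible. Composing this with the first inclusion yields the full chain $W^{1,N}_V(\mathbb{R}^N)\hookrightarrow W^{1,N}(\mathbb{R}^N)\hookrightarrow L^s(\mathbb{R}^N)$.

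For the compact embedding into $L^s_{\mathrm{loc}}(\mathbb{R}^N)$, I would localize and invoke Rellich--Kondrachov. Given a bounded sequence $(u_n)$ in $W^{1,N}_V(\mathbb{R}^N)$, the first step shows it is bounded in $W^{1,N}(\mathbb{R}^N)$, hence in $W^{1,N}(\Omega)$ for every bounded open $\Omega$. Because the critical Sobolev exponent in the borderline case is $+\infty$, every $s\in[1,\infty)$ is subcritical, so the Rellich--Kondrachov theorem furnishes a compact embedding $W^{1,N}(\Omega)\hookrightarrow L^s(\Omega)$. Exhausting $\mathbb{R}^N$ by an increasing family of balls and extracting nested subsequences, a standard diagonal argument produces a single subsequence converging in $L^s(K)$ for every compact $K$, i.e. in $L^s_{\mathrm{loc}}(\mathbb{R}^N)$.

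The only point requiring care, rather than a genuine obstacle, is to observe that this local compactness uses \emph{only} $(V1)$ and not the coercivity hypothesis $(V2)$: the decay of $V$ at infinity is precisely what would be needed to upgrade to \emph{global} compactness on all of $\mathbb{R}^N$, whereas the stated local assertion follows purely from Rellich--Kondrachov on bounded domains together with the norm comparison established in the first step.
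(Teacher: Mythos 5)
Your proof is correct and follows exactly the route the paper intends: the paper offers no written proof of this lemma, stating only that it is a direct consequence of classical Sobolev embedding theory, and your argument --- the norm comparison from $(V1)$, the borderline embedding $W^{1,N}(\mathbb{R}^N)\hookrightarrow L^s(\mathbb{R}^N)$ for $s\in[N,\infty)$, and Rellich--Kondrachov plus a diagonal argument over an exhaustion by balls for the local compactness --- is precisely the standard filling-in of those classical results. The only cosmetic point is that Rellich--Kondrachov requires some boundary regularity of the bounded domain (not an arbitrary bounded open set), which your choice of balls in the exhaustion already supplies.
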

\begin{corollary}\label{cor2.3}
 Let $(V1)$ holds, in view of Lemma \ref{lem2.1} and Lemma \ref{lem2.2}, the  embeddings
 $$\mathbf{X}\hookrightarrow W^{1,p}(\mathbb{R}^N)\cap W^{1,N}(\mathbb{R}^N) \hookrightarrow L^\vartheta (\mathbb{R}^N). $$
are continuous for any $\vartheta \in [p,p^\ast]\cup [N,\infty)$. Also, the embedding $\mathbf{X}\hookrightarrow L^\vartheta _{\text{loc}}(\mathbb{R}^N)$ is compact for any $\vartheta\in[1,\infty)$.
\end{corollary}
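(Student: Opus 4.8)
The plan is to deduce everything by composing the two chains of embeddings already recorded in Lemma \ref{lem2.1} and Lemma \ref{lem2.2} with the trivial observation that the intersection norm dominates each factor norm. Indeed, from the definition $\|u\|_{\mathbf{X}}=\|u\|_{W^{1,p}_V}+\|u\|_{W^{1,N}_V}$ I immediately get $\|u\|_{W^{1,p}_V}\leq \|u\|_{\mathbf{X}}$ and $\|u\|_{W^{1,N}_V}\leq \|u\|_{\mathbf{X}}$, so the inclusions $\mathbf{X}\hookrightarrow W^{1,p}_V(\mathbb{R}^N)$ and $\mathbf{X}\hookrightarrow W^{1,N}_V(\mathbb{R}^N)$ are continuous. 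Combined with the norm comparisons $\min\{1,V_0\}\|u\|^p_{W^{1,p}}\leq\|u\|^p_{W^{1,p}_V}$ and $\min\{1,V_0\}\|u\|^N_{W^{1,N}}\leq\|u\|^N_{W^{1,N}_V}$ supplied by the two lemmas, I conclude that $\mathbf{X}\hookrightarrow W^{1,p}(\mathbb{R}^N)$ and $\mathbf{X}\hookrightarrow W^{1,N}(\mathbb{R}^N)$ are continuous, hence so is $\mathbf{X}\hookrightarrow W^{1,p}(\mathbb{R}^N)\cap W^{1,N}(\mathbb{R}^N)$.

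For the continuous embedding into $L^\vartheta(\mathbb{R}^N)$ I would split the exponent range according to which factor controls it. If $\vartheta\in[p,p^\ast]$, then the classical Sobolev embedding $W^{1,p}(\mathbb{R}^N)\hookrightarrow L^\vartheta(\mathbb{R}^N)$ of Lemma \ref{lem2.1} applies, and precomposing with the continuous map $\mathbf{X}\hookrightarrow W^{1,p}(\mathbb{R}^N)$ gives $\mathbf{X}\hookrightarrow L^\vartheta(\mathbb{R}^N)$ continuously. If instead $\vartheta\in[N,\infty)$, the same argument with Lemma \ref{lem2.2} and the factor $W^{1,N}(\mathbb{R}^N)$ yields the claim. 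Taking the union of the two exponent ranges covers every $\vartheta\in[p,p^\ast]\cup[N,\infty)$, which is exactly the asserted range.

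For the local compactness I would exploit the fact that one of the two factors already provides compactness on the whole range $[1,\infty)$. Since $\mathbf{X}\hookrightarrow W^{1,N}_V(\mathbb{R}^N)$ is continuous and, by Lemma \ref{lem2.2}, the embedding $W^{1,N}_V(\mathbb{R}^N)\hookrightarrow L^\vartheta_{\text{loc}}(\mathbb{R}^N)$ is compact for every $\vartheta\in[1,\infty)$, the composition $\mathbf{X}\hookrightarrow L^\vartheta_{\text{loc}}(\mathbb{R}^N)$ is compact, because the composition of a bounded linear map with a compact one is compact. Concretely, any sequence bounded in $\mathbf{X}$ is bounded in $W^{1,N}_V(\mathbb{R}^N)$, hence admits a subsequence converging strongly in $L^\vartheta_{\text{loc}}(\mathbb{R}^N)$.

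There is no genuine obstacle here; the statement is pure bookkeeping over the two lemmas. The only points that require a little care are (i) correctly matching each target exponent to the factor space that controls it, namely the lower range $[p,p^\ast]$ to $W^{1,p}$ and the upper range $[N,\infty)$ to $W^{1,N}$, noting that these ranges overlap precisely when $2p\geq N$ and otherwise leave the genuine gap $(p^\ast,N)$ uncovered; and (ii) observing that the full local-compactness range $[1,\infty)$ comes for free from the $W^{1,N}$ factor alone, so that no interpolation between the two factor spaces is needed.
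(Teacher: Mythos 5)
Your proposal is correct and follows exactly the route the paper intends: the paper states this corollary without a separate proof, as an immediate consequence of Lemma \ref{lem2.1} and Lemma \ref{lem2.2}, and your argument supplies precisely that bookkeeping (intersection norm dominates each factor norm, Sobolev embeddings of the two factors cover the two exponent ranges, and local compactness is inherited from the $W^{1,N}_V$ factor alone). Your side remarks, including that the ranges $[p,p^\ast]$ and $[N,\infty)$ overlap exactly when $2p\geq N$, are also accurate.
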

\begin{remark}\label{remark2.4}
    Under the assumption of $(V1)$, the following continuous embeddings hold:
     $$\mathbf{X}\hookrightarrow W^{1,r}_V(\mathbb{R}^N)\hookrightarrow W^{1,r}(\mathbb{R}^N)~\text{for}~r\in\{p,N\}.$$    
\end{remark}

\begin{lemma} [cf. \cite{MR2669653, MR2794422}]\label{lem2.5}
 Let $(V1)$ and $(V2)$ be satisfied, then the following results hold.
 \begin{itemize}
     \item[(a)] The embedding $W^{1,p}_V(\mathbb{R}^N)\hookrightarrow L^q (\mathbb{R}^N)$ is compact for any $q\in[p,p^\ast)$.
     \item[(b)] The  embedding $W^{1,N}_V(\mathbb{R}^N)\hookrightarrow L^s (\mathbb{R}^N)$ is compact for any $s\in[N,\infty)$.
 \end{itemize}
\end{lemma}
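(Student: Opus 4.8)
The plan is to establish part (a) in detail; part (b) then follows from a verbatim argument in which $p^\ast$ is replaced by any finite exponent $t>s$, exploiting that $W^{1,N}(\mathbb{R}^N)$ embeds continuously into every $L^t(\mathbb{R}^N)$ with $t\in[N,\infty)$ (Lemma \ref{lem2.2}). It suffices to treat the base case $q=p$ and recover the range $q\in(p,p^\ast)$ by interpolation: writing $\frac{1}{q}=\frac{1-\theta}{p}+\frac{\theta}{p^\ast}$ with $\theta\in(0,1)$, the inequality $\|u_n-u\|_{q}\leq\|u_n-u\|_{p}^{1-\theta}\|u_n-u\|_{p^\ast}^{\theta}$ together with the uniform bound on $\|u_n-u\|_{p^\ast}$ coming from Lemma \ref{lem2.1} reduces everything to strong convergence in $L^p$.

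So let $\{u_n\}$ be bounded in $W^{1,p}_V(\mathbb{R}^N)$, say $\|u_n\|_{W^{1,p}_V}\leq C$. By reflexivity I may pass to a subsequence with $u_n\rightharpoonup u$; replacing $u_n$ by $u_n-u$ I may assume $u_n\rightharpoonup 0$ and aim to show $\|u_n\|_p\to 0$. The idea is to split $\mathbb{R}^N$ into three regions governed by three independent mechanisms, choosing the thresholds in the correct order. Fix $\varepsilon>0$. First, on $\{V>c\}$ the Chebyshev-type estimate
\[
\int_{\{V>c\}}|u_n|^p\,\mathrm{d}x\leq\frac{1}{c}\int_{\mathbb{R}^N}V(x)|u_n|^p\,\mathrm{d}x\leq\frac{1}{c}\|u_n\|_{W^{1,p}_V}^p\leq\frac{C}{c}
\]
lets me fix $c$ so large that this term is below $\varepsilon$ for every $n$. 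Next, with $c$ now frozen, $(V2)$ gives $\mathrm{meas}(\{V\leq c\})<\infty$, whence $\mathrm{meas}\big(\{V\leq c\}\cap\{|x|\geq R\}\big)\to 0$ as $R\to\infty$; combining this with the uniform bound on $\|u_n\|_{p^\ast}$ and Hölder's inequality,
\[
\int_{\{V\leq c\}\cap\{|x|\geq R\}}|u_n|^p\,\mathrm{d}x\leq\|u_n\|_{p^\ast}^{p}\,\mathrm{meas}\big(\{V\leq c\}\cap\{|x|\geq R\}\big)^{1-\frac{p}{p^\ast}},
\]
which I force below $\varepsilon$ by taking $R$ large, uniformly in $n$. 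Finally, on the bounded region $\{|x|<R\}$ the compact local embedding of Lemma \ref{lem2.1} yields $u_n\to 0$ strongly in $L^p(\{|x|<R\})$, so this contribution tends to $0$ as $n\to\infty$. Adding the three pieces gives $\limsup_n\|u_n\|_p^p\leq 2\varepsilon$, and since $\varepsilon$ was arbitrary, $u_n\to 0$ in $L^p(\mathbb{R}^N)$.

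The heart of the argument, and the only place where $(V2)$ is genuinely needed, is the middle region $\{V\leq c\}\cap\{|x|\geq R\}$: there the potential supplies no coercivity, so the weighted norm is useless and I must instead trade the finite measure furnished by $(V2)$ against the strictly higher integrability $L^{p^\ast}$ coming from the Sobolev embedding. The order of the quantifiers is essential—$c$ must be selected before $R$—and the strict gap $p<p^\ast$ (respectively $s<t$ in part (b)) is precisely what renders the Hölder exponent $1-\frac{p}{p^\ast}$ positive, so that the measure factor decays. I expect the careful bookkeeping of these thresholds to be the main, if routine, obstacle; once the three-region decomposition is set up, each individual estimate is elementary.
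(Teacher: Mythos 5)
Your proof is correct. Note first that the paper itself gives no argument for this lemma: it is stated as a known result, with the two bracketed references in its header standing in for a proof. So there is nothing internal to compare against; what you have written is a complete, self-contained proof, and it is essentially the standard argument by which results of this type are established in the cited literature. Your structure is exactly right: the Chebyshev bound on $\{V>c\}$ uses only the uniform bound on $\|u_n\|_{W^{1,p}_V}$; the middle region $\{V\le c\}\cap\{|x|\ge R\}$ is the only place where $(V2)$ enters, traded via H\"older against the strictly higher integrability $L^{p^\ast}$ (respectively $L^t$ with $t>s$ finite in part (b), which exists because $W^{1,N}(\mathbb{R}^N)\hookrightarrow L^t(\mathbb{R}^N)$ for all $t\in[N,\infty)$); and the ball $\{|x|<R\}$ is disposed of by the compact local embedding, since a compact operator sends the weakly null sequence $u_n-u$ to a strongly null one. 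You also handle the two points where such proofs usually go wrong: the quantifier order ($c$ fixed before $R$), and the strict inequalities $q<p^\ast$, $s<t$, which make both the H\"older exponent $1-\frac{p}{p^\ast}$ and the interpolation weight $1-\theta$ strictly positive. Two cosmetic remarks only: the Chebyshev estimate should read $\frac{C^p}{c}$ rather than $\frac{C}{c}$ (immaterial, as $C$ is generic), and in the reduction step one should record explicitly that $u_n-u$ stays bounded in $W^{1,p}_V(\mathbb{R}^N)$ because $\|u\|_{W^{1,p}_V}\le\liminf_n\|u_n\|_{W^{1,p}_V}$ by weak lower semicontinuity of the norm, a fact you use implicitly.
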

\begin{corollary}\label{cor2.6}
    Let $(V1)$ and $(V2)$ are hold true, then in view of Lemma \ref{lem2.5}, the embedding 
     $\mathbf{X}\hookrightarrow L^\tau(\mathbb{R}^N)$ is compact for any $\tau\in[p,p^\ast)\cup[N,\infty)$.   
\end{corollary}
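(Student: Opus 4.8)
The plan is to reduce the statement directly to Lemma \ref{lem2.5} by exploiting the way the norm on $\mathbf{X}$ controls each of the two component norms. First I would fix $\tau\in[p,p^\ast)\cup[N,\infty)$ and take an arbitrary sequence $\{u_n\}\subset\mathbf{X}$ that is bounded, say $\|u_n\|_{\mathbf{X}}\leq C$ for all $n$. Since $\|u_n\|_{\mathbf{X}}=\|u_n\|_{W^{1,p}_V}+\|u_n\|_{W^{1,N}_V}$, each summand is bounded by $C$; equivalently, by Remark \ref{remark2.4} the continuous embeddings $\mathbf{X}\hookrightarrow W^{1,p}_V(\mathbb{R}^N)$ and $\mathbf{X}\hookrightarrow W^{1,N}_V(\mathbb{R}^N)$ guarantee that $\{u_n\}$ is bounded in both weighted Sobolev spaces simultaneously.

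Next I would split according to the range of $\tau$. If $\tau\in[p,p^\ast)$, then Lemma \ref{lem2.5}(a) applies to the bounded sequence $\{u_n\}$ in $W^{1,p}_V(\mathbb{R}^N)$: the embedding $W^{1,p}_V(\mathbb{R}^N)\hookrightarrow L^\tau(\mathbb{R}^N)$ is compact, so a subsequence of $\{u_n\}$ converges strongly in $L^\tau(\mathbb{R}^N)$. If instead $\tau\in[N,\infty)$, I would use Lemma \ref{lem2.5}(b) in the same manner with the bounded sequence $\{u_n\}$ in $W^{1,N}_V(\mathbb{R}^N)$, obtaining again a subsequence converging strongly in $L^\tau(\mathbb{R}^N)$. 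In either case every bounded sequence in $\mathbf{X}$ admits an $L^\tau$-convergent subsequence, which is exactly the compactness of $\mathbf{X}\hookrightarrow L^\tau(\mathbb{R}^N)$.

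There is no real obstacle here; the content of the corollary is entirely carried by Lemma \ref{lem2.5}, and the only point to record carefully is that the additive norm $\|\cdot\|_{\mathbf{X}}$ dominates each of $\|\cdot\|_{W^{1,p}_V}$ and $\|\cdot\|_{W^{1,N}_V}$, so that boundedness in $\mathbf{X}$ transfers to boundedness in whichever single component space is required for the chosen value of $\tau$. The continuity of the embedding $\mathbf{X}\hookrightarrow L^\tau(\mathbb{R}^N)$ on this range of $\tau$ is already recorded in Corollary \ref{cor2.3}, so only the compactness assertion remains, and that is settled by the two cases above.
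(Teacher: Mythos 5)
Your proof is correct and follows exactly the route the paper intends: the corollary is deduced directly from Lemma \ref{lem2.5}, using the fact that the additive norm on $\mathbf{X}$ dominates each of $\|\cdot\|_{W^{1,p}_V}$ and $\|\cdot\|_{W^{1,N}_V}$ (Remark \ref{remark2.4}), and then applying part (a) or part (b) according to whether $\tau\in[p,p^\ast)$ or $\tau\in[N,\infty)$. The paper gives no separate argument beyond citing Lemma \ref{lem2.5}, so your case-split write-up is simply a careful elaboration of the same proof.
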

\begin{lemma}[cf. \cite{MR4258779}]\label{lem2.7}
  The embedding $L^N(\mathbb{R}^N)\hookrightarrow L^q_h(\mathbb{R}^N)$ is continuous for any $q\in(1,N)$ and $$\|u\|_{q,h}\leq \|h\|^{\frac{1}{q}}_\eta \|u\|_N,~\text{for all}~u\in L^N(\mathbb{R}^N). $$
  Also, the embedding $W^{1,N}(\mathbb{R}^N)\hookrightarrow L^q_h (\mathbb{R}^N)$ is compact. Moreover, in view of remark \ref{remark2.4}, the following embeddings are compact $$W^{1,N}_V(\mathbb{R}^N)\hookrightarrow \hookrightarrow L^q_h (\mathbb{R}^N) \text{ and } \mathbf{X} \hookrightarrow\hookrightarrow L^q_h (\mathbb{R}^N).$$
\end{lemma}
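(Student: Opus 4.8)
The plan is to establish the three claims in order: first the continuous weighted embedding together with its explicit norm bound, then the compactness of $W^{1,N}(\mathbb{R}^N)\hookrightarrow L^q_h(\mathbb{R}^N)$, and finally to deduce the last two compact embeddings by composition. I would begin with the continuity and the inequality $\|u\|_{q,h}\le\|h\|_\eta^{1/q}\|u\|_N$ via a direct application of H\"older's inequality. The key arithmetic observation is that the conjugate exponent of $\eta=\frac{N}{N-q}$ is $\eta'=\frac{N}{q}$, so that $q\eta'=N$. Applying H\"older with exponents $\eta$ and $\eta'$ to the product $h\cdot|u|^q$ gives
\[
\int_{\mathbb{R}^N} h(x)|u|^q\,\mathrm{d}x \le \Big(\int_{\mathbb{R}^N} h^\eta\,\mathrm{d}x\Big)^{1/\eta}\Big(\int_{\mathbb{R}^N}|u|^{q\eta'}\,\mathrm{d}x\Big)^{1/\eta'} = \|h\|_\eta\,\|u\|_N^q,
\]
and taking $q$-th roots yields the stated bound, whence the embedding $L^N(\mathbb{R}^N)\hookrightarrow L^q_h(\mathbb{R}^N)$ is continuous.

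For the compactness of $W^{1,N}(\mathbb{R}^N)\hookrightarrow L^q_h(\mathbb{R}^N)$, I would take a bounded sequence $(u_n)$ in $W^{1,N}(\mathbb{R}^N)$; by reflexivity we may pass to a subsequence with $u_n\rightharpoonup u$ in $W^{1,N}(\mathbb{R}^N)$, and it suffices to prove $u_n\to u$ in $L^q_h(\mathbb{R}^N)$. Fix $\varepsilon>0$ and split $\int_{\mathbb{R}^N} h|u_n-u|^q$ over a ball $B_R$ and its complement $B_R^c$. On the tail, H\"older gives $\int_{B_R^c} h|u_n-u|^q\le\big(\int_{B_R^c} h^\eta\big)^{1/\eta}\|u_n-u\|_N^q$; since $(u_n)$ is bounded in $L^N(\mathbb{R}^N)$ (hence $\|u_n-u\|_N$ is uniformly bounded) and $h\in L^\eta(\mathbb{R}^N)$ forces $\int_{B_R^c} h^\eta\to 0$ as $R\to\infty$ by absolute continuity of the integral, this tail is bounded by $\varepsilon$ uniformly in $n$ once $R$ is fixed large enough. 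On the fixed ball $B_R$, H\"older gives $\int_{B_R} h|u_n-u|^q\le\|h\|_\eta\,\|u_n-u\|_{L^N(B_R)}^q$, and the compact embedding $W^{1,N}(\mathbb{R}^N)\hookrightarrow L^N(B_R)$ (Rellich--Kondrachov on the bounded domain $B_R$, consistent with the local compactness in Lemma \ref{lem2.2}) yields $u_n\to u$ in $L^N(B_R)$, so this term tends to $0$. Letting $n\to\infty$ and then $\varepsilon\to 0$ gives $u_n\to u$ in $L^q_h(\mathbb{R}^N)$.

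The remaining compact embeddings $W^{1,N}_V(\mathbb{R}^N)\hookrightarrow\hookrightarrow L^q_h(\mathbb{R}^N)$ and $\mathbf{X}\hookrightarrow\hookrightarrow L^q_h(\mathbb{R}^N)$ then follow at once by composing the continuous embeddings of Remark \ref{remark2.4}, namely $\mathbf{X}\hookrightarrow W^{1,N}_V(\mathbb{R}^N)\hookrightarrow W^{1,N}(\mathbb{R}^N)$, with the compact embedding just established, using that the composition of a bounded linear map with a compact one is compact.

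The main obstacle is the compactness step, and the genuine difficulty is the loss of compactness on the unbounded domain $\mathbb{R}^N$: the embedding into $L^q_h$ is \emph{not} compact merely because of the Sobolev embedding, and one must exploit the integrability of the weight. This is precisely what the tail-splitting argument does --- the decay of $\int_{B_R^c}h^\eta$ controls the behaviour at infinity uniformly in $n$, while Rellich--Kondrachov supplies compactness on each bounded ball --- so the interplay between these two ingredients is the crux of the proof.
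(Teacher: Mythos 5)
Your proposal is correct and takes essentially the expected approach: the paper itself gives no proof of this lemma, deferring entirely to the citation \cite{MR4258779}, and the standard argument there is exactly your combination of H\"older's inequality with the conjugate pair $\big(\eta,\eta'\big)=\big(\tfrac{N}{N-q},\tfrac{N}{q}\big)$, the uniform tail estimate from $\int_{B_R^c}h^\eta\,\mathrm{d}x\to 0$, Rellich--Kondrachov compactness on balls, and composition with the continuous embeddings of Remark \ref{remark2.4}. No gaps worth flagging; the only detail you gloss over (that a compact linear map sends the weakly convergent sequence $u_n\rightharpoonup u$ to a strongly convergent one in $L^N(B_R)$, so no further subsequence is needed) is standard and harmless.
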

The proof of the following lemma is a direct consequence of Lemma \ref{lem2.2}, Lemma \ref{lem2.5}, Theorem 1.2 in \cite{MR3836136} and remark \ref{remark2.4}. 
\begin{lemma}\label{lem2.8}
 Let $(V1)$ and $(V2)$ are satisfied. Suppose $q\geq N$ and $0\leq \beta <N$, then the embedding $$W^{1,N}_V(\mathbb{R}^N)\hookrightarrow L^q(\mathbb{R}^N,|x|^{-\beta})~\text{is compact}.$$ In particular, the embedding $\mathbf{X}\hookrightarrow L^q(\mathbb{R}^N,|x|^{-\beta})$ is compact for all $q\geq N$ and $0\leq \beta <N$. 
\end{lemma}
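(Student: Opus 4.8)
The plan is to establish the compact embedding $W^{1,N}_V(\mathbb{R}^N)\hookrightarrow L^q(\mathbb{R}^N,|x|^{-\beta})$ for $q\ge N$ and $0\le\beta<N$, and then deduce the statement for $\mathbf X$ via Remark \ref{remark2.4}. The first thing I would do is dispose of the non-singular range by noting that the local weight $|x|^{-\beta}$ is integrable near the origin (since $\beta<N$) and harmless away from it; the genuinely delicate point is the interaction between the singularity at the origin and the behavior at infinity. I would therefore split $\mathbb{R}^N=B_1(0)\cup(\mathbb{R}^N\setminus B_1(0))$ and treat the two regions separately.

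\emph{Continuity.} First I would record that the embedding is continuous. By the singular Trudinger-Moser inequality (Theorem 1.2 in \cite{MR3836136}) one controls $\int_{\mathbb{R}^N}|u|^q/|x|^\beta\,\mathrm dx$ for $q\ge N$: writing the integrand as the product of $|u|^q$ and the weight and using Hölder together with the fact that $\int_{B_1}|x|^{-\beta s}\,\mathrm dx<\infty$ for $s$ close to $1$ (possible because $\beta<N$), one bounds the singular integral by $\|u\|_N$-type quantities, which are in turn controlled by $\|u\|_{W^{1,N}_V}$ through Lemma \ref{lem2.2}. This gives the best constant $\mathcal S_{N,\beta}$ and, together with Remark \ref{remark2.4}, the continuous embedding $\mathbf X\hookrightarrow L^q(\mathbb{R}^N,|x|^{-\beta})$.

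\emph{Compactness.} Take a sequence $u_n\rightharpoonup u$ weakly in $W^{1,N}_V(\mathbb{R}^N)$; I would show $u_n\to u$ strongly in $L^q(\mathbb{R}^N,|x|^{-\beta})$. Near infinity, assumption $(V2)$ forces the tail $\int_{|x|>R}|u_n|^q/|x|^\beta\,\mathrm dx$ to be uniformly small: on $\{|x|>R\}$ the weight $|x|^{-\beta}\le R^{-\beta}$ is bounded, and the potential coercivity from $(V2)$ yields, exactly as in the proof of Lemma \ref{lem2.5}(b), a uniform smallness of the exterior $L^q$-mass for $q\ge N$. On the ball $B_R$, the weight $|x|^{-\beta}$ lies in $L^{s}(B_R)$ for some $s>1$ with conjugate $s'$, so by Hölder $\int_{B_R}|u_n-u|^q/|x|^\beta\,\mathrm dx\le \||x|^{-\beta}\|_{L^s(B_R)}\,\|u_n-u\|_{L^{qs'}(B_R)}^q$; since $qs'<\infty$, the compact local embedding $W^{1,N}_V(\mathbb{R}^N)\hookrightarrow L^{qs'}_{\mathrm{loc}}(\mathbb{R}^N)$ from Lemma \ref{lem2.2} makes this term vanish. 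Combining the two estimates gives strong convergence.

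The main obstacle I anticipate is not either region in isolation but choosing the Hölder exponent $s$ so that simultaneously $\beta s<N$ (integrability of the singular weight near $0$) and the resulting Lebesgue exponent $qs'$ remains finite so that the local compactness of Lemma \ref{lem2.2} applies; for $q\ge N$ both constraints are compatible precisely because $\beta<N$, and verifying this compatibility is the crux. Once the weighted embedding for $W^{1,N}_V$ is in hand, the conclusion for $\mathbf X$ is immediate from the continuous embedding $\mathbf X\hookrightarrow W^{1,N}_V(\mathbb{R}^N)$ in Remark \ref{remark2.4}.
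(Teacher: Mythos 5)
Your proposal is correct and matches the paper's intent: the paper gives no detailed argument, stating only that the lemma is a direct consequence of Lemma \ref{lem2.2}, Lemma \ref{lem2.5}, Theorem 1.2 in \cite{MR3836136} and Remark \ref{remark2.4}, which are exactly the ingredients you assemble (local compactness plus H\"older against $|x|^{-\beta}\in L^s(B_R)$ for $s<N/\beta$ near the origin, the $(V2)$-based compact embedding $W^{1,N}_V\hookrightarrow L^q(\mathbb{R}^N)$ with the bounded weight at infinity, and Remark \ref{remark2.4} to pass to $\mathbf{X}$). Your write-up simply fills in the routine details the paper leaves implicit.
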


Define the best constant $\mathcal{S}_{N,\beta}$ in the embedding  $\mathbf{X}\hookrightarrow L^N(\mathbb{R}^N,|x|^{-\beta})$ as follows:
 
\begin{equation}\label{eq2.1}
   \mathcal{S}_{N,\beta}=\inf_{u\in\mathbf{X}\setminus\{0\}}\frac{\|u\|_{\mathbf{X}}}{\|u\|_{L^N(\mathbb{R}^N,|x|^{-\beta})}} >0.
\end{equation}
Clearly, $$\|u\|_{L^N(\mathbb{R}^N,|x|^{-\beta})}\leq \mathcal{S}^{-1}_{N,\beta}\|u\|_{\mathbf{X}},~\text{for all}~u\in\mathbf{X}.$$
\begin{corollary} [cf. \cite{MR2873855}]\label{cor2.9}
 For any $p\geq 1$, $t\geq 0$ be real numbers and $N\geq 2$, then 
 $$ \bigg(\exp(t)-\sum_{j=0}^{N-2}\frac{t^j}{j!}\bigg)^p\leq \exp(pt)-\sum_{j=0}^{N-2}\frac{(pt)^j}{j!}. $$
\end{corollary}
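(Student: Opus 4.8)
The plan is to recast the inequality in a form that makes an induction on $N$ transparent. For an integer $m \geq 2$ and $t \geq 0$ write
\[
\Psi_m(t) = \exp(t) - \sum_{j=0}^{m-2}\frac{t^j}{j!} = \sum_{j=m-1}^{\infty}\frac{t^j}{j!},
\]
so that $\Psi_m(t) \geq 0$, $\Psi_m(0)=0$, and the right-hand side of the claimed inequality is exactly $\Psi_N(pt)$ (replace $t$ by $pt$ in the definition). Thus the assertion to prove is the single inequality
\[
\Psi_N(t)^p \leq \Psi_N(pt), \qquad t \geq 0,\ p \geq 1.
\]
Two elementary identities will drive the argument: differentiating term by term gives $\Psi_m'(t) = \Psi_{m-1}(t)$, and comparing the two series gives $\Psi_{m-1}(t) - \Psi_m(t) = t^{m-2}/(m-2)! \geq 0$, i.e.\ $\Psi_m(t) \leq \Psi_{m-1}(t)$ for $t \geq 0$.

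First I would dispose of the base case $N=2$. Here $\Psi_2(t) = \exp(t)-1$, so the claim reads $(\exp(t)-1)^p \leq \exp(pt)-1$. Setting $x = \exp(t) \geq 1$ this is $(x-1)^p + 1 \leq x^p$, which is the superadditivity of $s \mapsto s^p$ for $p \geq 1$: for $a,b \geq 0$ one has $a^p + b^p \leq (a+b)^p$ (apply with $a = x-1$, $b=1$), a consequence of the pointwise bound $s^p \leq s$ on $[0,1]$ after normalising by $(a+b)^p$.

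For the inductive step, assume $\Psi_{m}(t)^p \leq \Psi_{m}(pt)$ holds for $m = N-1$ and all $t \geq 0$, and set $f(t) = \Psi_N(pt) - \Psi_N(t)^p$. Then $f(0)=0$, and for $t>0$ the chain rule together with $\Psi_N' = \Psi_{N-1}$ gives
\[
f'(t) = p\,\Psi_{N-1}(pt) - p\,\Psi_N(t)^{p-1}\Psi_{N-1}(t).
\]
The key estimate is then the chain, valid because $p-1 \geq 0$ and $0 \leq \Psi_N(t) \leq \Psi_{N-1}(t)$,
\[
\Psi_N(t)^{p-1}\Psi_{N-1}(t) \leq \Psi_{N-1}(t)^{p-1}\Psi_{N-1}(t) = \Psi_{N-1}(t)^p \leq \Psi_{N-1}(pt),
\]
where the last step is the induction hypothesis. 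Hence $f'(t) \geq 0$ on $(0,\infty)$; since $f$ is continuous on $[0,\infty)$ with $f(0)=0$, it follows that $f(t) \geq 0$ for all $t \geq 0$, which is precisely the desired inequality at level $N$.

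The argument has no serious obstacle; the only point requiring mild care is the differentiation of $t \mapsto \Psi_N(t)^p$ at the origin when $p$ is non-integer, which is sidestepped by establishing $f' \geq 0$ only on $(0,\infty)$ and invoking continuity at $t=0$. The one genuinely substantive input is the superadditivity inequality $a^p + b^p \leq (a+b)^p$ used in the base case; everything else is bookkeeping with the two identities $\Psi_m' = \Psi_{m-1}$ and $\Psi_m \leq \Psi_{m-1}$.
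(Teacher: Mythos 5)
Your proof is correct, but there is nothing in the paper to compare it against: the paper states this corollary with only the citation ``cf.~\cite{MR2873855}'' (where it appears as a preliminary lemma in Lam--Lu's work on $N$-Laplacian equations with critical exponential growth) and gives no argument of its own. Your self-contained derivation therefore supplies what the paper outsources, and every step checks out: the reformulation as $\Psi_N(t)^p \le \Psi_N(pt)$ with $\Psi_m(t)=\sum_{j\ge m-1}t^j/j!$ is exact; the base case $N=2$ reduces, via $x=e^t$, to the superadditivity $(x-1)^p+1\le x^p$, which your normalisation argument proves; the two identities $\Psi_m'=\Psi_{m-1}$ and $\Psi_{m-1}-\Psi_m=t^{m-2}/(m-2)!\ge 0$ are correct; and in the inductive step the chain $\Psi_N(t)^{p-1}\Psi_{N-1}(t)\le \Psi_{N-1}(t)^p\le \Psi_{N-1}(pt)$ legitimately combines $0\le\Psi_N\le\Psi_{N-1}$, $p-1\ge 0$, and the induction hypothesis, giving $f'\ge 0$ on $(0,\infty)$ for $f(t)=\Psi_N(pt)-\Psi_N(t)^p$; since $f(0)=0$ and $f$ is continuous at $0$, the conclusion follows, and you correctly flag (and sidestep) the only delicate point, differentiability of $\Psi_N(t)^p$ at $t=0$ for non-integer $p$. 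The payoff of your route is an elementary, fully self-contained proof valid for all real $p\ge 1$ that uses nothing beyond one-variable calculus, whereas the paper (and much of the literature following Lam--Lu) simply quotes the inequality.
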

\begin{remark}\label{rem2.10}
 The function 
   $\Phi(t)=\exp(t)-\displaystyle\sum_{j=0}^{N-2}\frac{t^j}{j!}$ is increasing and convex in $[0,\infty)$.   
\end{remark}
The inequality in the following lemma is the singular Trudinger-Moser inequality, which Adimurthi and Yang first studied in \cite{MR2669653}.
\begin{lemma}\label{lem2.11}
For all $\alpha>0$, $0\leq \beta <N$ and $u\in W^{1,N}(\mathbb{R}^N)~(N\geq 2)$, then $\cfrac{\Phi\big(\alpha|u|^{N^\prime}\big)}{|x|^\beta}\in L^1(\mathbb{R}^N)$. Furthermore, we have for all $\alpha\leq\big(1-\frac{\beta}{N}\big)\alpha_N$ and $\tau>0$
$$ \sup_{\|u\|_{W^{1,N}_\tau}\leq 1} ~\int_{\mathbb{R}^N}\cfrac{\Phi\big(\alpha|u|^{N^\prime}\big)}{|x|^\beta}~\mathrm{d}x<+\infty,$$
where $\Phi$ is defined as in $(G1)$ and $\alpha_N=N\omega_{N-1}^\frac{1}{N-1}$, with $\omega_{N-1}$ is the volume of the unit sphere $S^{N-1}$. Also, the above inequality is sharp for $\alpha>\big(1-\frac{\beta}{N}\big)\alpha_N$, i.e., the supremum is infinity.    
\end{lemma}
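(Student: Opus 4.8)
The plan is to establish the pointwise integrability first and then the uniform bound, in each case isolating the behaviour near the origin (where $|x|^{-\beta}$ is singular) from the behaviour near infinity (where the domain $\mathbb{R}^N$ destroys compactness). Throughout, let $B_1\subset\mathbb{R}^N$ denote the Euclidean unit ball.

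\emph{Integrability.} Fix $u\in W^{1,N}(\mathbb{R}^N)$ and split $\mathbb{R}^N=B_1\cup(\mathbb{R}^N\setminus B_1)$. On the exterior $|x|^{-\beta}\le 1$, so $\int_{\mathbb{R}^N\setminus B_1}|x|^{-\beta}\Phi(\alpha|u|^{N^\prime})\,\mathrm{d}x\le\int_{\mathbb{R}^N}\Phi(\alpha|u|^{N^\prime})\,\mathrm{d}x$, which is finite by the non-singular Trudinger--Moser inequality on $\mathbb{R}^N$ (\cite{do1997n,MR2400264}). On $B_1$ I would apply Hölder's inequality with exponents $s,s^\prime$, choosing $s>\frac{N}{N-\beta}$ so that $\beta s^\prime<N$; then $\int_{B_1}|x|^{-\beta s^\prime}\,\mathrm{d}x<\infty$, while Corollary \ref{cor2.9} gives $\Phi(\alpha|u|^{N^\prime})^{s}\le\Phi(s\alpha|u|^{N^\prime})$, whose integral over the bounded set $B_1$ is finite by the classical Trudinger--Moser inequality (valid for every exponent on a bounded domain). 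This proves $|x|^{-\beta}\Phi(\alpha|u|^{N^\prime})\in L^1(\mathbb{R}^N)$.

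\emph{Uniform bound.} First I would reduce to radial functions: replacing $u$ by its Schwarz symmetrisation $u^{*}$ leaves $\|u\|_N$ unchanged and does not increase $\|\nabla u\|_N$ (Pólya--Szegő), so $\|u^{*}\|_{W^{1,N}_\tau}\le\|u\|_{W^{1,N}_\tau}\le 1$; since $|x|^{-\beta}$ coincides with its own decreasing rearrangement and $t\mapsto\Phi(\alpha t^{N^\prime})$ is increasing (Remark \ref{rem2.10}), the Hardy--Littlewood inequality shows the singular integral does not decrease under symmetrisation. Hence it suffices to bound the supremum over radially decreasing $u$, and by the scaling $u\mapsto u(\cdot/\rho)$ (which fixes $\|\nabla u\|_N$ and rescales $\|u\|_N$ and the singular integral by fixed powers of $\rho$) I may take $\tau=1$. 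Splitting again over $B_1$ and its complement, the exterior piece is controlled by $\int_{\mathbb{R}^N}\Phi(\alpha|u|^{N^\prime})\,\mathrm{d}x$, uniformly bounded by the non-singular inequality since $\alpha\le\big(1-\tfrac{\beta}{N}\big)\alpha_N\le\alpha_N$. The interior piece on $B_1$ is reduced to the singular Trudinger--Moser inequality on a ball, whose sharp exponent is exactly $\big(1-\tfrac{\beta}{N}\big)\alpha_N$; for a radial decreasing $u$ this follows from Moser's one-dimensional computation after the change of variable $t=\log(1/|x|)$, where the factor $|x|^{-\beta}=e^{\beta t}$ precisely shifts the critical level from $\alpha_N$ to $\big(1-\tfrac{\beta}{N}\big)\alpha_N$, which is the content of \cite{MR2669653}. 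For the sharpness claim, for $\alpha>\big(1-\tfrac{\beta}{N}\big)\alpha_N$ I would insert the concentrating Moser sequence centred at the origin, normalised in $\|\cdot\|_{W^{1,N}_\tau}$, and check by direct computation that the extra weight $|x|^{-\beta}$ pushes the integral to $+\infty$.

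The main obstacle is the interior estimate on $B_1$ with the \emph{exact} constant $\big(1-\tfrac{\beta}{N}\big)\alpha_N$: carrying out the weighted one-dimensional Moser argument (or equivalently invoking the bounded-domain singular inequality for functions that need not vanish on $\partial B_1$) and making the gradient-concentration bookkeeping match the claimed threshold is the delicate step. By comparison, the exterior estimate and the integrability part are routine once the non-singular Trudinger--Moser inequality on $\mathbb{R}^N$ is in hand.
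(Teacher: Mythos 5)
The first thing to note is that the paper does not prove Lemma \ref{lem2.11} at all: it is quoted as a known result, the singular Trudinger--Moser inequality first established by Adimurthi and Yang \cite{MR2669653} (the paper says exactly this in the sentence preceding the lemma), so there is no in-paper argument to compare against. Your proposal reconstructs the standard proof architecture from that literature, and the reductions you supply are sound: the H\"older splitting on $B_1$ with $s>\frac{N}{N-\beta}$ (so that $\beta s^\prime<N$) together with Corollary \ref{cor2.9} correctly settles the integrability claim, given the nonsingular integrability result on $\mathbb{R}^N$ from \cite{do1997n,MR2400264}; Schwarz symmetrisation combined with Hardy--Littlewood and the fact that $|x|^{-\beta}$ equals its own symmetric-decreasing rearrangement legitimately reduces the supremum to radial decreasing functions; and the dilation $u\mapsto u(\cdot/\rho)$ with $\rho=\tau^{1/N}$, which preserves $\|\nabla u\|_N$, scales $\|u\|_N^N$ by $\rho^N$ and the singular integral by $\rho^{N-\beta}$, correctly reduces general $\tau>0$ to $\tau=1$ at the cost of a $\tau$-dependent constant, which is all the lemma asserts. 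However, as a self-contained proof your attempt has a genuine gap exactly where you flag it: the interior estimate on $B_1$ with the sharp constant $\big(1-\frac{\beta}{N}\big)\alpha_N$ is not carried out but attributed to ``the content of \cite{MR2669653}'' --- which is the very statement being proved. The difficulty there is real and not routine: since $u$ need not vanish on $\partial B_1$, the boundary value must be absorbed using the $L^N$-part of the norm without degrading the constant, and this bookkeeping is precisely the substance of \cite{MR2400264} and \cite{MR2669653}. The sharpness claim is likewise left as an unexecuted Moser-sequence computation (it does go through: on the core $|x|\le 1/k$ the exponent compares $\alpha N/\alpha_N$ against $N-\beta$, and diverges exactly when $\alpha>\big(1-\frac{\beta}{N}\big)\alpha_N$). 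In short, your reductions are correct and are genuine added value, but at the decisive step your proof cites rather than proves --- which puts it on the same logical footing as the paper's own treatment of this lemma, no better and no worse.
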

\begin{remark}\label{rem2.12}
    Lemma \ref{lem2.11} is still true if we choose  $\tau=1$.
\end{remark}

Define the functional $\mathscr{L}: \mathbf{X}\to \mathbb{R}$ by
$$ \mathscr{L}(u)=\frac{1}{p}\int_{\mathbb{R}^N}(|\nabla u|^p+V(x)|u|^p)~\mathrm{d}x+\frac{1}{N}\int_{\mathbb{R}^N}(|\nabla u|^N+V(x)|u|^N)~\mathrm{d}x,~\text{for all}~u\in \mathbf{X}. $$
Then, $\mathscr{L}$ is well-defined and of class $\mathcal{C}^1(\mathbf{X},\mathbb{R})$. Let $\langle{\cdot,\cdot}\rangle$ is the duality pair between $\mathbf{X}^\ast$ and $\mathbf{X}$, then its Gateaux derivative is given by $$\langle{\mathscr{L}^\prime(u),v}\rangle= \big\langle{u,v}\big\rangle_{p,V}+\big\langle{u,v}\big\rangle_{N,V},\text{for all}~u,v\in \mathbf{X}.$$

The proof of the following lemma is quite similar to the proof of Lemma $6$ in \cite{MR4411714}; hence we omit it.
\begin{lemma}\label{lem2.12}
 Let $(V1)$ holds. Then the operator $\mathscr{L}^{\prime}: \mathbf{X}\to \mathbf{X}^\ast$ defined above has the following properties.
 \begin{itemize}
     \item [(a)] $\mathscr{L}^{\prime}$ is continuous, bounded and strictly monotone.
     \item [(b)] $\mathscr{L}^{\prime}$ is a map of type $(S)_{+}$, i.e., if $u_n\rightharpoonup u$ in $\mathbf{X}$ as $n\to\infty$ and  $\displaystyle\limsup_{n\to\infty}~\langle{\mathscr{L}^{\prime}(u_n)-\mathscr{L}^{\prime}(u),u_n-u}\rangle\leq 0$, then $u_n\to u~\text{in}~\mathbf{X}$ as $n\to\infty$. 
    \item [(c)]$\mathscr{L}^{\prime}$ is a homeomorphism.
 \end{itemize}
\end{lemma}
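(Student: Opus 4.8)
The plan is to treat $\mathscr{L}^\prime$ as the sum of the two duality-type maps associated with $W^{1,p}_V(\mathbb{R}^N)$ and $W^{1,N}_V(\mathbb{R}^N)$, namely $u\mapsto\langle u,\cdot\rangle_{p,V}$ and $u\mapsto\langle u,\cdot\rangle_{N,V}$, each of which further splits into a gradient part and a weighted zero-order part. Since $\mathbf{X}$ is reflexive and both constituent spaces are uniformly convex (as recalled before \eqref{eq1}), every assertion reduces to the classical elementary vector inequalities for the map $\xi\mapsto|\xi|^{r-2}\xi$, applied separately in the regimes $r\ge 2$ (covering $r=N$ since $N\ge 2$) and $1<r<2$ (which may occur for $r=p$). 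Throughout I abbreviate, for $r\in\{p,N\}$,
$$I_r(u,v)=\langle u,u-v\rangle_{r,V}-\langle v,u-v\rangle_{r,V}=\int_{\mathbb{R}^N}\big(|\nabla u|^{r-2}\nabla u-|\nabla v|^{r-2}\nabla v\big)\cdot\nabla(u-v)\,\mathrm{d}x+\int_{\mathbb{R}^N}V(x)\big(|u|^{r-2}u-|v|^{r-2}v\big)(u-v)\,\mathrm{d}x,$$
so that $\langle\mathscr{L}^\prime(u)-\mathscr{L}^\prime(v),u-v\rangle=I_p(u,v)+I_N(u,v)$.

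For part (a), boundedness is immediate from Hölder's inequality, which gives $|\langle\mathscr{L}^\prime(u),v\rangle|\le\|u\|_{W^{1,p}_V}^{p-1}\|v\|_{W^{1,p}_V}+\|u\|_{W^{1,N}_V}^{N-1}\|v\|_{W^{1,N}_V}$, hence a polynomial bound on $\|\mathscr{L}^\prime(u)\|_{\mathbf{X}^\ast}$. Continuity would follow from the continuity of the Nemytskii operators $\xi\mapsto|\xi|^{r-2}\xi$ from $L^r$ into $L^{r^\prime}$ together with the continuous embeddings of Remark \ref{remark2.4}; given $u_n\to u$ in $\mathbf{X}$ one passes to a subsequence with a.e.\ convergence and a dominating function and applies the generalized dominated convergence theorem to each of the four integrals. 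Strict monotonicity is where the elementary inequalities enter: for $a\ne b$ one has $(|a|^{r-2}a-|b|^{r-2}b)\cdot(a-b)>0$ in every regime, so each integrand in $I_r$ is nonnegative and strictly positive on a set of positive measure whenever $u\ne v$; since $V\ge V_0>0$ by $(V1)$, this yields $I_p(u,v)+I_N(u,v)>0$.

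For part (b), suppose $u_n\rightharpoonup u$ and $\limsup_n\langle\mathscr{L}^\prime(u_n)-\mathscr{L}^\prime(u),u_n-u\rangle\le 0$. By the monotonicity just established each $I_r(u_n,u)\ge 0$, so in fact $I_p(u_n,u)\to 0$ and $I_N(u_n,u)\to 0$ separately, and moreover each of the four nonnegative integrals tends to $0$. For the pieces with $r=N\ge 2$, and for the $p$-piece when $p\ge 2$, the inequality $(|a|^{r-2}a-|b|^{r-2}b)\cdot(a-b)\ge c_r|a-b|^r$ converts this directly into $\|\nabla u_n-\nabla u\|_r\to 0$ and $\|u_n-u\|_{r,V}\to 0$. \textbf{The main obstacle} is the regime $1<p<2$, where this coercive inequality fails; there I would instead use the refined Simon inequality $(|a|^{p-2}a-|b|^{p-2}b)\cdot(a-b)\ge c_p|a-b|^2(|a|+|b|)^{p-2}$ and recover $\|\nabla u_n-\nabla u\|_p\to 0$ by a Hölder argument: writing $|a-b|^p=\big(|a-b|^2(|a|+|b|)^{p-2}\big)^{p/2}(|a|+|b|)^{p(2-p)/2}$ and applying Hölder with exponents $2/p$ and $2/(2-p)$, the first factor is controlled by the vanishing integral and the second by the boundedness of $\{u_n\}$ in $W^{1,p}_V(\mathbb{R}^N)$. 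Combining the four limits gives $\|u_n-u\|_{\mathbf{X}}\to 0$.

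Finally, for part (c) I would invoke the Browder--Minty theorem: $\mathscr{L}^\prime$ is continuous, bounded, strictly monotone and coercive on the reflexive space $\mathbf{X}$ — coercivity holding because $\langle\mathscr{L}^\prime(u),u\rangle=\|u\|_{W^{1,p}_V}^{p}+\|u\|_{W^{1,N}_V}^{N}$ grows superlinearly in $\|u\|_{\mathbf{X}}$ — so $\mathscr{L}^\prime$ is a bijection of $\mathbf{X}$ onto $\mathbf{X}^\ast$, injectivity coming from strict monotonicity. To see that the inverse is continuous, take $f_n\to f$ in $\mathbf{X}^\ast$ and set $u_n=(\mathscr{L}^\prime)^{-1}(f_n)$, $u=(\mathscr{L}^\prime)^{-1}(f)$; coercivity makes $\{u_n\}$ bounded, so along a subsequence $u_n\rightharpoonup v$, and since
$$\langle\mathscr{L}^\prime(u_n)-\mathscr{L}^\prime(v),u_n-v\rangle=\langle f_n-f,u_n-v\rangle+\langle f,u_n-v\rangle-\langle\mathscr{L}^\prime(v),u_n-v\rangle\to 0,$$
the $(S)_+$ property from part (b) forces $u_n\to v$ strongly; continuity of $\mathscr{L}^\prime$ then gives $\mathscr{L}^\prime(v)=f$, so $v=u$, and the usual subsequence argument upgrades this to convergence of the full sequence. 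This establishes that $\mathscr{L}^\prime$ is a homeomorphism.
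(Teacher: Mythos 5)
Your proof is correct, and it is worth noting that the paper does not actually prove Lemma \ref{lem2.12}: it omits the argument, stating only that it is ``quite similar to the proof of Lemma $6$ in \cite{MR4411714}.'' Your proposal therefore supplies exactly the details the paper delegates to that reference, and it does so along the expected route, which is fully consistent with the toolkit the paper itself uses elsewhere: the splitting of $\langle\mathscr{L}^\prime(u)-\mathscr{L}^\prime(v),u-v\rangle$ into four nonnegative pieces, Simon's inequalities in the two regimes $r\geq 2$ and $1<r<2$ (the identical H\"older trick, citing Lemma 4.2 of \cite{MR3732173}, appears verbatim in Case 2 of the paper's proof of Theorem \ref{thm1.1}), and the combination of Minty--Browder with the $(S)_+$ property for the homeomorphism claim (the paper invokes precisely this property of $\mathscr{L}^\prime$ in Case 4 of that same proof). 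Two small points deserve to be made explicit. First, in part (a), your bound $|\langle\mathscr{L}^\prime(u),v\rangle|\leq\|u\|_{W^{1,p}_V}^{p-1}\|v\|_{W^{1,p}_V}+\|u\|_{W^{1,N}_V}^{N-1}\|v\|_{W^{1,N}_V}$ requires, after the integral H\"older inequality, a discrete H\"older step with exponents $r$ and $r^\prime$ to recombine $\|\nabla u\|_r^{r-1}\|\nabla v\|_r+\|u\|_{r,V}^{r-1}\|v\|_{r,V}$ into $\|u\|_{W^{1,r}_V}^{r-1}\|v\|_{W^{1,r}_V}$; this works but should be said. Second, in part (b) you spell out the $1<p<2$ H\"older argument only for the gradient term; the weighted zero-order term needs the same computation with the weight distributed as $V=V^{p/2}\,V^{(2-p)/2}$ across the two H\"older factors, i.e.
\begin{equation*}
\int_{\mathbb{R}^N} V|u_n-u|^p\,\mathrm{d}x\leq\Bigg(\int_{\mathbb{R}^N} V\,\frac{|u_n-u|^2}{(|u_n|+|u|)^{2-p}}\,\mathrm{d}x\Bigg)^{\frac{p}{2}}\Bigg(\int_{\mathbb{R}^N} V(|u_n|+|u|)^p\,\mathrm{d}x\Bigg)^{\frac{2-p}{2}},
\end{equation*}
which tends to zero for the same reasons. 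Neither point affects the validity of the argument.
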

 To study the nonnegative solutions of \eqref{main problem}, define the energy functional $J_{\lambda_1,\lambda_2}:\mathbf{X}\to\mathbb{R}$ by
\begin{equation}\label{eq2.2}
  J_{\lambda_1,\lambda_2}(u)=\frac{1}{p} \widehat{M}\big( \|u\|_{W^{1,p}_V}^p\big)+\frac{1}{N} \widehat{M}\big( \|u\|_{W^{1,N}_V}^N\big)-\frac{\lambda_1}{q}\int_{\mathbb{R}^N}h(x)(u^{+})^q~\mathrm{d}x-\lambda_2\int_{\mathbb{R}^N}\cfrac{G(x,u)}{|x|^\beta}~\mathrm{d}x.  
\end{equation}
Under the assumption of $(G1)$ and the singular Trudinger-Moser inequality, one can easily verify that $J_{\lambda_1,\lambda_2}$ is well-defined, of class $\mathcal{C}^1(\mathbf{X},\mathbb{R})$ and its Gateaux derivative is defined by  
\begin{equation}\label{equation2.33} \langle{J^\prime_{\lambda_1,\lambda_2}(u),v}\rangle= M\big( \|u\|_{W^{1,p}_V}^p\big)\big\langle{u,v}\big\rangle_{p,V}+M\big( \|u\|_{W^{1,N}_V}^N\big)\big\langle{u,v}\big\rangle_{N,V}
     - \lambda_1\int_{\mathbb{R}^N}h(x)(u^{+})^{q-1}v~\mathrm{d}x-\lambda_2\int_{\mathbb{R}^N}\cfrac{g(x,u)}{|x|^\beta}v~\mathrm{d}x,
\end{equation}
for all $u,v\in\mathbf{X}$. The critical points of the energy functional $J_{\lambda_1,\lambda_2}$ are weak solutions of \eqref{main problem}.
\section{Proof of Theorem \ref{thm1.1}}\label{sec3}
In this section, for the sake of simplicity, the structural assumptions needed by Theorem \ref{thm1.1} hold, even if they are not explicitly stated. To prove Theorem \ref{thm1.1}, firstly, we shall prove that every nontrivial solution of \eqref{main problem} is nonnegative. Then, we established the mountain pass geometrical structures of the energy functional $J_{\lambda_1,\lambda_2}$.
\begin{lemma}\label{lem3.1}
 Any nontrivial critical point of the energy functional $J_{\lambda_1,\lambda_2}$ is nonnegative. 
\end{lemma}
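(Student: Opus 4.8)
The plan is to test the weak formulation of criticality against the negative part $u^-$, exploiting that the nonlinearity is switched off on $\{u\le 0\}$. Let $u\in\mathbf{X}$ be a nontrivial critical point of $J_{\lambda_1,\lambda_2}$. Since the positive and negative parts of a Sobolev function remain in the same weighted space (because $(u^-)^r\le|u|^r$ pointwise and $\nabla u^-$ coincides with $-\nabla u$ or $0$ a.e.), $u^-\in\mathbf{X}$ is an admissible test function, and I would insert $v=u^-$ into the identity $\langle J^\prime_{\lambda_1,\lambda_2}(u),v\rangle=0$ furnished by \eqref{equation2.33}.

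First I would evaluate the two duality pairings. Writing $u=u^+-u^-$ and using that $u=-u^-$ and $\nabla u^-=-\nabla u$ a.e. on $\{u<0\}$, while $u^-=0$ and $\nabla u^-=0$ a.e. on $\{u\ge 0\}$, a direct pointwise computation gives $\langle u,u^-\rangle_{r,V}=-\|u^-\|_{W^{1,r}_V}^r$ for $r\in\{p,N\}$. Next I would show that both terms on the right-hand side vanish: on the set $\{u<0\}$, where $u^->0$, one has $u^+=0$, so $h(x)(u^+)^{q-1}u^-\equiv 0$, and by $(G1)$ we have $g(x,u)=0$ whenever $u\le 0$, so $\tfrac{g(x,u)}{|x|^\beta}u^-\equiv 0$ as well. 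Consequently the critical point identity collapses to
\[
M\big(\|u\|_{W^{1,p}_V}^p\big)\,\|u^-\|_{W^{1,p}_V}^p+M\big(\|u\|_{W^{1,N}_V}^N\big)\,\|u^-\|_{W^{1,N}_V}^N=0.
\]

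Both summands are nonnegative because $M\ge 0$, so each must vanish separately. This is the only delicate point, and it is precisely where the degenerate nature of the Kirchhoff function (note $M(0)=0$) must be handled: I would invoke nontriviality of $u$ to guarantee $\|u\|_{W^{1,p}_V}>0$ and $\|u\|_{W^{1,N}_V}>0$, so that $(M)$ yields $M\big(\|u\|_{W^{1,p}_V}^p\big)=a\|u\|_{W^{1,p}_V}^{pk}>0$ and likewise for the $N$-term. Dividing out these strictly positive coefficients forces $\|u^-\|_{W^{1,p}_V}=\|u^-\|_{W^{1,N}_V}=0$, hence $u^-=0$ and $u\ge 0$ a.e. in $\mathbb{R}^N$. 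The main obstacle is thus avoiding the degenerate value $M(0)=0$, which nontriviality of the critical point circumvents; the sign computation for the pairings and the vanishing of the right-hand side are routine.
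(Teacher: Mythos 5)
Your proposal is correct and follows essentially the same route as the paper: test the critical point identity \eqref{equation2.33} with $v=u^-$, observe that the $h$-term and the $g$-term vanish since $g(x,u)=0$ for $u\le 0$ by $(G1)$ and $(u^+)^{q-1}u^-\equiv 0$, and then use nontriviality of $u$ together with $(M)$ to conclude $M(\|u\|_{W^{1,p}_V}^p),M(\|u\|_{W^{1,N}_V}^N)>0$, forcing $\|u^-\|_{\mathbf{X}}=0$. Your explicit computation of the sign $\langle u,u^-\rangle_{r,V}=-\|u^-\|_{W^{1,r}_V}^r$ and the remark on why degeneracy of $M$ is circumvented are exactly the (implicit) content of the paper's argument.
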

\begin{proof}
    Let $u\in\mathbf{X}\setminus\{0\}$ be a critical point of the energy functional $J_{\lambda_1,\lambda_2}$. Write $u=u^+-u^-$ and choose $v=u^-$ as a test function in $\mathbf{X}$, then it follows  from \eqref{equation2.33} that
    $$ M\big( \|u\|_{W^{1,p}_V}^p\big)\big\|u^-\|_{W^{1,p}_V}^p+M\big( \|u\|_{W^{1,N}_V}^N\big)\big\|u^-\|_{W^{1,N}_V}^N=0.$$
    This together with $\|u\|_{W^{1,p}_V},\|u\|_{W^{1,N}_V}>0$ and $(M)$, yields
    $ \|u^-\|_{W^{1,p}_V}=\|u^-\|_{W^{1,N}_V}=0,~\text{that is},~\|u^-\|_{\mathbf{X}}=0.$
    Therefore, we get $u^-=0$ a.e. in $\mathbb{R}^N$ and hence $u=u^+\geq 0$ a.e. in $\mathbb{R}^N$. This completes the proof.
\end{proof}
\begin{lemma}\label{lem3.2}
 There exists a nonnegative function $e\in\mathbf{X}$, independent of $\lambda_1$, such that $\|e\|_{\mathbf{X}}\geq 2$ and  $J_{\lambda_1,\lambda_2}(e)<0$ hold true for all $\lambda_1,\lambda_2>0$.
\end{lemma}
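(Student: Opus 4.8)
The plan is to realise $e$ as a large dilation $e = t^\ast u_0$ of a single fixed profile and to show that the energy decays to $-\infty$ along the corresponding ray. First I would fix, once and for all and independently of every parameter, a nonnegative $u_0 \in \mathcal{C}_0^\infty(\mathbb{R}^N)$ with $u_0 \not\equiv 0$; then $u_0 \in \mathbf{X}$ with $\|u_0\|_{\mathbf{X}} > 0$. Using the explicit primitive coming from $(M)$, namely $\widehat{M}(t) = \frac{a}{k+1}\,t^{k+1}$, together with the $1$-homogeneity of the weighted norms, the first two terms of $J_{\lambda_1,\lambda_2}(t u_0)$ turn into the pure powers $\frac{a}{p(k+1)}\,\|u_0\|_{W^{1,p}_V}^{p(k+1)}\,t^{p(k+1)}$ and $\frac{a}{N(k+1)}\,\|u_0\|_{W^{1,N}_V}^{N(k+1)}\,t^{N(k+1)}$, with exponents satisfying $p(k+1) < N(k+1)$ because $p < N$.

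Next I would extract a superlinear lower bound for the nonlinear term from the Ambrosetti--Rabinowitz condition $(G2)$. Since $G(x,\cdot)$ is the primitive of $g(x,\cdot)$, the inequality $\mu G(x,u) \leq u\,g(x,u)$ implies that $u \mapsto G(x,u)\,u^{-\mu}$ is non-decreasing on $(0,\infty)$ for a.e.\ $x$, whence $G(x, t u_0(x)) \geq t^\mu\,G(x, u_0(x))$ for every $t \geq 1$ on $\{u_0 > 0\}$. Integrating against $|x|^{-\beta}$ gives
$$\int_{\mathbb{R}^N} \frac{G(x, t u_0)}{|x|^\beta}\,\mathrm{d}x \geq C_{u_0}\, t^\mu, \qquad C_{u_0} := \int_{\{u_0 > 0\}} \frac{G(x, u_0)}{|x|^\beta}\,\mathrm{d}x .$$
The constant $C_{u_0}$ is finite, since $u_0$ is bounded with compact support (so $G(\cdot,u_0)$ is bounded by $(G1)$) and $|x|^{-\beta}$ is locally integrable for $\beta < N$; it is strictly positive, since $(G2)$ forces $G(x,s) > 0$ for $s > 0$ while $\{u_0 > 0\}$ has positive measure.

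Finally, discarding the nonpositive term $-\frac{\lambda_1}{q}\int_{\mathbb{R}^N} h\,(t u_0)^q\,\mathrm{d}x$, I obtain for every $t \geq 1$ the estimate
$$J_{\lambda_1,\lambda_2}(t u_0) \leq \frac{a}{p(k+1)}\,\|u_0\|_{W^{1,p}_V}^{p(k+1)}\,t^{p(k+1)} + \frac{a}{N(k+1)}\,\|u_0\|_{W^{1,N}_V}^{N(k+1)}\,t^{N(k+1)} - \lambda_2\,C_{u_0}\, t^\mu ,$$
whose right-hand side no longer depends on $\lambda_1$. Because $(G2)$ guarantees $\mu > N(k+1) > p(k+1)$, the negative term $-\lambda_2 C_{u_0} t^\mu$ dominates as $t \to +\infty$, so the bound tends to $-\infty$. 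For the prescribed value of $\lambda_2 > 0$ I would then pick $t^\ast \geq \max\{1,\, 2/\|u_0\|_{\mathbf{X}}\}$ large enough to make this upper bound strictly negative and set $e := t^\ast u_0$; by construction $e \geq 0$, $\|e\|_{\mathbf{X}} = t^\ast\|u_0\|_{\mathbf{X}} \geq 2$, the threshold $t^\ast$ (hence $e$) depends on $\lambda_2$ but \emph{not} on $\lambda_1$, and $J_{\lambda_1,\lambda_2}(e) < 0$ for every $\lambda_1 > 0$. The only genuinely delicate points are the monotonicity of $G(x,\cdot)\,u^{-\mu}$ and the strict positivity of $C_{u_0}$; both follow from the elementary consequences of $(G2)$ above, so once the profile $u_0$ is fixed no serious obstacle remains, the interplay of exponents $p(k+1) < N(k+1) < \mu$ doing all the work.
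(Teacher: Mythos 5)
Your proof is correct and follows essentially the same route as the paper's: both scale a fixed nonnegative profile along a ray $t\mapsto tu$, derive $G(x,tu)\geq t^\mu G(x,u)$ for $t\geq 1$ from the Ambrosetti--Rabinowitz condition $(G2)$, and conclude from $\mu > N(k+1) > p(k+1)$ that the energy tends to $-\infty$ along the ray; the only cosmetic differences are your choice of a $\mathcal{C}_0^\infty$ profile and your explicit verification that $C_{u_0}$ is finite and strictly positive, details the paper leaves implicit after normalizing $\|u\|_{\mathbf{X}}=1$. Note also that, exactly as in the paper's own proof, your $e$ depends on $\lambda_2$ (though not on $\lambda_1$), which is the only reading under which either argument establishes the stated lemma.
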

\begin{proof}
    Choose $u\in\mathbf{X}\setminus\{0\}$, $u\geq 0$ with $\|u\|_{\mathbf{X}}=1$. Define the function $\psi:[1,\infty)\to\mathbb{R}$ by
    $$ \psi(t)=t^{-\mu}G(x,tu)-G(x,u),~\text{for all}~t\geq 1~\text{and}~u\in \mathbb{R}^+. $$
    By $(G2)$, we have 
    $ \psi^\prime(t)=t^{-\mu-1}\big( g(x,tu)tu-\mu G(x,tu)\big)\geq 0 ~\text{for all}~t\geq 1~\text{and}~u\in \mathbb{R}^+. $
It follows that $\psi(t)$ is an increasing function on $[1,\infty)$. Hence, we obtain $G(x,tu)\geq t^\mu G(x,t)~\text{for all}~t\geq 1~\text{and}~u\in \mathbb{R}^+.$ 
Choosing $t\geq2$ sufficiently large enough, we deduce from \eqref{eq2.2} that
 $$ J_{\lambda_1,\lambda_2}(tu)\leq \frac{2at^{N(k+1)}}{p(k+1)}-\lambda_2 t^\mu\int_{\mathbb{R}^N}\cfrac{G(x,u)}{|x|^\beta}~\mathrm{d}x\to -\infty~\text{as}~t\to\infty,~\text{since}~N(k+1)<\mu.$$ 
 The proof is completed by taking $e=tu$ with $t$ sufficiently large.
\end{proof}
\begin{lemma}\label{lem3.3}
 There exists $\rho\in(0,1]$ and two positive constants $\lambda_1^\ast$ and $\kappa$, depending upon $\rho$, such that $J_{\lambda_1,\lambda_2}(u)\geq \kappa$ for all $u\in \mathbf{X}$ with $\|u\|_{\mathbf{X}}=\rho$, and for all $\lambda_1\in(0,\lambda_1^\ast]$ and $\lambda_2>0$.
\end{lemma}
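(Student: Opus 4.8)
The starting point is to make the Kirchhoff terms explicit. Since $(M)$ gives $M(s)=as^k$, we have $\widehat M(t)=\frac{a}{k+1}t^{k+1}$, so on the sphere $\|u\|_{\mathbf X}=\rho$ the energy reads
\[
J_{\lambda_1,\lambda_2}(u)=\frac{a}{p(k+1)}\|u\|_{W^{1,p}_V}^{p(k+1)}+\frac{a}{N(k+1)}\|u\|_{W^{1,N}_V}^{N(k+1)}-\frac{\lambda_1}{q}\int_{\mathbb R^N}h(x)(u^+)^q\,\mathrm dx-\lambda_2\int_{\mathbb R^N}\frac{G(x,u)}{|x|^\beta}\,\mathrm dx,
\]
where only $u^+$ enters the last two terms by $(G1)$. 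First I would bound the two positive terms from below: since $\|u\|_{W^{1,p}_V}+\|u\|_{W^{1,N}_V}=\rho\le 1$, at least one of the two norms is $\ge\rho/2$, and using $p(k+1)<N(k+1)$ together with $\rho\le1$ this gives a lower bound $C_0\,\rho^{N(k+1)}$ with a fixed $C_0>0$ (when the $p$-term dominates it only adds a lower-order, hence larger, positive contribution $\sim\rho^{p(k+1)}$). The subcritical term is then disposed of by Lemma \ref{lem2.7} and the continuous embedding $\mathbf X\hookrightarrow L^N(\mathbb R^N)$: one has $\int_{\mathbb R^N}h(x)(u^+)^q\,\mathrm dx\le\|h\|_\eta\|u\|_N^q\le\|h\|_\eta\,\mathcal S_N^{-q}\rho^q$, so it is at most $C_1\lambda_1\rho^q$.

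The delicate term is $\int_{\mathbb R^N}G(x,u)/|x|^\beta\,\mathrm dx$, which I would split over $\{0\le u\le\delta\}$ and $\{u>\delta\}$. On the near-zero region I would invoke $(G3)$: there are $\sigma<\mathcal S_{N,\beta}$ and $\delta>0$ with $G(x,t)\le\frac{\widehat M(1)\sigma}{N}t^{N(k+1)}$ for $0\le t\le\delta$; estimating the resulting integral through the embedding $\mathbf X\hookrightarrow L^{N(k+1)}(\mathbb R^N,|x|^{-\beta})$ of Lemma \ref{lem2.8} produces a contribution of the \emph{same} order $\rho^{N(k+1)}$ as the Kirchhoff term, but whose coefficient is governed by $\sigma<\mathcal S_{N,\beta}$, which is exactly what is needed to keep the net coefficient of $\rho^{N(k+1)}$ positive. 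On $\{u>\delta\}$ I would use the growth $(G1)$, namely $G(x,u)\le\frac{b_1}{N(k+1)}u^{N(k+1)}+b_2\,u\,\Phi(\alpha_0u^{N'})$, and exploit $u/\delta>1$ to raise the polynomial prefactor to a power $s>N(k+1)$; a Hölder splitting then separates a high power of $u$ (estimated in $L^{s}(\mathbb R^N,|x|^{-\beta})$ by Lemma \ref{lem2.8}, giving $\rho^{s}$) from $\Phi(\alpha_0u^{N'})^\gamma$, and Corollary \ref{cor2.9} together with the singular Trudinger--Moser inequality (Lemma \ref{lem2.11} and Remark \ref{rem2.12}) bounds the latter uniformly once $\rho$ is so small that $\gamma\alpha_0\|u\|_{W^{1,N}}^{N'}\le(1-\tfrac{\beta}{N})\alpha_N$. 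Thus this part is $O(\rho^{s})$ with $s>N(k+1)$, genuinely of higher order.

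Collecting the estimates yields, for $\|u\|_{\mathbf X}=\rho$, a bound of the shape $J_{\lambda_1,\lambda_2}(u)\ge C_2\,\rho^{N(k+1)}-C\rho^{s}-C_1\lambda_1\rho^q$, where $C_2>0$ is the net leading coefficient secured by $(G3)$, and $s>N(k+1)>p(k+1)>q$. I would then first fix $\rho\in(0,1]$ small enough that the Trudinger--Moser threshold above holds and that $C\rho^{s}\le\frac12 C_2\rho^{N(k+1)}$. Since $q<N(k+1)$, the term $C_1\lambda_1\rho^q$ is the one of lowest order in $\rho$, so finally, with $\rho$ fixed, I would choose $\lambda_1^\ast>0$ (depending on $\rho$, and hence on the remaining data) small enough that $C_1\lambda_1\rho^q\le\frac14 C_2\rho^{N(k+1)}$ for $\lambda_1\le\lambda_1^\ast$, leaving $J_{\lambda_1,\lambda_2}(u)\ge\kappa:=\frac14 C_2\rho^{N(k+1)}>0$.

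The main obstacle is the singular exponential term: one must simultaneously extract its leading $\rho^{N(k+1)}$ part and certify, via the sharp bound $\sigma<\mathcal S_{N,\beta}$ of $(G3)$, that it does not overwhelm the Kirchhoff term at that order, while showing the supercritical remainder is of strictly higher order, which forces the combined use of the power boost on $\{u>\delta\}$, Corollary \ref{cor2.9}, and the singular Trudinger--Moser inequality, the last valid only after $\rho$ has been taken small. The reversed ordering $q<p(k+1)<N(k+1)$ is the structural reason why $\kappa$ and $\lambda_1^\ast$ must depend on $\rho$ and why $\lambda_1$ has to be fixed \emph{after} $\rho$.
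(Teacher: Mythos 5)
Your decomposition and toolkit are the same as the paper's proof: split $G$ via $(G3)$ near zero and via $(G1)$--$(G2)$ with a power boost away from zero, estimate the boosted term by H\"older, Corollary \ref{cor2.9}, Lemma \ref{lem2.8} and the singular Trudinger--Moser inequality (Lemma \ref{lem2.11}) after $\rho$ is taken small, handle the $h$-term through $\|h\|_\eta\mathcal S_N^{-q}\rho^q$, and finally fix $\rho$ first and $\lambda_1^\ast$ after. The genuine gap is in the sentence asserting that $\sigma<\mathcal S_{N,\beta}$ is ``exactly what is needed to keep the net coefficient of $\rho^{N(k+1)}$ positive.'' The critical-order negative term carries the factor $\lambda_2$: it is $\lambda_2\,\frac{a\sigma}{N(k+1)}\,\mathcal S^{-N(k+1)}_{N(k+1),\beta}\,\rho^{N(k+1)}$, while your $\sigma$ is fixed by $(G3)$ once and for all, independently of $\lambda_2$. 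Hence your net coefficient $C_2$ is positive only for $\lambda_2$ below an explicit threshold, and the claimed conclusion ``for all $\lambda_2>0$'' does not follow. This cannot be repaired within your quantifier structure: for any fixed nonnegative $u\neq 0$ with $\|u\|_{\mathbf X}=\rho$, $(G2)$ gives $\int_{\mathbb R^N}G(x,u)|x|^{-\beta}\,\mathrm dx>0$, so $J_{\lambda_1,\lambda_2}(u)\to-\infty$ as $\lambda_2\to\infty$; therefore no $\rho,\kappa$ chosen independently of $\lambda_2$ can work (note also that your higher-order constant $C$ itself carries a factor $\lambda_2$). Any correct proof must let $\rho,\kappa,\lambda_1^\ast$ depend on $\lambda_2$, and the paper does exactly this: it calibrates the $(G3)$ smallness parameter against $\lambda_2$, choosing $\epsilon=\epsilon(\lambda_2)$ so that $C_\epsilon=(\mathcal S_{N,\beta}-\epsilon)\mathcal S^{-N(k+1)}_{N(k+1),\beta}=2^{-N(k+1)}/\lambda_2$, whence $\lambda_2 C_\epsilon=2^{-N(k+1)}$ is strictly below the Kirchhoff constant $2^{1-N(k+1)}$ uniformly in $\lambda_2$, at the price that $\delta$, $\rho$, $\kappa$ and $\lambda_1^\ast$ all inherit a dependence on $\lambda_2$ (which is how the lemma has to be read).

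A second, related inaccuracy: even for bounded $\lambda_2$, the inequality $\sigma<\mathcal S_{N,\beta}$ by itself does not yield positivity of $C_2$. The constant $\mathcal S_{N,\beta}$ of \eqref{eq2.1} is attached to the embedding $\mathbf X\hookrightarrow L^{N}(\mathbb R^N,|x|^{-\beta})$, whereas the quantity actually produced by your estimate is $\mathcal S^{-N(k+1)}_{N(k+1),\beta}$, coming from $\mathbf X\hookrightarrow L^{N(k+1)}(\mathbb R^N,|x|^{-\beta})$, and it must be compared with the factor $2^{1-N(k+1)}$ generated when $\|u\|^{N(k+1)}_{W^{1,p}_V}+\|u\|^{N(k+1)}_{W^{1,N}_V}$ is converted into $\|u\|^{N(k+1)}_{\mathbf X}$; there is no a priori relation among these three numbers that makes $\sigma<\mathcal S_{N,\beta}$ sufficient. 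The positivity of the leading coefficient is manufactured by making the coefficient in the near-zero bound small enough relative to $\lambda_2$ --- this is precisely the role of the paper's choice of $C_\epsilon$ --- rather than deduced from the bare strict inequality in $(G3)$. The remainder of your argument is sound and matches the paper's, with the inessential difference that the paper boosts the power on $\{u>\delta\}$ to exactly $N(k+1)+1$ instead of a generic $s>N(k+1)$.
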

\begin{proof}
 By $(G3)$, for every $\epsilon>0$ and $\delta>0$ such that 
 \begin{equation}\label{eq3.1}
  G(x,t)\leq \frac{a}{N(k+1)}(\mathcal{S}_{N,\beta}-\epsilon)|t|^{N(k+1)},~\text{for any}~|t|\leq\delta,~x\in \mathbb{R}^N.   
 \end{equation}
 Furthermore, by $(G1),(G2)$ and remark \ref{rem2.10}, we have 
 \begin{equation}\label{eq3.2}
     G(x,t)\leq C_\delta |t|^{N(k+1)+1}\Phi(\alpha_0|t|^{N^\prime}),~\text{for any}~|t|\geq\delta,~x\in \mathbb{R}^N,
 \end{equation}
 where $$C_\delta=\frac{b_1}{\delta N(k+1)\Phi(\alpha_0\delta^{N^\prime})}+\frac{b_2}{\delta^{N(k+1)}} .$$
 Combining \eqref{eq3.1} and \eqref{eq3.2} together, we have for all $(x,t)\in \mathbb{R}^N\times \mathbb{R}$,
 \begin{equation}\label{eq3.3}
  G(x,t)\leq \frac{a}{N(k+1)}(\mathcal{S}_{N,\beta}-\epsilon)|t|^{N(k+1)}+ C_\delta |t|^{N(k+1)+1}\Phi(\alpha_0|t|^{N^\prime}).   
 \end{equation}
 Let $\delta\in(0,1]$ is sufficiently small enough and there holds $0<\|u\|_{\mathbf{X}}\leq\delta$. Choose $r>1$ satisfying $\frac{1}{r}+\frac{1}{r^\prime}=1$. Now, applying  H\"older's inequality, Lemma \ref{lem2.8} and Corollary \ref{cor2.9}, we have the following estimates:

\begin{align}\label{eq3.4}\int_{\mathbb{R}^N}\cfrac{|u|^{N(k+1)+1}\Phi\big(\alpha_0|u|^{N^\prime}\big)}{|x|^\beta}~\mathrm{d}x& \leq \Bigg\| \frac{|u|^{N(k+1)+1}}{|x|^{\frac{\beta}{r^\prime}}}\Bigg\|_{r^\prime}\Bigg\|\frac{\Phi\big(\alpha_0|u|^{N^\prime}\big)}{|x|^{\frac{\beta}{r}}}\Bigg\|_{r} \notag\\ 
&\leq \|u\|^{N(k+1)+1}_{L^{(N(k+1)+1)r^\prime}(\mathbb{R}^N,|x|^{-\beta})} \Bigg[\int_{\mathbb{R}^N}\cfrac{\Phi\big(r\alpha_0|u|^{N^\prime}\big)}{|x|^\beta}~\mathrm{d}x\Bigg]^{\frac{1}{r}}\notag\\
 &\leq \mathcal{S}^{-(N(k+1)+1)}_{(N(k+1)+1)r^\prime,\beta} \Bigg[\int_{\mathbb{R}^N}\cfrac{\Phi\big(r\alpha_0\|u\|_{\mathbf{X}}^{N^\prime}|\widetilde{u}|^{N^\prime}\big)}{|x|^\beta}~\mathrm{d}x\Bigg]^{\frac{1}{r}}\|u\|_{\mathbf{X}}^{N(k+1)+1},
\end{align}
 where $\widetilde{u}=u/\|u\|_{\mathbf{X}}$. Since $\|u\|_{\mathbf{X}}$ is very small, choose $r>1$ close to 1 and $\alpha>\alpha_0$ close to $\alpha_0$ such that $r\alpha\|u\|_{\mathbf{X}}^{N^\prime}\leq\big(1-\frac{\beta}{N}\big)\alpha_N$ holds. Take $\tau>0$ with $\tau\leq V_0$, then it is easy to see that $\|\widetilde{u}\|_{W^{1,N}_\tau}\leq \|\widetilde{u}\|_{\mathbf{X}}=1$. In view of remark \ref{rem2.10}, Lemma \ref{lem2.11} and \eqref{eq3.4}, there exists $C_1>0$ such that for all $u\in \mathbf{X}$ with $0<\|u\|_{\mathbf{X}}\leq\delta$, we have
\begin{equation}\label{eq3.5}
\int_{\mathbb{R}^N}\cfrac{|u|^{N(k+1)+1}\Phi\big(\alpha_0|u|^{N^\prime}\big)}{|x|^\beta}~\mathrm{d}x\leq C_1\mathcal{S}^{-(N(k+1)+1)}_{(N(k+1)+1)r^\prime,\beta}  \|u\|_{\mathbf{X}}^{N(k+1)+1}. 
\end{equation}
Hence, from \eqref{eq3.3} and \eqref{eq3.5}, we obtain for all $u\in \mathbf{X}$ with $\|u\|_{\mathbf{X}}\leq\delta$ sufficiently small enough,
\begin{align}\label{eq3.6} J_{\lambda_1,\lambda_2}(u)&\geq \frac{a}{N(k+1)}\big(\|u\|_{W^{1,p}_V}^{N(k+1)}+\|u\|_{W^{1,N}_V}^{N(k+1)}\big)-\frac{\lambda_1}{q}\mathcal{S}^{-q}_N\|h\|_\eta\|u\|^q_{\mathbf{X}}\notag\\
&\qquad-\frac{\lambda_2 a(\mathcal{S}_{N,\beta}-\epsilon)}{N(k+1)}\mathcal{S}^{-(N(k+1))}_{N(k+1),\beta}\|u\|^{N(k+1)}_{\mathbf{X}}-\lambda_2 C_1C_\delta\mathcal{S}^{-(N(k+1)+1)}_{(N(k+1)+1)r^\prime,\beta}  \|u\|_{\mathbf{X}}^{N(k+1)+1}\notag\\
     &\geq \frac{a(2^{1-N(k+1)} -\lambda_2C_\epsilon)}{N(k+1)} \|u\|^{N(k+1)}_{\mathbf{X}}-\frac{\lambda_1}{q}C_q\|h\|_\eta\|u\|^q_{\mathbf{X}}
    -C_{\lambda_2}\|u\|_{\mathbf{X}}^{N(k+1)+1},   
\end{align}
where $C_\epsilon=(\mathcal{S}_{N,\beta}-\epsilon)\mathcal{S}^{-(N(k+1))}_{N(k+1),\beta}$, $C_q=\mathcal{S}^{-q}_N$ and $C_{\lambda_2}=\lambda_2 C_1C_\delta\mathcal{S}^{-(N(k+1)+1)}_{(N(k+1)+1)r^\prime,\beta}$. Let us choose $C_\epsilon=2^{-N(k+1)}/\lambda_2$ and define the function
$$ f(\ell)=\frac{a}{2^{N(k+1)+1}N(k+1)}\ell^{N(k+1)}-C_{\lambda_2}\ell^{N(k+1)+1}, ~\text{where}~\ell\in[0,\delta]. $$
It is easy to see that $f$ admits a positive maximum $\kappa$ in $[0,\delta]$ at a point $\rho\in(0,\delta]$. Further, for all $u\in \mathbf{X}$ with $\|u\|_{\mathbf{X}}=\rho$, we have 
$$ J_{\lambda_1,\lambda_2}(u)\geq \frac{a}{2^{N(k+1)}N(k+1)}\rho^{N(k+1)} -\frac{\lambda_1}{q}C_q\|h\|_\eta\rho^q
    -C_{\lambda_2}\rho^{N(k+1)+1}\geq f(\rho)=\kappa>0,$$
for all $\lambda_1\in(0,\lambda_1^\ast]$, where $\lambda_1^\ast=\frac{aq}{2^{N(k+1)+1}N(k+1)C_q\|h\|_\eta}\rho^{N(k+1)-q}$. This completes the proof. 
\end{proof}
\begin{lemma}\label{lem3.4}
    Let $\rho\in(0,1]$ as in Lemma \ref{lem3.3}, then for all $\lambda_1\in(0,\lambda_1^\ast]$ as in Lemma \ref{lem3.3} and for all $\lambda_2>0$, there exist a sequence $\{u_n\}_n$ of nonnegative functions and some nonnegative function $u_{\lambda_1,\lambda_2}$ in $\overline{B}_\rho$ such that for all $n\in \mathbb{N}$,
    \begin{equation}\label{eq3.7}
    \begin{split}
     \|u_n\|_{\mathbf{X}}<\rho,~m_{\lambda_1,\lambda_2}\leq J_{\lambda_1,\lambda_2}(u_n)\leq m_{\lambda_1,\lambda_2}+\frac{1}{n},~u_n\rightharpoonup u_{\lambda_1,\lambda_2}~\text{in}~\mathbf{X},\\
     u_n\to u_{\lambda_1,\lambda_2}~\text{a.e. in}~\mathbb{R}^N~\text{and}~J^\prime_{\lambda_1,\lambda_2}(u_n)\to 0~\text{in}~\mathbf{X}^\ast~\text{as}~n\to\infty,\\
    \end{split}
\end{equation}
where $$ m_{\lambda_1,\lambda_2}=\inf\{J_{\lambda_1,\lambda_2}(u):u\in\overline{B}_\rho\}<0.$$    
\end{lemma}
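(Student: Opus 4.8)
The plan is to realize $u_{\lambda_1,\lambda_2}$ as the weak limit of an Ekeland minimizing sequence for $J_{\lambda_1,\lambda_2}$ on the closed ball $\overline{B}_\rho$, which is a complete metric space. The first task is to check that $m_{\lambda_1,\lambda_2}<0$. To this end I fix $u_0\in\mathbf{X}$ with $u_0\geq 0$, $u_0\neq 0$ and $\|u_0\|_{\mathbf{X}}=1$, and evaluate $J_{\lambda_1,\lambda_2}$ along the ray $t\mapsto tu_0$. Since $\widehat{M}(s)=\frac{a}{k+1}s^{k+1}$ and $G\geq 0$, one gets for small $t>0$
\begin{equation*}
J_{\lambda_1,\lambda_2}(tu_0)\leq C_1 t^{p(k+1)}+C_2 t^{N(k+1)}-\frac{\lambda_1 t^q}{q}\int_{\mathbb{R}^N}h(x)u_0^q~\mathrm{d}x,
\end{equation*}
with $\int_{\mathbb{R}^N}h u_0^q>0$ because $h>0$. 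As $q<p(k+1)$ by $(M)$ and $q<N<N(k+1)$, the negative $t^q$ term dominates as $t\to 0^+$, so $J_{\lambda_1,\lambda_2}(tu_0)<0$ for $t$ small; choosing in addition $t\leq\rho$ places $tu_0\in\overline{B}_\rho$ and yields $m_{\lambda_1,\lambda_2}<0$.

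Next I apply Ekeland's variational principle on $\overline{B}_\rho$ with $\varepsilon=1/n$ to produce $w_n\in\overline{B}_\rho$ satisfying $m_{\lambda_1,\lambda_2}\leq J_{\lambda_1,\lambda_2}(w_n)\leq m_{\lambda_1,\lambda_2}+\frac1n$ and $J_{\lambda_1,\lambda_2}(w_n)\leq J_{\lambda_1,\lambda_2}(v)+\frac1n\|v-w_n\|_{\mathbf{X}}$ for every $v\in\overline{B}_\rho$. By Lemma \ref{lem3.3}, $J_{\lambda_1,\lambda_2}\geq\kappa>0$ on $\partial B_\rho$, while $J_{\lambda_1,\lambda_2}(w_n)\to m_{\lambda_1,\lambda_2}<0$; hence $\|w_n\|_{\mathbf{X}}<\rho$ for all large $n$ (after discarding finitely many indices and relabelling). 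Being interior points, for any $\phi\in\mathbf{X}$ the competitor $v=w_n+t\phi$ lies in $\overline{B}_\rho$ for small $t>0$, so the Ekeland inequality gives $\langle J'_{\lambda_1,\lambda_2}(w_n),\phi\rangle\geq -\frac1n\|\phi\|_{\mathbf{X}}$; replacing $\phi$ by $-\phi$ then yields $\|J'_{\lambda_1,\lambda_2}(w_n)\|_{\mathbf{X}^\ast}\leq\frac1n\to 0$.

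It remains to upgrade $\{w_n\}$ to a nonnegative sequence. Testing $J'_{\lambda_1,\lambda_2}(w_n)$ with $w_n^-$ and using $(G1)$ (so $g(x,w_n)$ and $(w_n^+)^{q-1}$ do not see $w_n^-$), together with $\langle w_n,w_n^-\rangle_{r,V}=-\|w_n^-\|_{W^{1,r}_V}^r$, gives
\begin{equation*}
a\|w_n\|_{W^{1,p}_V}^{pk}\|w_n^-\|_{W^{1,p}_V}^p+a\|w_n\|_{W^{1,N}_V}^{Nk}\|w_n^-\|_{W^{1,N}_V}^N=-\langle J'_{\lambda_1,\lambda_2}(w_n),w_n^-\rangle\leq\tfrac1n\|w_n^-\|_{\mathbf{X}}.
\end{equation*}
The degenerate coefficient is not an obstruction here: since $\|w_n^-\|_{W^{1,r}_V}\leq\|w_n\|_{W^{1,r}_V}$, the left-hand side is bounded below by $a\|w_n^-\|_{W^{1,p}_V}^{p(k+1)}+a\|w_n^-\|_{W^{1,N}_V}^{N(k+1)}$, while the right-hand side tends to $0$; hence $\|w_n^-\|_{\mathbf{X}}\to 0$. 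I then set $u_n=w_n^+\geq 0$. Because $(\cdot)^+$ does not increase the $W^{1,r}_V$-norms and leaves the reward terms $\int h(u^+)^q$ and $\int G(x,u)|x|^{-\beta}$ unchanged, one has $\|u_n\|_{\mathbf{X}}\leq\|w_n\|_{\mathbf{X}}<\rho$ and $m_{\lambda_1,\lambda_2}\leq J_{\lambda_1,\lambda_2}(u_n)\leq J_{\lambda_1,\lambda_2}(w_n)\leq m_{\lambda_1,\lambda_2}+\frac1n$. Moreover $J'_{\lambda_1,\lambda_2}(u_n)-J'_{\lambda_1,\lambda_2}(w_n)$ involves only the $(p,N)$-Laplacian and potential contributions on $\{w_n<0\}$ (the nonlinear terms coincide, as $g$ and $(\cdot^+)^{q-1}$ annihilate the negative part), and these are controlled by $\|w_n^-\|_{\mathbf{X}}\to 0$; hence $J'_{\lambda_1,\lambda_2}(u_n)\to 0$ in $\mathbf{X}^\ast$.

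Finally, $\{u_n\}\subset\overline{B}_\rho$ is bounded and $\mathbf{X}$ is reflexive, so along a subsequence $u_n\rightharpoonup u_{\lambda_1,\lambda_2}$ in $\mathbf{X}$ with $u_{\lambda_1,\lambda_2}\in\overline{B}_\rho$ (the ball is weakly closed); by the compact embeddings of Corollary \ref{cor2.6} and Lemma \ref{lem2.8} a further subsequence converges a.e., so $u_n\to u_{\lambda_1,\lambda_2}$ a.e. and the limit is nonnegative. The main delicate point is the interplay between nonnegativity and the degenerate Kirchhoff term: a direct test with $w_n^-$ leaves the possibly vanishing factor $M(\|w_n\|_{W^{1,r}_V}^r)$ in front, which is circumvented precisely by the monotonicity $\|w_n^-\|_{W^{1,r}_V}\leq\|w_n\|_{W^{1,r}_V}$; the verification that passing to $w_n^+$ preserves the near-critical behaviour $J'_{\lambda_1,\lambda_2}(u_n)\to 0$ is the other step requiring care.
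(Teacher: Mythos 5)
Your proposal is correct, and while its skeleton (Ekeland's principle on $\overline{B}_\rho$, interior points via the barrier $\kappa>0$ on $\partial B_\rho$ from Lemma \ref{lem3.3}, then elimination of the negative part) coincides with the paper's, your handling of the key nonnegativity step is genuinely different and in fact more robust. The paper tests $J'_{\lambda_1,\lambda_2}(u_n)$ with $u_n^-$ but keeps the degenerate Kirchhoff coefficients in the limiting form $M(l_p^p)$, $M(l_N^N)$ with $l_r=\lim_n\|u_n\|_{W^{1,r}_V}$; it must then run a three-case analysis (ruling out $l_p=l_N=0$ through the contradiction $0=J_{\lambda_1,\lambda_2}(0)=m_{\lambda_1,\lambda_2}<0$, and in the remaining cases extracting $u_n^-\to 0$ only in the space whose coefficient $M(l_r^r)$ is positive), and it closes with the unproved assertion ``we can assume $u_n=u_n^+$''. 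Your monotonicity observation $\|w_n^-\|_{W^{1,r}_V}\le\|w_n\|_{W^{1,r}_V}$, which gives $M\big(\|w_n\|^r_{W^{1,r}_V}\big)\|w_n^-\|^r_{W^{1,r}_V}\ge a\|w_n^-\|^{r(k+1)}_{W^{1,r}_V}$, neutralizes the degeneracy in one stroke: it yields $\|w_n^-\|_{\mathbf{X}}\to 0$ in both components simultaneously, with no case distinction on $l_p,l_N$ and no appeal to $m_{\lambda_1,\lambda_2}<0$ at this stage. You also verify exactly what the paper glosses over: that the replacement $u_n=w_n^+$ preserves the Ekeland data, since the $W^{1,r}_V$-norms and hence the increasing function $\widehat{M}$ do not increase, the $h$- and $G$-terms are unchanged because $g$, $G$ and $(\cdot^+)^{q-1}$ annihilate the negative part, and $\|J'_{\lambda_1,\lambda_2}(u_n)-J'_{\lambda_1,\lambda_2}(w_n)\|_{\mathbf{X}^\ast}$ is controlled by powers of $\|w_n^-\|_{W^{1,r}_V}\to 0$ (note in particular that $J_{\lambda_1,\lambda_2}(u_n)\le J_{\lambda_1,\lambda_2}(w_n)\le m_{\lambda_1,\lambda_2}+\frac1n$ holds exactly, not merely asymptotically). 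The one point you leave implicit is that Ekeland's principle requires $J_{\lambda_1,\lambda_2}$ to be bounded below on $\overline{B}_\rho$, i.e. $m_{\lambda_1,\lambda_2}>-\infty$; this is true but not free, as it rests on the singular Trudinger--Moser estimate \eqref{eq3.6} established in the proof of Lemma \ref{lem3.3}, so one sentence invoking that estimate should be added.
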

\begin{proof}
Let $v\in\mathbf{X}\setminus\{0\}$, $v\geq 0$, with $\|v\|_{\mathbf{X}}=1$. By $(G2)$, we can deduce that $G(x,t)\geq C_2t^\mu$ for some constant $C_2>0$, for a.e. $x\in \mathbb{R}^N$ and for all $t>0$. Hence, for all $\ell\in (0,1]$ sufficiently small enough, we have 
\begin{align*}J_{\lambda_1,\lambda_2}(\ell v)&\leq \frac{a\ell^{p(k+1)}}{p(k+1)}\|v\|_{W^{1,p}_V}^{p(k+1)}+\frac{a\ell^{N(k+1)}}{N(k+1)}\|v\|_{W^{1,N}_V}^{N(k+1)}-\frac{\lambda_1\ell^q}{q}\|v\|^q_{q,h}-\lambda_2 C_2\ell^\mu\int_{\mathbb{R}^N}\frac{|v|^\mu}{|x|^\beta}~\mathrm{d}x\\
&\leq \frac{a\ell^{p(k+1)}}{p(k+1)}\|v\|_{\mathbf{X}}^{p(k+1)}+\frac{a\ell^{N(k+1)}}{N(k+1)}\|v\|_{\mathbf{X}}^{N(k+1)}-\frac{\lambda_1\ell^q}{q}\|v\|^q_{q,h}\\
&\leq \frac{2a\ell^{p(k+1)}}{p(k+1)}-\frac{\lambda_1\ell^q}{q}\|v\|^q_{q,h} <0,~\text{since}~q<p(k+1).\end{align*}
This together with \eqref{eq3.6}, yields 
$$-\infty<m_{\lambda_1,\lambda_2}=\inf _{\|u\|\leq \rho}J_{\lambda_1,\lambda_2}(u)\leq \inf _{\ell\in(0,\rho]}J_{\lambda_1,\lambda_2}(\ell v)<0. $$
It follows that 
\begin{equation}\label{eq3.8}
  -\infty< m_{\lambda_1,\lambda_2}=\inf\{J_{\lambda_1,\lambda_2}(u):u\in\overline{B}_\rho\}<0. 
\end{equation}
Thus, we infer that the functional $J_{\lambda_1,\lambda_2}$ is bounded from below and of class $\mathcal{C}^1$ on $\overline{B}_\rho$. Also, we know that $\overline{B}_\rho$ is a complete metric space with the metric given by the norm of $\mathbf{X}$. By Lemma \ref{lem3.3}, we also have 
\begin{equation}\label{eq3.9}
 \inf _{\partial B_\rho}J_{\lambda_1,\lambda_2}(u)\geq \kappa>0 . 
\end{equation}
In view of \eqref{eq3.8} and \eqref{eq3.9}, for $n$ large enough, we can choose
\begin{equation}\label{eq3.10}
    \frac{1}{n}\in \big(0, \inf _{\partial B_\rho}J_{\lambda_1,\lambda_2}(u)-\inf _{\overline{B}_\rho }J_{\lambda_1,\lambda_2}(u)\big).
\end{equation}
Applying Ekeland variational principle to the functional $J_{\lambda_1,\lambda_2}:\overline{B}_\rho\to\mathbb{R}$, we can find a sequence $\{u_n\}_n\subset \overline{B}_\rho $ such that
\begin{equation}\label{eq3.11}
  m_{\lambda_1,\lambda_2}\leq J_{\lambda_1,\lambda_2}(u_n)\leq m_{\lambda_1,\lambda_2}+\frac{1}{n}~\text{and} ~J_{\lambda_1,\lambda_2}(u_n)\leq J_{\lambda_1,\lambda_2}(u)+\frac{1}{n}\|u_n-u\|_{\mathbf{X}}, 
\end{equation}
for all $u\in\overline{B}_\rho$, with $u\neq u_n$ for each $n\in\mathbb{N}$. Now, it follows from \eqref{eq3.10} and \eqref{eq3.11} that
$$ J_{\lambda_1,\lambda_2}(u_n)\leq m_{\lambda_1,\lambda_2}+\frac{1}{n}=\inf _{\overline{B}_\rho }J_{\lambda_1,\lambda_2}(u)+\frac{1}{n}<\inf _{\partial B_\rho}J_{\lambda_1,\lambda_2}(u).$$
From this, we conclude that $\{u_n\}_n\subset B_\rho $, that is, $\|u_n\|_{\mathbf{X}}<\rho$, for all $n\in\mathbb{N}$. Let for all $\varphi\in B_1$, choose $t>0$ sufficiently small such that $u_n+t\varphi\in\overline{B}_\rho $ holds, for each $n\in\mathbb{N}$. Next, from \eqref{eq3.11}, we have
$$\langle{J^\prime_{\lambda_1,\lambda_2}(u_n),\varphi}\rangle=\lim_{t\to 0^+}\frac{J_{\lambda_1,\lambda_2}(u_n+t\varphi)-J_{\lambda_1,\lambda_2}(u_n)}{t}\geq -\frac{1}{n},~\text{for all}~\varphi\in B_1. $$
Due to the arbitrariness of $\varphi\in B_1$, we conclude that $|\langle{J^\prime_{\lambda_1,\lambda_2}(u_n),\varphi}\rangle|\leq \frac{1}{n}$, for all $\varphi\in B_1$. This together with \eqref{eq3.11}, we obtain 
\begin{equation}\label{eq3.12}
  J_{\lambda_1,\lambda_2}(u_n) \to m_{\lambda_1,\lambda_2}~\text{and}~ J^\prime_{\lambda_1,\lambda_2}(u_n)\to 0~\text{in}~\mathbf{X}^\ast~\text{as}~n\to\infty.
\end{equation}
Furthermore, since $\{u_n\}_n$ is bounded, there exists a subsequence still denoted by the same symbol and $u_{\lambda_1,\lambda_2},u_1,u_2\in \overline{B}_\rho$ such that $u_n\rightharpoonup u_{\lambda_1,\lambda_2},u^+_n\rightharpoonup u_1$ and $u^-_n\rightharpoonup u_2$ in $\mathbf{X}$ as $n\to\infty$. By Corollary \ref{cor2.6}, for any $\tau\in[p,p^\ast)\cup[N,\infty)$, we have $u_n\to u_{\lambda_1,\lambda_2},u^+_n\to u_1$ and $u^-_n\to u_2$ in $L^\tau(\mathbb{R}^N)$ as $n\to\infty$. Since the maps $u\mapsto u^{\pm}$ are continuous from $L^\tau(\mathbb{R}^N)$ into itself, therefore, we obtain $u_1=u^+_{\lambda_1,\lambda_2}$ and $u_2=u^-_{\lambda_1,\lambda_2}$. It follows that $u_n\to u_{\lambda_1,\lambda_2},u^+_n\to u^+_{\lambda_1,\lambda_2}$ and $u^-_n\to u^-_{\lambda_1,\lambda_2}$ a.e. in $\mathbb{R}^N$ as $n\to\infty$. In view of remark \ref{remark2.4}, up to a subsequence, we can assume that $\|u_n\|_{W^{1,p}_V}\to l_p$ and $\|u_n\|_{W^{1,N}_V}\to l_N$ as $n\to\infty$, where $l_p,l_N\geq 0$. Moreover, from \eqref{eq3.12}, we have $\langle{J^\prime_{\lambda_1,\lambda_2}(u_n),u^-_n}\rangle\to 0$ as $n\to\infty$. Hence, we obtain from \eqref{equation2.33} that
\begin{equation}\label{eq3.13}
  M(l_p^p)\big\|u_n^-\|_{W^{1,p}_V}^p+M( l_N^N)\big\|u_n^-\|_{W^{1,N}_V}^N=o_n(1)~\text{as} ~n\to\infty. 
\end{equation}
Now, we have the following possibilities based on $l_p$ and $l_N$.\\
\textit{Case-1}: Let $l_p=0$ and $l_N=0$.
This situation cannot occur. Indeed, if  $l_p=0$ and $l_N=0$, then $u_n\to 0$ in $\mathbf{X}$ as $n\to\infty$. Hence, we obtain from \eqref{eq3.12} that, $0=J_{\lambda_1,\lambda_2}(0) =m_{\lambda_1,\lambda_2}<0$, which is a contradiction.\\ 
\textit{Case-2}: Let either $l_p=0$ and $l_N\neq 0$ or $l_p\neq0$ and $l_N= 0$. We only study the situation when $l_p=0$ and $l_N\neq 0$. In this case, we obtain from \eqref{eq3.13} and $(M)$ that $u_n^-\to 0$ in $W^{1,N}_V(\mathbb{R}^N)$ as $n\to\infty$. Thus, by Lemma \ref{lem2.2}, we conclude that $u_n^-\to 0$ in $L^N(\mathbb{R}^N)$ as $n\to\infty$. It follows immediately that $u^-_{\lambda_1,\lambda_2}=0$ a.e. in $\mathbb{R}^N$ and hence $u_{\lambda_1,\lambda_2}=u^+_{\lambda_1,\lambda_2}\geq 0$ in $\mathbb{R}^N$. Consequently, we can assume that $u_n=u^+_n\geq 0$ in $\mathbb{R}^N$, since $u_n^-\to 0$ in $L^N(\mathbb{R}^N)$ as $n\to\infty$.\\  
\textit{Case-3}: Let $l_p\neq 0$ and $l_N\neq 0$.
In this situation, we deduce from \eqref{eq3.13} and $(M)$ that $u_n^-\to 0$ in $W^{1,p}_V(\mathbb{R}^N)$ and $W^{1,N}_V(\mathbb{R}^N)$ as $n\to\infty$. It follows from Lemma \ref{lem2.1} and Lemma \ref{lem2.2} that $u_n^-\to 0$ in $L^\tau(\mathbb{R}^N)$ as $n\to\infty$. Hence, we have $u^-_{\lambda_1,\lambda_2}=0$ a.e. in $\mathbb{R}^N$ and $u_{\lambda_1,\lambda_2}=u^+_{\lambda_1,\lambda_2}\geq 0$ in $\mathbb{R}^N$. Consequently, we can assume that $u_n=u^+_n\geq 0$ in $\mathbb{R}^N$, since $u_n^-\to 0$ in $L^\tau(\mathbb{R}^N)$ as $n\to\infty$.\\
The above three discussions conclude that the sequence $\{u_n\}_n$ and $u_{\lambda_1,\lambda_2}$ are nonnegative. This completes the proof.
\end{proof}
\begin{proof}[\textbf{Proof of Theorem \ref{thm1.1}}.]
Let us fix $\lambda_2>0$ and $\lambda_1\in (0,\widehat{\lambda}_1)$, with $\widehat{\lambda}_1=\text{min}\{\lambda_1^\ast,\lambda_1^0\}$, where $\lambda_1^\ast$ as in Lemma \ref{lem3.3}, while we set $\lambda_1^0$ as follows:
\begin{equation}\label{eq3.14}
 \lambda_1^0=\frac{aq\big(\text{min}\{1,V_0\}\big)^{k+1}(\mu-N(k+1))}{(\mu-q)N(k+1)\|h\|_{\eta}} \Bigg[\bigg(1-\frac{\beta}{N}\bigg)\frac{\alpha_N}{\alpha_0}\Bigg]^{\frac{N(k+1)-q}{N^\prime}} >0. 
\end{equation}
  By Lemma \ref{lem3.4}, there exists a sequence $\{u_n\}_n$ of nonnegative functions in $\overline{B}_\rho$ such that \eqref{eq3.7} holds. Consequently, from remark \ref{remark2.4}, Corollary \ref{cor2.6} and Lemma \ref{lem2.8}, we have
\begin{equation}\label{eq3.15}
    \begin{cases}
     u_n\rightharpoonup u_{\lambda_1,\lambda_2}~\text{in}~W^{1,p}_V(\mathbb{R}^N)~\text{and}~W^{1,N}_V(\mathbb{R}^N),\\
     u_n\to u_{\lambda_1,\lambda_2} ~\text{in}~L^\tau(\mathbb{R}^N),~\text{for any}~\tau\in[p,p^\ast)\cup[N,\infty),\\
     u_n\to u_{\lambda_1,\lambda_2} ~\text{in}~L^q(\mathbb{R}^N,|x|^{-\beta}),~\text{for any}~q\in [N,\infty),~\text{and}\\
      u_n\to u_{\lambda_1,\lambda_2} ~\text{a.e. in}~\mathbb{R}^N~\text{as}~n\to \infty.
    \end{cases}
\end{equation}
 Because of \eqref{eq3.15}, without loss of generality, up to a subsequence, we can assume that $\|u_n\|_{W^{1,p}_V}\to l_p$ and $\|u_n\|_{W^{1,N}_V}\to l_N$ as $n\to\infty$, where $l_p,l_N\geq 0$. Clearly, from \eqref{eq3.7}, we have $\langle{J^\prime_{\lambda_1,\lambda_2}(u_n),u_n-u_{\lambda_1,\lambda_2}}\rangle\to 0$ as $n\to\infty$. It follows that
\begin{equation}\label{eq3.16}
     \sum_{r\in\{p,N\}}\Big(M(l_r^r)\big\langle{u_n,u_n-u_{\lambda_1,\lambda_2}}\big\rangle_{r,V}\Big)- \lambda_1\int_{\mathbb{R}^N}h(x)u_n^{q-1}(u_n-u_{\lambda_1,\lambda_2})~\mathrm{d}x-\lambda_2\int_{\mathbb{R}^N}\cfrac{g(x,u_n)(u_n-u_{\lambda_1,\lambda_2})}{|x|^\beta}~\mathrm{d}x=o_n(1)
\end{equation}
 as $n\to\infty$. By Lemma \ref{lem3.4}, Lemma \ref{lem2.7} and $(G2)$, we obtain as $n\to\infty$
\begin{align*}0&>m_{\lambda_1,\lambda_2}= J_{\lambda_1,\lambda_2}(u_n)-\frac{1}{\mu}\langle{J^\prime_{\lambda_1,\lambda_2}(u_n),u_n}\rangle+o_n(1)\\
&\geq a\bigg(\frac{1}{p(k+1)}-\frac{1}{\mu}\bigg)\|u_n\|^{p(k+1)}_{W^{1,p}_V}+a\bigg(\frac{1}{N(k+1)}-\frac{1}{\mu}\bigg)\|u_n\|^{N(k+1)}_{W^{1,N}_V}-\lambda_1\bigg(\frac{1}{q}-\frac{1}{\mu}\bigg)\|h\|_\eta\|u_n\|^q_{W^{1,N}}+o_n(1)  \\
&\geq a\bigg(\frac{1}{N(k+1)}-\frac{1}{\mu}\bigg)\|u_n\|^{N(k+1)}_{W^{1,N}_V}-\lambda_1\bigg(\frac{1}{q}-\frac{1}{\mu}\bigg)\|h\|_\eta\|u_n\|^q_{W^{1,N}}+o_n(1). \end{align*}
This together with Lemma \ref{lem2.2}, we get as $n\to\infty$
$$ a\bigg(\frac{1}{N(k+1)}-\frac{1}{\mu}\bigg)\big(\text{min}\{1,V_0\}\big)^{k+1}\|u_n\|^{N(k+1)}_{W^{1,N}}-\lambda_1\bigg(\frac{1}{q}-\frac{1}{\mu}\bigg)\|h\|_\eta\|u_n\|^q_{W^{1,N}}+o_n(1) <0.$$
Consequently, for $\lambda_1\in (0,\widehat{\lambda}_1)$, with $\widehat{\lambda}_1=\text{min}\{\lambda_1^\ast,\lambda_1^0\}$, we have
\begin{equation}\label{eq3.17}
\limsup_{n\to\infty}\|u_n\|^{N^\prime}_{W^{1,N}}\leq\Bigg[\frac{\lambda_1(\mu-q)N(k+1)\|h\|_{\eta}}{aq\big(\text{min}\{1,V_0\}\big)^{k+1}(\mu-N(k+1))}\Bigg]^{\frac{N^\prime}{N(k+1)-q}}<\bigg(1-\frac{\beta}{N}\bigg)\frac{\alpha_N}{\alpha_0},  
\end{equation}
thanks to \eqref{eq3.14}. It follows from \eqref{eq3.17} that there exist $m_1>0$ and $n_0\in\mathbb{N}$ sufficiently large such that $\|u_n\|^{N^\prime}_{W^{1,N}}<m_1<\big(1-\frac{\beta}{N}\big)\frac{\alpha_N}{\alpha_0}$, for all $n\geq n_0$. Choose $r\geq N$, with $r^\prime=\frac{r}{r-1}>1$ satisfying $\frac{1}{r}+\frac{1}{r^\prime}=1$ . Let $r^\prime$ close to 1 and $\alpha>\alpha_0$ close to $\alpha_0$ such that we still have $r^\prime\alpha\|u_n\|^{N^\prime}_{W^{1,N}}<m_1<\big(1-\frac{\beta}{N}\big)\alpha_N$, for all $n\geq n_0$. Let us put
\begin{equation}\label{eq3.333}
 \widetilde{u}_n=\begin{cases}
        \frac{u_n}{\|u_n\|_{W^{1,N}}}~\text{if}~u_n\neq 0,\\
        0~~~~~~~~~~~~~\text{if}~u_n=0.
    \end{cases}   
\end{equation}
 Now, by $(G1)$, H\"older's inequality, Corollary \ref{cor2.9}, Lemma \ref{lem2.8} and \eqref{eq3.15}, we obtain
\begin{align*}\bigg|\int_{\mathbb{R}^N}\cfrac{g(x,u_n)(u_n-u_{\lambda_1,\lambda_2})}{|x|^\beta}~\mathrm{d}x\bigg|&\leq b_1\Bigg\| \frac{|u_n|^{N(k+1)-1}}{|x|^{\frac{\beta}{N^\prime}}}\Bigg\|_{N^\prime}\Bigg\| \frac{u_n-u_{\lambda_1,\lambda_2}}{|x|^{\frac{\beta}{N}}}\Bigg\|_{N}+b_2\Bigg\|\frac{\Phi\big(\alpha_0|u_n|^{N^\prime}\big)}{|x|^{\frac{\beta}{r^\prime}}}\Bigg\|_{r^\prime}\Bigg\| \frac{u_n-u_{\lambda_1,\lambda_2}}{|x|^{\frac{\beta}{r}}}\Bigg\|_{r} \\
&\leq b_1 \|u_n\|^{N(k+1)-1}_{L^{(N(k+1)-1)N^\prime}(\mathbb{R}^N,|x|^{-\beta})}\|u_n-u_{\lambda_1,\lambda_2}\|_{L^N(\mathbb{R}^N,|x|^{-\beta})}\\
&\qquad+b_2 \Bigg[\int_{\mathbb{R}^N}\cfrac{\Phi\big(r^\prime\alpha_0|u_n|^{N^\prime}\big)}{|x|^\beta}~\mathrm{d}x\Bigg]^{\frac{1}{r^\prime}}\|u_n-u_{\lambda_1,\lambda_2}\|_{L^r(\mathbb{R}^N,|x|^{-\beta})}\\
&\leq b_1\mathcal{S}^{-(N(k+1)-1)}_{(N(k+1)-1)N^\prime,\beta}\|u_n\|_{\mathbf{X}}^{N(k+1)-1} \|u_n-u_{\lambda_1,\lambda_2}\|_{L^N(\mathbb{R}^N,|x|^{-\beta})}\\
&\qquad+b_2\Bigg[\int_{\mathbb{R}^N}\cfrac{\Phi\big(r^\prime\alpha_0\|u_n\|_{{W^{1,N}}}^{N^\prime}|\widetilde{u}_n|^{N^\prime}\big)}{|x|^\beta}~\mathrm{d}x\Bigg]^{\frac{1}{r^\prime}}\|u_n-u_{\lambda_1,\lambda_2}\|_{L^r(\mathbb{R}^N,|x|^{-\beta})}\\
&\leq C_2\bigg(\|u_n-u_{\lambda_1,\lambda_2}\|_{L^N(\mathbb{R}^N,|x|^{-\beta})}+\|u_n-u_{\lambda_1,\lambda_2}\|_{L^r(\mathbb{R}^N,|x|^{-\beta})}\bigg) \to 0~\text{as}~n\to\infty,\end{align*}
where $$C_2=b_1\mathcal{S}^{-(N(k+1)-1)}_{(N(k+1)-1)N^\prime,\beta}\sup_{n\in\mathbb{N}}\|u_n\|_{\mathbf{X}}^{N(k+1)-1}+b_2\Bigg[\sup_{n\geq n_0}\int_{\mathbb{R}^N}\cfrac{\Phi\big(r^\prime\alpha_0\|u_n\|_{{W^{1,N}}}^{N^\prime}|\widetilde{u}_n|^{N^\prime}\big)}{|x|^\beta}~\mathrm{d}x\Bigg]^{\frac{1}{r^\prime}},$$
which is finite, thanks to Lemma \ref{lem2.11} and remark \ref{rem2.10}.
It follows that 
\begin{equation}\label{eq3.18}
  \lim_{n\to\infty}  \int_{\mathbb{R}^N}\cfrac{g(x,u_n)(u_n-u_{\lambda_1,\lambda_2})}{|x|^\beta}~\mathrm{d}x=0.
\end{equation}
 As $\frac{1}{\eta}+\frac{q-1}{N}+\frac{1}{N}=1$, therefore by  H\"older's inequality, Corollary \ref{cor2.6} and \eqref{eq3.15}, we have
 \begin{align*}\bigg|\int_{\mathbb{R}^N}h(x)u_n^{q-1}(u_n-u_{\lambda_1,\lambda_2})~\mathrm{d}x\bigg|&\leq \|h\|_\eta\|u_n\|^{q-1}_N\|u_n-u_{\lambda_1,\lambda_2}\|_N\\
 &\leq\mathcal{S}^{-(q-1)}_N \|h\|_\eta\|u_n\|^{q-1}_{\mathbf{X}}\|u_n-u_{\lambda_1,\lambda_2}\|_N\to 0~\text{as}~n\to\infty.\end{align*}
 Hence, we obtain
 \begin{equation}\label{eq3.19}
   \lim_{n\to\infty} \int_{\mathbb{R}^N}h(x)u_n^{q-1}(u_n-u_{\lambda_1,\lambda_2})~\mathrm{d}x=0. 
 \end{equation}
 Passing $n\to\infty$ in \eqref{eq3.16} and using \eqref{eq3.18} and \eqref{eq3.19}, we get
 \begin{equation}\label{eq3.20}
     M(l_p^p)\lim_{n\to\infty}\big\langle{u_n,u_n-u_{\lambda_1,\lambda_2}}\big\rangle_{p,V}+M( l_N^N)\lim_{n\to\infty}\big\langle{u_n,u_n-u_{\lambda_1,\lambda_2}}\big\rangle_{N,V}=0. 
 \end{equation}
 Due to the degenerate nature of the Kirchhoff function $M$, we have four cases, but we shall show that the first three cases cannot occur.\\
\textit{Case-1}: Let $l_p=0$ and $l_N=0$.
This situation cannot occur. Indeed, if  $l_p=0$ and $l_N=0$, then $u_n\to 0$ in $\mathbf{X}$ as $n\to\infty$. It follows from \eqref{eq3.12} that, $0=J_{\lambda_1,\lambda_2}(0) =m_{\lambda_1,\lambda_2}<0$, which is absurd situation.\\ 
\textit{Case-2}: Let $l_p\neq 0$ and $l_N=0$.
In this case, we obtain from \eqref{eq3.20} and $(M)$ that
\begin{equation}\label{eq3.21}
 \lim_{n\to\infty}\big\langle{u_n,u_n-u_{\lambda_1,\lambda_2}}\big\rangle_{p,V}=0.   
\end{equation}
Since $u_n\rightharpoonup u_{\lambda_1,\lambda_2}~\text{in}~W^{1,p}_V(\mathbb{R}^N)$ as $n\to\infty$, therefore
\begin{equation}\label{eq3.22}
 \lim_{n\to\infty}\big\langle{u_{\lambda_1,\lambda_2},u_n-u_{\lambda_1,\lambda_2}}\big\rangle_{p,V}=0.   
\end{equation}
Combining \eqref{eq3.21} and \eqref{eq3.22} together, we get
\begin{equation}\label{eq3.23}
    \begin{split}
     \lim_{n\to\infty}\int_{\mathbb{R}^N}\big(|\nabla u_n|^{p-2}\nabla u_n-|\nabla u_{\lambda_1,\lambda_2}|^{p-2}\nabla u_{\lambda_1,\lambda_2}\big).\big(\nabla u_n-\nabla u_{\lambda_1,\lambda_2}\big)~\mathrm{d}x\\
    +\lim_{n\to\infty}\int_{\mathbb{R}^N}V(x)\big(|u_n|^{p-2} u_n-| u_{\lambda_1,\lambda_2}|^{p-2} u_{\lambda_1,\lambda_2}\big)\big( u_n- u_{\lambda_1,\lambda_2}\big)~\mathrm{d}x=0.
    \end{split}
\end{equation}
By convexity and $(V1)$, we have
\begin{equation}\label{eq3.24}
    \begin{split}
     \big(|\nabla u_n|^{p-2}\nabla u_n-|\nabla u_{\lambda_1,\lambda_2}|^{p-2}\nabla u_{\lambda_1,\lambda_2}\big).\big(\nabla u_n-\nabla u_{\lambda_1,\lambda_2}\big)\geq 0~\text{a.e. in}~\mathbb{R}^N\\~\text{and}~
    V(x)\big(|u_n|^{p-2} u_n-| u_{\lambda_1,\lambda_2}|^{p-2} u_{\lambda_1,\lambda_2}\big)\big( u_n- u_{\lambda_1,\lambda_2}\big)\geq 0~\text{a.e. in}~\mathbb{R}^N
    \end{split}
\end{equation}
for any $n\in\mathbb{N}$. Now, from \eqref{eq3.23} and \eqref{eq3.24}, we obtain
\begin{equation}\label{eq3.25}
  \lim_{n\to\infty}\int_{\mathbb{R}^N}\big(|\nabla u_n|^{p-2}\nabla u_n-|\nabla u_{\lambda_1,\lambda_2}|^{p-2}\nabla u_{\lambda_1,\lambda_2}\big).\big(\nabla u_n-\nabla u_{\lambda_1,\lambda_2}\big)~\mathrm{d}x=0
\end{equation}
and
\begin{equation}\label{eq3.26}
   \lim_{n\to\infty}\int_{\mathbb{R}^N}V(x)\big(|u_n|^{p-2} u_n-| u_{\lambda_1,\lambda_2}|^{p-2} u_{\lambda_1,\lambda_2}\big)\big( u_n- u_{\lambda_1,\lambda_2}\big)~\mathrm{d}x=0.
\end{equation}
Using Simon's inequality (see Lemma 4.2 of \cite{MR3732173}), for $p\geq 2$, there exists $c_p>0$ such that
\begin{align*} \int_{\mathbb{R}^N}|\nabla u_n-\nabla u_{\lambda_1,\lambda_2}|^p~\mathrm{d}x &\leq c_p\int_{\mathbb{R}^N}\big(|\nabla u_n|^{p-2}\nabla u_n-|\nabla u_{\lambda_1,\lambda_2}|^{p-2}\nabla u_{\lambda_1,\lambda_2}\big).\big(\nabla u_n-\nabla u_{\lambda_1,\lambda_2}\big)~\mathrm{d}x\\
&\quad\to 0~\text{as}~n\to\infty,~(\text{thanks to}~\eqref{eq3.25}).\end{align*}
Finally, assume $1<p< 2$, then by Simon's inequality \& H\"older's inequality, there exists $C_p>0$ such that
\begin{align*} \int_{\mathbb{R}^N}|\nabla u_n-\nabla u_{\lambda_1,\lambda_2}|^p~\mathrm{d}x &\leq C_p\int_{\mathbb{R}^N}\Big[\big(|\nabla u_n|^{p-2}\nabla u_n-|\nabla u_{\lambda_1,\lambda_2}|^{p-2}\nabla u_{\lambda_1,\lambda_2}\big).\big(\nabla u_n-\nabla u_{\lambda_1,\lambda_2}\big)\Big]^{\frac{p}{2}}\\
&\qquad\times\Big[|\nabla u_n|^{p}+|\nabla u_{\lambda_1,\lambda_2}|^{p}\Big]^{\frac{2-p}{2}}~\mathrm{d}x\\
&\leq C_p\int_{\mathbb{R}^N}\Big[\big(|\nabla u_n|^{p-2}\nabla u_n-|\nabla u_{\lambda_1,\lambda_2}|^{p-2}\nabla u_{\lambda_1,\lambda_2}\big).\big(\nabla u_n-\nabla u_{\lambda_1,\lambda_2}\big)\Big]^{\frac{p}{2}} \\
&\qquad\times\Big[\big(|\nabla u_n|^{p}\big)^{\frac{2-p}{2}}+\big(|\nabla u_{\lambda_1,\lambda_2}|^{p}\big)^{\frac{2-p}{2}}\Big]~\mathrm{d}x\\
&\leq C_p\Bigg[\int_{\mathbb{R}^N}\big(|\nabla u_n|^{p-2}\nabla u_n-|\nabla u_{\lambda_1,\lambda_2}|^{p-2}\nabla u_{\lambda_1,\lambda_2}\big).\big(\nabla u_n-\nabla u_{\lambda_1,\lambda_2}\big)~\mathrm{d}x\Bigg]^{\frac{p}{2}} \\
&\qquad\times\Big[\|\nabla u_n\|_p^{\frac{(2-p)p}{2}}+\|\nabla u_{\lambda_1,\lambda_2}\|_p^{\frac{(2-p)p}{2}}\Big]\to 0~\text{as}~n\to\infty, \end{align*}
thanks to \eqref{eq3.25} and the fact that $\big\{\|\nabla u_n\|_p^{\frac{(2-p)p}{2}}+\|\nabla u_{\lambda_1,\lambda_2}\|_p^{\frac{(2-p)p}{2}}\big\}_n$ is bounded in $\mathbb{R}$. Hence, we conclude that  $\nabla u_n\to\nabla u_{\lambda_1,\lambda_2}$ in $L^p(\mathbb{R}^N)$ as $n\to\infty$, for any $p> 1$. Arguing similarly and using \eqref{eq3.26}, we can easily deduce that $u_n\to u_{\lambda_1,\lambda_2}$ in $L^p_V(\mathbb{R}^N)$ as $n\to\infty$, for any $p> 1$. Consequently, $\nabla u_n\to\nabla u_{\lambda_1,\lambda_2}$ and $u_n\to u_{\lambda_1,\lambda_2}$ a.e. in $\mathbb{R}^N$ as $n\to\infty$. Notice that $u_n\to u_{\lambda_1,\lambda_2}$ in $W^{1,p}_V(\mathbb{R}^N)$ as $n\to\infty$ and hence we have $\|u_n\|_{W^{1,p}_V}\to \|u_{\lambda_1,\lambda_2}\|_{W^{1,p}_V} $ in $\mathbb{R}$ as $n\to\infty$. Therefore, we obtain
\begin{equation}\label{eq3.27}
 \|u_{\lambda_1,\lambda_2}\|_{W^{1,p}_V}=l_p>0.   
\end{equation}
Since $l_N=0$, therefore $\|u_n\|_{W^{1,N}_V}\to 0 $ in $\mathbb{R}$ as $n\to\infty$. It follows that $\nabla u_n\to 0$ in $L^N(\mathbb{R}^N)$ and $ u_n\to 0$ in $L^N_V(\mathbb{R}^N)$ as $n\to\infty$. Hence, we obtain $u_{\lambda_1,\lambda_2}=0$ a.e. in $\mathbb{R}^N$ and $\nabla u_{\lambda_1,\lambda_2}=0$ a.e. in $\mathbb{R}^N$, which contradicts \eqref{eq3.27}. So, this situation cannot occur.\\
\textit{Case-3}: Let $l_p= 0$ and $l_N\neq 0$.
The proof of this case is similar to \textit{Case-2}. Moreover, this situation is also impossible.\\ 
\textit{Case-4}: Let $l_p\neq 0$ and $l_N\neq 0$.
Since $u_n\rightharpoonup u_{\lambda_1,\lambda_2}~\text{in}~W^{1,r}_V(\mathbb{R}^N)$ as $n\to\infty$, for $r\in\{p,N\}$, therefore from \eqref{eq3.20}, we have
\begin{equation}\label{eq3.28}
    \begin{split}
     \sum_{r\in\{p,N\}}\Bigg(M(l_r^r)\lim_{n\to\infty}\Bigg[\int_{\mathbb{R}^N}\big(|\nabla u_n|^{r-2}\nabla u_n-|\nabla u_{\lambda_1,\lambda_2}|^{r-2}\nabla u_{\lambda_1,\lambda_2}\big).\big(\nabla u_n-\nabla u_{\lambda_1,\lambda_2}\big)~\mathrm{d}x\\
    +\int_{\mathbb{R}^N}V(x)\big(|u_n|^{r-2} u_n-| u_{\lambda_1,\lambda_2}|^{r-2} u_{\lambda_1,\lambda_2}\big)\big( u_n- u_{\lambda_1,\lambda_2}\big)~\mathrm{d}x\Bigg]\Bigg)=0.
    \end{split}
\end{equation}
Due to convexity of the map $t\mapsto t^r$ for $r\in\{p,N\}$, $ (V1)$ and the fact that $M(l_p^p),M(l_N^N)>0$, we obtain from \eqref{eq3.28} that
$$\text{min} \big\{M(l_p^p),M(l_N^N)\big\}\limsup_{n\to\infty}\langle{\mathscr{L}^{\prime}(u_n)-\mathscr{L}^{\prime}(u_{\lambda_1,\lambda_2}),u_n-u_{\lambda_1,\lambda_2}}\rangle\leq 0.$$
It follows that $$\limsup_{n\to\infty}\langle{\mathscr{L}^{\prime}(u_n)-\mathscr{L}^{\prime}(u_{\lambda_1,\lambda_2}),u_n-u_{\lambda_1,\lambda_2}}\rangle\leq 0. $$
This together with Lemma \ref{lem2.12}, we deduce that $u_n\to u_{\lambda_1,\lambda_2}$ in $\mathbf{X}$ as $n\to\infty$. Now using the fact that $J_{\lambda_1,\lambda_2}\in\mathcal{C}^1(\mathbf{X},\mathbb{R})$, we get $J_{\lambda_1,\lambda_2}(u_{\lambda_1,\lambda_2})=m_{\lambda_1,\lambda_2}<0$ and $J^\prime_{\lambda_1,\lambda_2}(u_{\lambda_1,\lambda_2})=0$. This implies at once $u_{\lambda_1,\lambda_2}\neq 0$ and $u_{\lambda_1,\lambda_2}$ is a nontrivial nonnegative solution of \eqref{main problem}.

Now, since the nonnegative solution $u_{\lambda_1,\lambda_2}\in \overline{B}_\rho $, with $\rho$ independent of $\lambda_1$ as stated in Lemma \ref{lem3.4}, therefore $\{u_{\lambda_1,\lambda_2} \}_{\lambda_1\in (0,\widehat{\lambda}_1)}$ is uniformly bounded in $\mathbf{X}$. Moreover, by $(G2)$, Lemma \ref{lem2.7} and the fact that $u_n\to u_{\lambda_1,\lambda_2}$ in $\mathbf{X}$ as $n\to\infty$ , it is easy to see that 
\begin{align*}0&>m_{\lambda_1,\lambda_2}=\lim_{n\to\infty}\Big[ J_{\lambda_1,\lambda_2}(u_n)-\frac{1}{\mu}\langle{J^\prime_{\lambda_1,\lambda_2}(u_n),u_n}\rangle\Big]\\
&\geq a\bigg(\frac{1}{p(k+1)}-\frac{1}{\mu}\bigg)\|u_{\lambda_1,\lambda_2}\|^{p(k+1)}_{W^{1,p}_V}+a\bigg(\frac{1}{N(k+1)}-\frac{1}{\mu}\bigg)\|u_{\lambda_1,\lambda_2}\|^{N(k+1)}_{W^{1,N}_V}-\lambda_1\bigg(\frac{1}{q}-\frac{1}{\mu}\bigg)\|u_{\lambda_1,\lambda_2}\|^q_{q,h} \\
&\geq a\bigg(\frac{1}{p(k+1)}-\frac{1}{\mu}\bigg)\|u_{\lambda_1,\lambda_2}\|^{p(k+1)}_{W^{1,p}_V}+a\bigg(\frac{1}{N(k+1)}-\frac{1}{\mu}\bigg)\|u_{\lambda_1,\lambda_2}\|^{N(k+1)}_{W^{1,N}_V}-\lambda_1 D_{q,h} ,\end{align*}
where $$D_{q,h}=\bigg(\frac{1}{q}-\frac{1}{\mu}\bigg)\mathcal{S}^{-q}_{q,h}\sup_{\lambda_1\in (0,\widehat{\lambda}_1)}\|u_{\lambda_1,\lambda_2}\|^q_{\mathbf{X}}<\infty.$$
Therefore, we have
$$0\geq\limsup_{\lambda_1\to 0^+} m_{\lambda_1,\lambda_2}\geq \limsup_{\lambda_1\to 0^+}\Bigg[a\bigg(\frac{1}{p(k+1)}-\frac{1}{\mu}\bigg)\|u_{\lambda_1,\lambda_2}\|^{p(k+1)}_{W^{1,p}_V} +a\bigg(\frac{1}{N(k+1)}-\frac{1}{\mu}\bigg)\|u_{\lambda_1,\lambda_2}\|^{N(k+1)}_{W^{1,N}_V}\Bigg]\geq 0.$$
This shows that $$\lim_{\lambda_1\to 0^+}\|u_{\lambda_1,\lambda_2}\|_{W^{1,p}_V}=\lim_{\lambda_1\to 0^+}\|u_{\lambda_1,\lambda_2}\|_{W^{1,N}_V}=0,~\text{i.e.,}~\lim_{\lambda_1\to 0^+}\|u_{\lambda_1,\lambda_2}\|_{\mathbf{X}}=0.$$
Hence the proof of Theorem \ref{thm1.1} is completed.
\end{proof}
\section{Proof of Theorem \ref{thm1.2}}\label{sec4}
In this section, for the sake of simplicity, we assume that the structural assumptions needed by Theorem \ref{thm1.2} hold. By Lemma \ref{lem3.2} \& Lemma \ref{lem3.3}, we see that the energy functional $J_{\lambda_1,\lambda_2}$ satisfies the mountain pass geometrical structures. To apply the mountain pass theorem, we must check the validity of the Palais-Smale compactness condition at a suitable level $c$. We say $\{u_n\}_n\subset\mathbf{X}$ is a $(PS)_c$ sequence for $J_{\lambda_1,\lambda_2}$ at any suitable level $c$ if 
\begin{equation}\label{eq4.1}
  J_{\lambda_1,\lambda_2}(u_n)\to c~\text{and}~\sup_{\|\phi\|_{\mathbf{X}}=1}|\langle{J^\prime_{\lambda_1,\lambda_2}(u_n),\phi}\rangle|\to 0~\text{as}~n\to\infty.  
\end{equation}
Note that if this sequence has a convergent subsequence in $\mathbf{X}$, then we say $J_{\lambda_1,\lambda_2}$ satisfies $(PS)_c$ condition at any suitable level $c$. 
\begin{lemma}\label{lem4.1}
 Let $\lambda_1>0$ and $\lambda_2>0$ be fixed. Then, the $(PS)_c$ sequence $\{u_n\}_n\subset\mathbf{X}$ for $J_{\lambda_1,\lambda_2}$ at any level $c>0$ is bounded in $\mathbf{X}$ and satisfying
 $$ c+\Gamma\lambda_1^{\sigma}+o_n(1)\geq \Upsilon\Big(\|u_n\|^{p(k+1)}_{W^{1,p}_V}+\|u_n\|^{N(k+1)}_{W^{1,N}_V}\Big)~\text{as}~n\to\infty, $$
 where  $\sigma$, $\Gamma$ and $\Upsilon$ are defined by
 $$\sigma=\frac{N(k+1)}{N(k+1)-q},~\Gamma=\frac{(\mu-q)S^{-q}_N \|h\|^\sigma_\eta}{q\mu\sigma}\bigg(\frac{2(\mu-q)S^{-q}_N}{a(\mu-N(k+1))}\bigg)^\frac{q\sigma}{N(k+1)},~\text{and}~\Upsilon=\frac{a(\mu-N(k+1))}{2\mu N(k+1)}, $$
 where $S_N$ is the best constant in the embedding $W^{1,N}_V(\mathbb{R}^N)\hookrightarrow L^{N}(\mathbb{R}^N)$.
\end{lemma}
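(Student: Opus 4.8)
The plan is to test the Palais--Smale condition against the standard combination $J_{\lambda_1,\lambda_2}(u_n)-\frac{1}{\mu}\langle J'_{\lambda_1,\lambda_2}(u_n),u_n\rangle$, where $\mu>N(k+1)$ is the constant from $(G2)$. Since $M(s)=as^k$ gives $\widehat{M}(t)=\frac{a}{k+1}t^{k+1}$, the Kirchhoff parts of $J_{\lambda_1,\lambda_2}$ are positively homogeneous of degrees $p(k+1)$ and $N(k+1)$, so the combination produces the coefficients $a\big(\frac{1}{p(k+1)}-\frac{1}{\mu}\big)$ and $a\big(\frac{1}{N(k+1)}-\frac{1}{\mu}\big)$ in front of $\|u_n\|_{W^{1,p}_V}^{p(k+1)}$ and $\|u_n\|_{W^{1,N}_V}^{N(k+1)}$ respectively; both are strictly positive because $\mu>N(k+1)>p(k+1)$. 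The crucial gain is that the singular exponential term contributes $\frac{\lambda_2}{\mu}\int_{\mathbb{R}^N}|x|^{-\beta}\big(g(x,u_n)u_n-\mu G(x,u_n)\big)\,\mathrm{d}x$, which is nonnegative by $(G2)$ and may simply be discarded, so no compactness of the exponential nonlinearity is needed at this stage.

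Next I would control the remaining subcritical term $\lambda_1\big(\frac{1}{q}-\frac{1}{\mu}\big)\int_{\mathbb{R}^N}h(x)(u_n^+)^q\,\mathrm{d}x$. Using $u_n^+\le|u_n|$ together with Lemma \ref{lem2.7} and the embedding constant $S_N$, so that $\int_{\mathbb{R}^N}h(x)(u_n^+)^q\,\mathrm{d}x\le\|h\|_\eta S_N^{-q}\|u_n\|_{W^{1,N}_V}^q$, this term is bounded by a constant multiple of $\lambda_1\|u_n\|_{W^{1,N}_V}^q$. Since $q<N<N(k+1)$, I would then absorb it into half of the $N$-term via Young's inequality with the conjugate exponents $\frac{N(k+1)}{q}$ and $\sigma=\frac{N(k+1)}{N(k+1)-q}$. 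Tuning the Young parameter so that exactly $\frac12 a\big(\frac{1}{N(k+1)}-\frac{1}{\mu}\big)\|u_n\|_{W^{1,N}_V}^{N(k+1)}$ is split off leaves the residual coefficient $\Upsilon=\frac{a(\mu-N(k+1))}{2\mu N(k+1)}$ on the $N$-term and generates precisely the additive constant $\Gamma\lambda_1^\sigma$; the coefficient $a\big(\frac{1}{p(k+1)}-\frac{1}{\mu}\big)$ of the $p$-term already exceeds $\Upsilon$, so it may likewise be lowered to $\Upsilon$. Equivalently, one may minimise the scalar map $X\mapsto \Upsilon X^{N(k+1)}-\lambda_1\|h\|_\eta S_N^{-q}(\tfrac1q-\tfrac1\mu)X^{q}$ directly and read off the optimal value as $-\Gamma\lambda_1^\sigma$.

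For boundedness I would write $|\langle J'_{\lambda_1,\lambda_2}(u_n),u_n\rangle|\le\|J'_{\lambda_1,\lambda_2}(u_n)\|_{\mathbf{X}^\ast}\|u_n\|_{\mathbf{X}}=\eps_n\|u_n\|_{\mathbf{X}}$ with $\eps_n\to0$, so the chain above reads $c+\Gamma\lambda_1^\sigma+o_n(1)+\frac{\eps_n}{\mu}\|u_n\|_{\mathbf{X}}\ge\Upsilon\big(\|u_n\|_{W^{1,p}_V}^{p(k+1)}+\|u_n\|_{W^{1,N}_V}^{N(k+1)}\big)$. If $\|u_n\|_{\mathbf{X}}=\|u_n\|_{W^{1,p}_V}+\|u_n\|_{W^{1,N}_V}$ were unbounded, then one of the two weighted norms would diverge, and since $p(k+1),N(k+1)>1$ the right-hand side would grow superlinearly in $\|u_n\|_{\mathbf{X}}$, contradicting the merely linear growth $\frac{\eps_n}{\mu}\|u_n\|_{\mathbf{X}}$ of the left-hand side; hence $\{u_n\}_n$ is bounded. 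Once boundedness holds, $\frac{\eps_n}{\mu}\|u_n\|_{\mathbf{X}}=o_n(1)$ and the clean inequality of the statement follows. I expect the only real obstacle to be bookkeeping rather than analysis: one must check that the optimal Young constant reassembles into the stated closed form for $\Gamma$, which relies on the identity $1+\frac{q\sigma}{N(k+1)}=\sigma$ to recombine the powers of $\|h\|_\eta$, $S_N^{-q}$ and $(\mu-q)$; the degenerate Kirchhoff homogeneity and $(G2)$ otherwise make all sign management automatic.
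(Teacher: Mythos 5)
Your proposal is correct and follows essentially the same route as the paper: the combination $J_{\lambda_1,\lambda_2}(u_n)-\frac{1}{\mu}\langle J'_{\lambda_1,\lambda_2}(u_n),u_n\rangle$, discarding the singular exponential term via $(G2)$, bounding the $h$-term through Lemma \ref{lem2.7} and the embedding constant $S_N$, and absorbing it by Young's inequality with exponents $\frac{N(k+1)}{q}$ and $\sigma$ tuned to leave exactly $\Upsilon$ on the $N$-term and produce $\Gamma\lambda_1^\sigma$. Your boundedness step (superlinear growth of the right-hand side versus linear growth of $o_n(1)\|u_n\|_{\mathbf{X}}$, valid since $p(k+1)>1$) is just a unified phrasing of the paper's three-case contradiction argument, so the two proofs coincide in substance.
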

\begin{proof}
Let $\{u_n\}_n\subset\mathbf{X}$ be a $(PS)_c$ sequence for $J_{\lambda_1,\lambda_2}$ at level $c>0$. Thus, \eqref{eq4.1} is satisfied.
Hence, we obtain  
$$ \bigg\langle{J^\prime_{\lambda_1,\lambda_2}(u_n),\frac{u_n}{\|u_n\|_{\mathbf{X}}}}\bigg\rangle=o_n(1)~\text{and}~J_{\lambda_1,\lambda_2}(u_n)=c+o_n(1)~\text{as}~n\to\infty.$$
This together with $(G2)$, Lemma \ref{lem2.2} and Young's inequality with $\epsilon$, we have as $n\to\infty$
\begin{align*}c+o_n(1)+o_n(1)\|u_n\|_{\mathbf{X}}&=J_{\lambda_1,\lambda_2}(u_n)-\frac{1}{\mu}\langle{J^\prime_{\lambda_1,\lambda_2}(u_n),u_n}\rangle \\
&\geq a\bigg(\frac{1}{p(k+1)}-\frac{1}{\mu}\bigg)\|u_n\|^{p(k+1)}_{W^{1,p}_V}+a\bigg(\frac{1}{N(k+1)}-\frac{1}{\mu}\bigg)\|u_n\|^{N(k+1)}_{W^{1,N}_V}\\ &\qquad-\lambda_1 S^{-q}_N\bigg(\frac{1}{q}-\frac{1}{\mu}\bigg)\|h\|_\eta\|u_n\|^q_{W^{1,N}_V} \\
&\geq a\bigg(\frac{1}{p(k+1)}-\frac{1}{\mu}\bigg)\|u_n\|^{p(k+1)}_{W^{1,p}_V}+\bigg[a\bigg(\frac{1}{N(k+1)}-\frac{1}{\mu}\bigg)-\epsilon S^{-q}_N\bigg(\frac{1}{q}-\frac{1}{\mu}\bigg)\bigg]\|u_n\|^{N(k+1)}_{W^{1,N}_V}\\
&\qquad-C_\epsilon S^{-q}_N\bigg(\frac{1}{q}-\frac{1}{\mu}\bigg)\lambda_1^{\sigma}\|h\|_\eta^{\sigma}.\end{align*}
Set $\epsilon=a\big(\frac{1}{N(k+1)}-\frac{1}{\mu}\big)\big/2S^{-q}_N\big(\frac{1}{q}-\frac{1}{\mu}\big)$, $\Gamma=C_\epsilon S^{-q}_N\big(\frac{1}{q}-\frac{1}{\mu}\big)\|h\|_\eta^{\sigma}$ and $\Upsilon=\frac{a}{2}\big(\frac{1}{N(k+1)}-\frac{1}{\mu}\big)$, then we get from the above inequality that
\begin{equation}\label{eq4.2}
 c+\Gamma\lambda_1^{\sigma}+o_n(1)+o_n(1)\|u_n\|_{\mathbf{X}}\geq \Upsilon\Big(\|u_n\|^{p(k+1)}_{W^{1,p}_V}+\|u_n\|^{N(k+1)}_{W^{1,N}_V}\Big)~\text{as}~n\to\infty.
 \end{equation}
If possible, let $\{u_n\}_n$ be not bounded in $\mathbf{X}$, then we can divide into three cases to prove the conclusion of this Lemma.\\
\textit{Case-1:}
Let $\|u_n\|_{W^{1,p}_V}\to\infty$ and $\|u_n\|_{W^{1,N}_V}\to\infty$ as $n\to\infty$. Since $p(k+1)<N(k+1)$, therefore we have $\|u_n\|^{N(k+1)}_{W^{1,N}_V}\geq \|u_n\|^{p(k+1)}_{W^{1,N}_V}>1$ for $n$ large enough. It follows from \eqref{eq4.2} that
$$ c+\Gamma\lambda_1^{\sigma}+o_n(1)+o_n(1)\|u_n\|_{\mathbf{X}}\geq \Upsilon\Big(\|u_n\|^{p(k+1)}_{W^{1,p}_V}+\|u_n\|^{p(k+1)}_{W^{1,N}_V}\Big)\geq  2^{1-p(k+1)}\Upsilon\|u_n\|^{p(k+1)}_{\mathbf{X}}~\text{as}~n\to\infty.$$
Dividing $\|u_n\|^{p(k+1)}_{\mathbf{X}}$ on both the sides and letting $n\to\infty$, we get $0\geq 2^{1-p(k+1)}\Upsilon>0 $, which is a contradiction.\\
\textit{Case-2:}
Let $\|u_n\|_{W^{1,p}_V}\to\infty$ as $n\to\infty$ and $\|u_n\|_{W^{1,N}_V}$ is bounded. From \eqref{eq4.2}, we get
$$ c+\Gamma\lambda_1^{\sigma}+o_n(1)+o_n(1)\|u_n\|_{\mathbf{X}}\geq \Upsilon\|u_n\|^{p(k+1)}_{W^{1,p}_V}~\text{as}~n\to\infty.$$
Dividing $\|u_n\|^{p(k+1)}_{W^{1,p}_V}$ on both the sides and letting $n\to\infty$, we get $0\geq \Upsilon>0 $, which is again a contradiction.\\
\textit{Case-3:}
Let $\|u_n\|_{W^{1,N}_V}\to\infty$ as $n\to\infty$ and $\|u_n\|_{W^{1,p}_V}$ is bounded. Similar to \textit{Case-2}, we get a contradiction.\\
Consequently, we infer from the above three discussions that $\{u_n\}_n\subset\mathbf{X}$ must be bounded. Therefore, based on \eqref{eq4.2}, the proof of the lemma is completed.
\end{proof}
\begin{lemma}\label{lem4.2}
Suppose $\lambda_1\in(0,\lambda_1^\ast]$, where $\lambda_1^\ast$ as in Lemma \ref{lem3.3} and $\lambda_2>0$ be fixed. Assume $c<c_0-\Gamma\lambda_1^{\sigma}$, where $c_0>0$ such that 
\begin{equation}\label{eq4.3}
  c_0\leq \Upsilon\big(\text{min}\{1,V_0\}\big)^{k+1}\Bigg[\bigg(1-\frac{\beta}{N}\bigg)\frac{\alpha_N}{\alpha_0}\Bigg]^{\frac{N(k+1)}{N^\prime}},   
\end{equation}
where $\sigma$, $\Gamma$ and $\Upsilon$ are as in Lemma \ref{lem4.1}. Then, for any $(PS)_c$ sequence $\{u_n\}_n\subset\mathbf{X}$ of $J_{\lambda_1,\lambda_2}$ at level $c>0$, the following holds
\begin{equation}\label{eq4.4}
 \limsup_{n\to\infty}\|u_n\|^{N^\prime}_{W^{1,N}}<\bigg(1-\frac{\beta}{N}\bigg)\frac{\alpha_N}{\alpha_0}.   
\end{equation}
Consequently, if $\alpha>\alpha_0$, where $\alpha_0$ as in $(G4)$, then there exists $n_0\in\mathbb{N}$ sufficiently large enough such that
\begin{align*}
 \sup_{n\geq n_0}\int_{\mathbb{R}^N}\cfrac{|g(x,u_n)u_n|}{|x|^\beta}~\mathrm{d}x<\infty.    
\end{align*}
\end{lemma}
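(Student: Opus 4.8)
The plan is to establish the two assertions in sequence: first the sub-threshold norm bound \eqref{eq4.4}, and then feed it into the singular Trudinger--Moser inequality to control the nonlinear integral. Since Lemma \ref{lem4.1} already guarantees that $\{u_n\}_n$ is bounded in $\mathbf{X}$, I would start from the estimate it provides, namely $c+\Gamma\lambda_1^{\sigma}+o_n(1)\geq \Upsilon\big(\|u_n\|^{p(k+1)}_{W^{1,p}_V}+\|u_n\|^{N(k+1)}_{W^{1,N}_V}\big)$. Discarding the nonnegative $W^{1,p}_V$-term and passing to the limit superior gives $\limsup_{n\to\infty}\|u_n\|^{N(k+1)}_{W^{1,N}_V}\leq (c+\Gamma\lambda_1^{\sigma})/\Upsilon$. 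The hypothesis $c<c_0-\Gamma\lambda_1^{\sigma}$ yields $c+\Gamma\lambda_1^{\sigma}<c_0$, so combining with the bound \eqref{eq4.3} on $c_0$ I obtain the strict inequality $\limsup_{n\to\infty}\|u_n\|^{N(k+1)}_{W^{1,N}_V}<(\min\{1,V_0\})^{k+1}\big[(1-\tfrac{\beta}{N})\tfrac{\alpha_N}{\alpha_0}\big]^{N(k+1)/N^\prime}$.

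Next I would pass from the weighted norm to the ordinary Sobolev norm. Raising the comparison $\min\{1,V_0\}\|u_n\|^{N}_{W^{1,N}}\leq \|u_n\|^{N}_{W^{1,N}_V}$ from Lemma \ref{lem2.2} to the power $k+1$ cancels the factor $(\min\{1,V_0\})^{k+1}$, and then taking the $N^\prime/(N(k+1))$-th power (a strictly increasing operation) of the resulting inequality produces exactly \eqref{eq4.4}. The essential point is that strictness is preserved throughout, so that $\limsup_{n\to\infty}\|u_n\|^{N^\prime}_{W^{1,N}}$ lies \emph{strictly} below the critical Trudinger--Moser threshold $(1-\tfrac{\beta}{N})\tfrac{\alpha_N}{\alpha_0}$.

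For the second assertion I would invoke the growth bound $(G1)$, which gives $|g(x,u_n)u_n|\leq b_1|u_n|^{N(k+1)}+b_2|u_n|\Phi(\alpha_0|u_n|^{N^\prime})$, and estimate the two integrals separately. The polynomial contribution is harmless: since $N(k+1)\geq N$, Lemma \ref{lem2.8} gives $\int_{\mathbb{R}^N}|u_n|^{N(k+1)}|x|^{-\beta}\,\mathrm{d}x\leq \mathcal{S}^{-N(k+1)}_{N(k+1),\beta}\|u_n\|^{N(k+1)}_{\mathbf{X}}$, which is uniformly bounded because $\{u_n\}_n$ is bounded in $\mathbf{X}$. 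For the exponential contribution I would apply H\"older's inequality with a conjugate pair $(r,r^\prime)$, $r\geq N$ and $r^\prime>1$ close to $1$, splitting the integrand into $|u_n||x|^{-\beta/r}$ and $\Phi(\alpha_0|u_n|^{N^\prime})|x|^{-\beta/r^\prime}$. The first factor is controlled by $\|u_n\|_{L^r(\mathbb{R}^N,|x|^{-\beta})}\leq \mathcal{S}^{-1}_{r,\beta}\|u_n\|_{\mathbf{X}}$ via Lemma \ref{lem2.8}, again uniformly bounded, while Corollary \ref{cor2.9} upgrades the second factor to $\Phi(r^\prime\alpha_0|u_n|^{N^\prime})$.

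The main obstacle---and the place where \eqref{eq4.4} is indispensable---is to show that $\sup_{n\geq n_0}\int_{\mathbb{R}^N}\Phi(r^\prime\alpha_0|u_n|^{N^\prime})|x|^{-\beta}\,\mathrm{d}x$ is finite. Setting $\widetilde{u}_n=u_n/\|u_n\|_{W^{1,N}}$ (and $\widetilde{u}_n=0$ if $u_n=0$) so that $\|\widetilde{u}_n\|_{W^{1,N}}=1$, the exponent becomes $r^\prime\alpha_0\|u_n\|^{N^\prime}_{W^{1,N}}|\widetilde{u}_n|^{N^\prime}$. By \eqref{eq4.4} there is $m_1$ with $\|u_n\|^{N^\prime}_{W^{1,N}}<m_1<(1-\tfrac{\beta}{N})\tfrac{\alpha_N}{\alpha_0}$ for all $n$ large, so choosing $r^\prime>1$ sufficiently close to $1$ (equivalently $\alpha>\alpha_0$ close to $\alpha_0$) I can force $r^\prime\alpha_0\|u_n\|^{N^\prime}_{W^{1,N}}\leq(1-\tfrac{\beta}{N})\alpha_N$ for all $n\geq n_0$. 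The singular Trudinger--Moser inequality in Lemma \ref{lem2.11}, applied with $\tau=1$ as permitted by Remark \ref{rem2.12}, then bounds the integral uniformly over the unit ball $\{\|\widetilde{u}_n\|_{W^{1,N}}\leq 1\}$, hence uniformly in $n\geq n_0$. Gathering the polynomial and exponential estimates yields the asserted finiteness of $\sup_{n\geq n_0}\int_{\mathbb{R}^N}|g(x,u_n)u_n|\,|x|^{-\beta}\,\mathrm{d}x$.
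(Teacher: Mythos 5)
Your proposal is correct and takes essentially the same approach as the paper: you derive \eqref{eq4.4} from the bound of Lemma \ref{lem4.1} combined with Lemma \ref{lem2.2} and \eqref{eq4.3}, and then control the singular integral through the same splitting, H\"older's inequality, Corollary \ref{cor2.9}, Lemma \ref{lem2.8}, and Lemma \ref{lem2.11} applied to the normalized functions $\widetilde{u}_n$. The only (immaterial) deviation is that you obtain the pointwise bound $|g(x,u)u|\leq b_1|u|^{N(k+1)}+b_2|u|\Phi(\alpha_0|u|^{N^\prime})$ directly from $(G1)$, whereas the paper rederives the same estimate from $(G4)$ by integrating $\partial_u g$.
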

\begin{proof}
    Due to Lemma \ref{lem3.2} and Lemma \ref{lem3.3}, we conclude that there exists a $(PS)_c$ sequence $\{u_n\}_n\subset\mathbf{X}$ for $J_{\lambda_1,\lambda_2}$ at level $c>0$ satisfying $c<c_0-\Gamma\lambda_1^{\sigma}$. Consequently, by Lemma \ref{lem4.1} and Lemma \ref{lem2.2}, we have
    \begin{equation}\label{eq4.6}
      \Upsilon\big(\text{min}\{1,V_0\}\big)^{k+1}\|u_n\|^{N(k+1)}_{W^{1,N}}\leq\Upsilon\|u_n\|^{N(k+1)}_{W^{1,N}_V}\leq  c+\Gamma\lambda_1^{\sigma}+o_n(1)~\text{as}~n\to\infty.
    \end{equation}
    In view of \eqref{eq4.3} and \eqref{eq4.6}, we deduce that \eqref{eq4.4} holds. Now by $(G4)$ and remark \ref{rem2.10}, we have for all $(x,u)\in \mathbb{R}^N\times \mathbb{R}$,
    \begin{align*}\big|g(x,u)\big|&= \bigg|\int_{0}^{1}\frac{\mathrm{d}}{\mathrm{d}s}g(x,s u)~\mathrm{d}s\bigg|= \bigg|\int_{0}^{1}\frac{1}{s}\partial_u g(x,s u)s u~\mathrm{d}s\bigg|\leq\int_{0}^{1}\frac{1}{s}|\partial_u g(x,s u)s u|~\mathrm{d}s \\
    &\leq b_1|u|^{N(k+1)-1}\int_{0}^{1}s^{N(k+1)-2}~\mathrm{d}s+b_2 \int_{0}^{1}\sum_{j=N-1}^{\infty}\frac{\alpha_0^j|u|^{jN^\prime}}{j!}s^{jN^\prime-1}~\mathrm{d}s\end{align*}\begin{align*}
    \qquad\leq& b_1|u|^{N(k+1)-1}+b_2 \sum_{j=N-1}^{\infty}\frac{\alpha_0^j|u|^{jN^\prime}}{j!}= b_1 |u|^{N(k+1)-1}+b_2 \Phi(\alpha_0 |u|^{N^\prime}). \end{align*}
    It follows that for all $(x,u)\in \mathbb{R}^N\times \mathbb{R}$, we have
    \begin{equation}\label{eq4.7}
      \big|g(x,u)u\big|\leq b_1 |u|^{N(k+1)}+b_2|u| \Phi(\alpha_0 |u|^{N^\prime}). 
    \end{equation}
Now from \eqref{eq4.4}, there exist $\delta>0$ and $n_0\in\mathbb{N}$ sufficiently large such that $\|u_n\|^{N^\prime}_{W^{1,N}}<\delta<\big(1-\frac{\beta}{N}\big)\frac{\alpha_N}{\alpha_0}$, for all $n\geq n_0$. Choose $r\geq N$, with $r^\prime=\frac{r}{r-1}>1$ satisfying $\frac{1}{r}+\frac{1}{r^\prime}=1$ . Let $r^\prime$ close to 1 and $\alpha>\alpha_0$ close to $\alpha_0$ such that we still have $r^\prime\alpha\|u_n\|^{N^\prime}_{W^{1,N}}<\delta<\big(1-\frac{\beta}{N}\big)\alpha_N$, for all $n\geq n_0$. Let us suppose \eqref{eq3.333} is satisfied. Hence, from \eqref{eq4.7}, H\"older's inequality, Lemma \ref{lem2.8}, Corollary \ref{cor2.9}, remark \ref{rem2.10} and  Lemma \ref{lem2.11}, we have for all $n\geq n_0$
\begin{align*}\int_{\mathbb{R}^N}\cfrac{|g(x,u_n)u_n|}{|x|^\beta}~\mathrm{d}x&\leq b_1\|u_n\|^{N(k+1)}_{L^{N(k+1)}(\mathbb{R}^N,|x|^{-\beta})}+b_2\Bigg\|\frac{\Phi\big(\alpha_0|u_n|^{N^\prime}\big)}{|x|^{\frac{\beta}{r^\prime}}}\Bigg\|_{r^\prime}\| {u_n}\|_{L^{r}(\mathbb{R}^N,|x|^{-\beta})} \\
&\leq b_1\mathcal{S}^{-(N(k+1))}_{N(k+1),\beta}\|u_n\|^{N(k+1)}_{\mathbf{X}}+b_2\mathcal{S}^{-1}_{r,\beta}\Bigg[\int_{\mathbb{R}^N}\cfrac{\Phi\big(r^\prime\alpha_0|u_n|^{N^\prime}\big)}{|x|^\beta}~\mathrm{d}x\Bigg]^{\frac{1}{r^\prime}}\|u_n\|_{\mathbf{X}} \\
&\leq b_1\mathcal{S}^{-(N(k+1))}_{N(k+1),\beta} \|u_n\|^{N(k+1)}_{\mathbf{X}}+b_2\mathcal{S}^{-1}_{r,\beta}\Bigg[\sup_{n\geq n_0}\int_{\mathbb{R}^N}\cfrac{\Phi\big(r^\prime\alpha_0\|u_n\|^{N^\prime}_{W^{1,N}}|\widetilde{u}_n|^{N^\prime}\big)}{|x|^\beta}~\mathrm{d}x\Bigg]^{\frac{1}{r^\prime}}\|u_n\|_{\mathbf{X}}<\infty,\end{align*} since $\{u_n\}_n$ is bounded in $\mathbf{X}$, thanks to Lemma \ref{lem4.1}. Hence, the proof is completed.
\end{proof}
The next lemma is devoted to singular exponential nonlinearity in the spirit of Brézis and Lieb. 
\begin{lemma}\label{lem4.3}
 Suppose $\{u_n\}_n$ be a sequence in $\mathbf{X}$ and $u$ be in $\mathbf{X}$ such that $u_n\rightharpoonup u$ in $\mathbf{X}$, $u_n\to u$ a.e. in $\mathbb{R}^N$ and $\nabla u_n\to \nabla u$ a.e. in $\mathbb{R}^N$ as $n\to\infty$. If
 \begin{equation}\label{eq4.8}
 \limsup_{n\to\infty}\|u_n\|^{N^\prime}_{W^{1,N}}<\bigg(1-\frac{\beta}{N}\bigg)\frac{\alpha_N}{2^{N^\prime}\alpha_0}   
\end{equation}
is satisfied, then we have
\begin{itemize}
    \item [(i)] $$\lim_{n\to\infty}\int_{\mathbb{R}^N}\frac{|g(x,u_n)u_n-g(x,u_n-u)(u_n-u)-g(x,u)u|}{|x|^\beta}~\mathrm{d}x=0.$$
    \item [(ii)] $$\lim_{n\to\infty}\int_{\mathbb{R}^N}\frac{|G(x,u_n)-G(x,u_n-u)-G(x,u)|}{|x|^\beta}~\mathrm{d}x=0.$$
\end{itemize}
\end{lemma}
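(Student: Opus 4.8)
The plan is to set $w_n=u_n-u$ and to prove the two $L^1(\mathbb{R}^N,|x|^{-\beta})$ limits through Vitali's convergence theorem, combining pointwise a.e.\ convergence of the integrands with a uniform equi-integrability and tightness estimate fuelled by the singular Trudinger--Moser inequality. Since $g$ is Carath\'eodory with $g(x,0)=0$ by $(G4)$ (hence $G(x,0)=0$), the hypotheses $u_n\to u$ and $w_n\to 0$ a.e.\ give immediately
\[
g(x,u_n)u_n-g(x,w_n)w_n-g(x,u)u\longrightarrow 0\quad\text{and}\quad G(x,u_n)-G(x,w_n)-G(x,u)\longrightarrow 0
\]
a.e.\ in $\mathbb{R}^N$; note that the a.e.\ convergence of the gradients is irrelevant, as $g$ and $G$ act on function values only. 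The whole difficulty is thus to upgrade these a.e.\ limits to limits in $L^1(\mathbb{R}^N,|x|^{-\beta})$, i.e.\ to produce the missing equi-integrability and tightness.

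The decisive preliminary step is a norm estimate for the difference $w_n$. Since $u_n\rightharpoonup u$ in $\mathbf{X}$, the continuous embeddings of remark \ref{remark2.4} give $u_n\rightharpoonup u$ in $W^{1,N}(\mathbb{R}^N)$, so weak lower semicontinuity yields $\|u\|_{W^{1,N}}\leq\liminf_n\|u_n\|_{W^{1,N}}$. By the triangle inequality,
\[
\limsup_{n\to\infty}\|w_n\|^{N^\prime}_{W^{1,N}}\leq\big(2\limsup_{n\to\infty}\|u_n\|_{W^{1,N}}\big)^{N^\prime}=2^{N^\prime}\limsup_{n\to\infty}\|u_n\|^{N^\prime}_{W^{1,N}}<\Big(1-\frac{\beta}{N}\Big)\frac{\alpha_N}{\alpha_0},
\]
where the last inequality is exactly \eqref{eq4.8}; this is precisely why the factor $2^{N^\prime}$ is built into the hypothesis. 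Consequently Lemma \ref{lem2.11} applies to $u_n$ \emph{and} to $w_n$ simultaneously: choosing $r\geq N$ with $r^\prime$ close to $1$ and $\alpha>\alpha_0$ close to $\alpha_0$, one still has $r^\prime\alpha_0\|u_n\|^{N^\prime}_{W^{1,N}},\ r^\prime\alpha_0\|w_n\|^{N^\prime}_{W^{1,N}}<(1-\frac{\beta}{N})\alpha_N$ for $n$ large, so that $\sup_n\int_{\mathbb{R}^N}\Phi(r^\prime\alpha_0|u_n|^{N^\prime})|x|^{-\beta}\,\mathrm{d}x$ and its analogue for $w_n$ are finite, by Corollary \ref{cor2.9} and remark \ref{rem2.10}.

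Next I establish equi-integrability and tightness of each family. For measurable $E\subseteq\mathbb{R}^N$, I bound $\int_E|g(x,u_n)u_n|\,|x|^{-\beta}\,\mathrm{d}x$ via \eqref{eq4.7}: the polynomial part $b_1\int_E|u_n|^{N(k+1)}|x|^{-\beta}\,\mathrm{d}x$ is controlled because $u_n\to u$ in $L^{N(k+1)}(\mathbb{R}^N,|x|^{-\beta})$ (compact embedding, Lemma \ref{lem2.8}), so $\{|u_n|^{N(k+1)}|x|^{-\beta}\}$ converges in $L^1$ and is therefore equi-integrable and tight; the exponential part is split by H\"older and Corollary \ref{cor2.9} as
\[
\int_E\frac{|u_n|\,\Phi(\alpha_0|u_n|^{N^\prime})}{|x|^\beta}\,\mathrm{d}x\leq\Big(\int_E\frac{|u_n|^{r}}{|x|^\beta}\,\mathrm{d}x\Big)^{1/r}\Big(\int_{\mathbb{R}^N}\frac{\Phi(r^\prime\alpha_0|u_n|^{N^\prime})}{|x|^\beta}\,\mathrm{d}x\Big)^{1/r^\prime},
\]
whose second factor is uniformly bounded by the previous step, and whose first factor is equi-integrable and tight since $u_n\to u$ in $L^{r}(\mathbb{R}^N,|x|^{-\beta})$ for $r\geq N$. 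The same estimates with $w_n$ in place of $u_n$ (legitimate thanks to the $w_n$-norm bound of the second paragraph, and with $w_n\to 0$ in $L^{r}(\mathbb{R}^N,|x|^{-\beta})$) handle $\{g(x,w_n)w_n|x|^{-\beta}\}$, while $g(x,u)u|x|^{-\beta}$ is a fixed $L^1$ function. An entirely parallel computation, starting from $|G(x,u)|\leq\frac{b_1}{N(k+1)}|u|^{N(k+1)}+b_2|u|\Phi(\alpha_0|u|^{N^\prime})$ (obtained by integrating the $(G1)$ bound on $g$ and using that $\Phi$ is increasing), treats part (ii).

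With each family equi-integrable and tight, and the integrands converging to $0$ a.e., Vitali's convergence theorem on $(\mathbb{R}^N,|x|^{-\beta}\,\mathrm{d}x)$ delivers both (i) and (ii), since sums and differences of equi-integrable, tight families retain these properties. I expect the genuine obstacle to be the uniform control of the exponential contribution of the difference term $g(x,w_n)w_n$: it is exactly here that one needs the $w_n$-norm estimate, hence hypothesis \eqref{eq4.8} in its sharpened form with the factor $2^{N^\prime}$, to keep $w_n$ within the range of validity of the singular Trudinger--Moser inequality. The second delicate point is tightness at infinity, where $|x|^{-\beta}\,\mathrm{d}x$ carries infinite mass; this is resolved only through the strong convergences $u_n\to u$ and $w_n\to 0$ in the weighted spaces $L^{r}(\mathbb{R}^N,|x|^{-\beta})$ supplied by the compact embedding of Lemma \ref{lem2.8}.
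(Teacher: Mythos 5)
Your proof is correct, but it takes a genuinely different route from the paper's. The paper's central device is a mean-value decomposition: writing $g(x,\vartheta_1+\vartheta_2)(\vartheta_1+\vartheta_2)-g(x,\vartheta_1)\vartheta_1=\int_0^1\tfrac{\mathrm{d}}{\mathrm{d}s}\big[g(x,\vartheta_1+s\vartheta_2)(\vartheta_1+s\vartheta_2)\big]\,\mathrm{d}s$ with $\vartheta_1=v_n:=u_n-u$ (your $w_n$) and $\vartheta_2=u$, and invoking the bound on $\partial_u g$ from $(G4)$, it dominates the \emph{whole} bracket $|g(x,u_n)u_n-g(x,v_n)v_n-g(x,u)u|$ pointwise by terms in which every $n$-dependent factor, $|v_n|^{N(k+1)-1}$ or $\Phi\big(\alpha_0(|v_n|+|u|)^{N^\prime}\big)$, is multiplied by the \emph{fixed} function $|u|$; after H\"older, equi-integrability and tightness then come for free, because the set-dependent factors involve only $u$ while the $n$-dependent factors are bounded uniformly via Lemma \ref{lem2.11}. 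The price is that the combined function $|v_n|+|u|$ must lie in the Trudinger--Moser range, which the paper achieves through the Br\'ezis--Lieb lemma (this is exactly where the a.e.\ convergence of gradients is used) plus weak lower semicontinuity, spending the factor $2^{N^\prime}$ of \eqref{eq4.8} on the sum $\|v_n\|^{N^\prime}_{W^{1,N}}+\|u\|^{N^\prime}_{W^{1,N}}$. You avoid all of this: you estimate the three families $|g(x,u_n)u_n|/|x|^\beta$, $|g(x,w_n)w_n|/|x|^\beta$, $|g(x,u)u|/|x|^\beta$ separately using only the $(G1)$ growth bound (your \eqref{eq4.7} indeed needs no $(G4)$), place $u_n$ and $w_n$ \emph{individually} in the TM range by the cruder triangle-inequality estimate (your use of $2^{N^\prime}$), and obtain equi-integrability and tightness from the compactness of Lemma \ref{lem2.8}, i.e.\ from the strong convergences $u_n\to u$ and $w_n\to 0$ in $L^r(\mathbb{R}^N,|x|^{-\beta})$; stability of these properties under sums, plus Vitali, closes the argument exactly as in the paper. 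The trade-off is instructive: your route is more elementary, dispensing with $(G4)$, with Br\'ezis--Lieb, and (as you correctly observe) with the gradient-convergence hypothesis altogether; the paper's route, on the other hand, uses only the \emph{continuity} of the weighted embeddings for the $n$-dependent quantities — never their compactness — so it would survive in settings where $(V2)$, and hence the compact embedding $\mathbf{X}\hookrightarrow L^r(\mathbb{R}^N,|x|^{-\beta})$, is unavailable. Under the standing assumptions of the paper, both arguments are complete.
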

\begin{proof}
   By the hypotheses $(G1)$, $(G4)$ and Remark \ref{rem2.10}, we have for any $\vartheta_1,\vartheta_2\in\mathbb{R}$
   \begin{align*}|g(x,\vartheta_1+\vartheta_2)(\vartheta_1+\vartheta_2)-g(x,\vartheta_1)\vartheta_1|&=\bigg|\int_{0}^{1}\frac{\mathrm{d}}{\mathrm{d}s}\big(g(x,\vartheta_1+s\vartheta_2)(\vartheta_1+s\vartheta_2)\big)~\mathrm{d}s\bigg|\\
   &=\bigg|\int_{0}^{1}\big(\partial_u g(x,\vartheta_1+s\vartheta_2)\vartheta_2(\vartheta_1+s\vartheta_2)+g(x,\vartheta_1+s\vartheta_2)\vartheta_2\big)~\mathrm{d}s\bigg| \\
   &\leq 2 \int_{0}^{1}\Big[b_1|\vartheta_1+s\vartheta_2|^{N(k+1)-1}|\vartheta_2|+b_2|\vartheta_2|\Phi(\alpha_0|\vartheta_1+s\vartheta_2|^{N^\prime})\Big]~\mathrm{d}s\\
   &\leq 2^{N(k+1)-1}b_1 \int_{0}^{1}(|\vartheta_1|^{N(k+1)-1}+|s\vartheta_2|^{N(k+1)-1})|\vartheta_2|~\mathrm{d}s\\
&\qquad+2b_2\int_{0}^{1}|\vartheta_2|\Phi\big(\alpha_0(|\vartheta_1|+s|\vartheta_2|)^{N^\prime}\big)~\mathrm{d}s\\
&\leq 2^{N(k+1)-1}b_1\Big[|\vartheta_1|^{N(k+1)-1}|\vartheta_2|+|\vartheta_2|^{N(k+1)}\Big]+2b_2|\vartheta_2|\Phi\big(\alpha_0(|\vartheta_1|+|\vartheta_2|)^{N^\prime}\big).\end{align*}
Choose $\vartheta_1=u_n-u,\vartheta_2=u$ and $v_n=u_n-u$. Thus, from the above inequality, we have
\begin{align*}
 |g(x,v_n+u)(v_n+u)-g(x,v_n)v_n|\leq 2^{N(k+1)-1}b_1\Big[|v_n|^{N(k+1)-1}|u|+|u|^{N(k+1)}\Big]
     +2b_2|u|\Phi\big(\alpha_0(|v_n|+|u|)^{N^\prime}\big).   
\end{align*}
Further, set $h_n(x)=|g(x,v_n+u)(v_n+u)-g(x,v_n)v_n-g(x,u)u|$, then we have
\begin{equation}\label{eq4.10}
h_n(x)\leq 2^{N(k+1)-1}b_1|v_n|^{N(k+1)-1}|u|+(2^{N(k+1)-1}+1)b_1|u|^{N(k+1)}
+2b_2|u|\Phi\big(\alpha_0(|v_n|+|u|)^{N^\prime}\big)+b_2|u|\Phi(\alpha_0|u|^{N^\prime}).
\end{equation}
In view of remark \ref{remark2.4}, we have  $u_n\rightharpoonup u$ in $W^{1,N}(\mathbb{R}^N)$ as $n\to\infty$ and hence $\{u_n\}_n$ is bounded in $W^{1,N}(\mathbb{R}^N)$. Therefore, by Br\'ezis-Lieb lemma, we have
$$\|v_n\|^N_{W^{1,N}}=\|u_n\|^N_{W^{1,N}}-\|u\|^N_{W^{1,N}}+o_n(1)\leq \|u_n\|^N_{W^{1,N}}+o_n(1)~\text{as}~n\to\infty. $$
This together with \eqref{eq4.8} implies
\begin{equation}\label{eq4.11}
  \limsup_{n\to\infty}\|v_n\|^{N^\prime}_{W^{1,N}}<\bigg(1-\frac{\beta}{N}\bigg)\frac{\alpha_N}{2^{N^\prime}\alpha_0}.   
\end{equation}
Using weakly lower semicontinuity of the norm, we get
\begin{equation}\label{eq4.12}
 \|u\|^{N^\prime}_{W^{1,N}}\leq \liminf_{n\to\infty}\|u_n\|^{N^\prime}_{W^{1,N}}\leq \limsup_{n\to\infty}\|u_n\|^{N^\prime}_{W^{1,N}}<\bigg(1-\frac{\beta}{N}\bigg)\frac{\alpha_N}{2^{N^\prime}\alpha_0}.   
\end{equation}
From \eqref{eq4.11} and \eqref{eq4.12}, we can easily obtain
\begin{equation}\label{eq4.13}
  \limsup_{n\to\infty}\big\||v_n|+|u|\big\|^{N^\prime}_{W^{1,N}}<\bigg(1-\frac{\beta}{N}\bigg)\frac{\alpha_N}{\alpha_0}.   
\end{equation}
It follows from \eqref{eq4.13} that there exist $m_2>0$ and $n_0\in\mathbb{N}$ sufficiently large such that $\big\||v_n|+|u|\big\|^{N^\prime}_{W^{1,N}}<m_2<\big(1-\frac{\beta}{N}\big)\frac{\alpha_N}{\alpha_0}$, for all $n\ge n_0$. Choose $r\geq N$, with $r^\prime=\frac{r}{r-1}>1$ satisfying $\frac{1}{r}+\frac{1}{r^\prime}=1$ . Let $r^\prime$  close to 1 and $\alpha>\alpha_0$  close to $\alpha_0$ such that we still have $r^\prime\alpha\big\||v_n|+|u|\big\|^{N^\prime}_{W^{1,N}}<m_2<\big(1-\frac{\beta}{N}\big)\alpha_N$, for all $n\ge n_0$. Let us choose
 $$\widetilde{v}_n=\begin{cases}
        \frac{|v_n|+|u|}{\big\||v_n|+|u|\big\|_{W^{1,N}}}~\text{if either}~u_n\neq 0~\text{or}~u\neq0,\\
        0~~~~~~~~~~~~~~~~~~~\text{if}~u_n=u= 0.
    \end{cases}$$
Hence, by using H\"older's inequality, Lemma \ref{lem2.8} and Corollary \ref{cor2.9}, we have obtained from  \eqref{eq4.10} that for any $n\geq n_0$
\begin{align*}\int_{\mathbb{R}^N}\frac{h_n(x)}{|x|^\beta}~\mathrm{d}x &\leq 2^{N(k+1)-1}b_1\mathcal{S}^{-(N(k+1)-1)}_{(N(k+1)-1)N^\prime,\beta}\|v_n\|_{\mathbf{X}}^{N(k+1)-1} \|u\|_{L^N(\mathbb{R}^N,|x|^{-\beta})}+(2^{N(k+1)-1}+1)b_1\|u\|^{N(k+1)}_{L^{N(k+1)}(\mathbb{R}^N,|x|^{-\beta})}\\
&+b_2\Bigg[\int_{\mathbb{R}^N}\cfrac{\Phi\big(r^\prime\alpha_0|u|^{N^\prime}\big)}{|x|^\beta}~\mathrm{d}x\Bigg]^{\frac{1}{r^\prime}}\|u\|_{L^r(\mathbb{R}^N,|x|^{-\beta})}+2b_2\Bigg[\int_{\mathbb{R}^N}\cfrac{\Phi\big(r^\prime\alpha_0\big\||v_n|+|u|\big\|^{N^\prime}_{W^{1,N}}|\widetilde{v}_n|^{N^\prime}\big)}{|x|^\beta}~\mathrm{d}x\Bigg]^{\frac{1}{r^\prime}}\|u\|_{L^r(\mathbb{R}^N,|x|^{-\beta})}.\end{align*}
Consequently, we have
\begin{equation}
 \int_{\mathbb{R}^N}\frac{h_n(x)}{|x|^\beta}~\mathrm{d}x\leq C_3\Bigg[\bigg(\int_{\mathbb{R}^N}\frac{|u|^N}{|x|^\beta}~\mathrm{d}x\bigg)^{\frac{1}{N}}+\int_{\mathbb{R}^N}\frac{|u|^{N(k+1)}}{|x|^\beta}~\mathrm{d}x+\bigg(\int_{\mathbb{R}^N}\frac{|u|^r}{|x|^\beta}~\mathrm{d}x\bigg)^{\frac{1}{r}}\Bigg] <\infty,  
\end{equation}
where \begin{align*}C_3&=\text{max}\Bigg\{2^{N(k+1)-1}b_1\mathcal{S}^{-(N(k+1)-1)}_{(N(k+1)-1)N^\prime,\beta}\sup_{n\in\mathbb{N}}\|v_n\|_{\mathbf{X}}^{N(k+1)-1},(2^{N(k+1)-1}+1)b_1,\\
&\qquad b_2\Bigg[\int_{\mathbb{R}^N}\cfrac{\Phi\big(r^\prime\alpha_0|u|^{N^\prime}\big)}{|x|^\beta}~\mathrm{d}x\Bigg]^{\frac{1}{r^\prime}}+2b_2\Bigg[\sup_{n\geq n_0}\int_{\mathbb{R}^N}\cfrac{\Phi\big(r^\prime\alpha_0\big\||v_n|+|u|\big\|^{N^\prime}_{W^{1,N}}|\widetilde{v}_n|^{N^\prime}\big)}{|x|^\beta}~\mathrm{d}x\Bigg]^{\frac{1}{r^\prime}}\Bigg\},\end{align*}
which is finite ( thanks to remark \ref{rem2.10}, Lemma \ref{lem2.11} and the fact that $\{v_n\}_n$ is uniformly bounded in $\mathbf{X}$). It follows that $\{h_n(x)/|x|^\beta\}_{n\geq n_0}$ is in $L^1(\mathbb{R}^N)$. Moreover, it is not difficult to verify that $\{h_n(x)/|x|^\beta\}_{n\geq n_0}$ is uniformly integrable and tight over $\mathbb{R}^N$. Further, since $u_n\to u$ a.e. in $\mathbb{R}^N$ as $n\to\infty$, therefore $h_n(x)/|x|^\beta\to 0$ a.e. in $\mathbb{R}^N$ as $n\to\infty$. Finally, the conclusion of Lemma \ref{lem4.3}-$(i)$ follows immediately by Vitali's theorem. Arguing a similar procedure, we can also prove $(ii)$. This completes the proof.    
\end{proof}
Let $e\in\mathbf{X}$ stated as in Lemma \ref{lem3.2}, $C>0$ as in $(G5)$ and there exists $s>0$ satisfying $ps>N\xi$. Now, we define 
\begin{equation}\label{eq4.15}
    \lambda^\ast_2=\text{max}\Bigg\{1,\mathcal{D}^{\frac{s-N(k+1)}{N(k+1)}}_1,\mathcal{D}^{\frac{(\xi-p(k+1))(s-N(k+1))}{p(k+1)s-N(k+1)\xi}}_2\Bigg\},
\end{equation}
where $$ \mathcal{D}_1=\Bigg[\bigg(1-\frac{\beta}{N}\bigg)\frac{\alpha_N}{2^{N^\prime}\alpha_0}\Bigg]^{(1-N)(k+1)}\frac{1}{\Upsilon\big(\text{min}\{1,V_0\}\big)^{k+1}}   $$
and $$\mathcal{D}_2=\displaystyle\sum_{r\in\{p,N\}}\Bigg[\bigg(\frac{1}{r(k+1)}-\frac{1}{\xi}\bigg)\Bigg(\frac{\|e\|_{W^{1,r}_V}}{\|e\|_{L^\xi(\mathbb{R}^N,|x|^{-\beta})}}\Bigg)^{\frac{r(k+1)\xi}{\xi-r(k+1)}}\frac{a^{\frac{\xi}{\xi-r(k+1)}}}{{C}^{\frac{r(k+1)\xi}{\xi-r(k+1)}}}\Bigg]. $$
Choosing $\lambda_2>\lambda^\ast_2$, then it is easy to see that
  $\lambda_2^{\frac{N(k+1)}{s-N(k+1)}}<\lambda_2^{\frac{p(k+1)}{\xi-p(k+1)}}/\mathcal{D}_2$. Fix $c_0>0$ such that 
\begin{equation}\label{eq4.16}
  \lambda_2^{\frac{N(k+1)}{s-N(k+1)}}<c_0^{-1}<\lambda_2^{\frac{p(k+1)}{\xi-p(k+1)}}/\mathcal{D}_2.  
\end{equation}
Whenever $\lambda_2>\lambda^\ast_2$, we have $\mathcal{D}_1<\lambda_2^{\frac{N(k+1)}{s-N(k+1)}}$. This directly implies that
\begin{equation}\label{eq4.17}
    \lambda_2^{-{\frac{N(k+1)}{s-N(k+1)}}}<\Upsilon\big(\text{min}\{1,V_0\}\big)^{k+1} \Bigg[\bigg(1-\frac{\beta}{N}\bigg)\frac{\alpha_N}{2^{N^\prime}\alpha_0}\Bigg]^{(N-1)(k+1)}.
\end{equation}
Combining \eqref{eq4.16} and \eqref{eq4.17} together, we have
\begin{equation}\label{eq4.18}
    c_0 <\Upsilon\big(\text{min}\{1,V_0\}\big)^{k+1} \Bigg[\bigg(1-\frac{\beta}{N}\bigg)\frac{\alpha_N}{2^{N^\prime}\alpha_0}\Bigg]^{(N-1)(k+1)}.
\end{equation}
\begin{lemma}\label{lem4.4}
 Suppose $\{u_n\}_n$ be a $(PS)_c$ sequence for $J_{\lambda_1,\lambda_2}$ in $\mathbf{X}$ and $u$ be in $\mathbf{X}$. Let the assumptions of Lemma \ref{lem4.1} and  Lemma \ref{lem4.2} are satisfied. Then up to a subsequence $\nabla u_n\to \nabla u$ a.e. in $\mathbb{R}^N$ as $n\to\infty$.   
\end{lemma}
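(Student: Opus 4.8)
The plan is to test the $(PS)$ condition against $u_n-u$ and reduce the almost-everywhere convergence of the gradients to the $L^1$-vanishing of a nonnegative monotonicity integrand, organizing the argument by the same case analysis on the degenerate Kirchhoff term used in the proof of Theorem \ref{thm1.1}. First I would invoke Lemma \ref{lem4.1} to get that $\{u_n\}_n$ is bounded in $\mathbf{X}$, so that, up to a subsequence, $u_n\rightharpoonup u$ in $\mathbf{X}$, $u_n\to u$ in $L^\tau(\mathbb{R}^N)$ for $\tau\in[p,p^\ast)\cup[N,\infty)$ by Corollary \ref{cor2.6} and in $L^r(\mathbb{R}^N,|x|^{-\beta})$ for $r\geq N$ by Lemma \ref{lem2.8}, with $u_n\to u$ a.e.\ and $\|u_n\|_{W^{1,r}_V}\to t_r\geq 0$ for $r\in\{p,N\}$; by continuity of $M$ in $(M)$, $M(\|u_n\|^r_{W^{1,r}_V})\to M(t_r^r)$. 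Since $J^\prime_{\lambda_1,\lambda_2}(u_n)\to 0$ in $\mathbf{X}^\ast$ and $\{u_n-u\}_n$ is bounded, $\langle J^\prime_{\lambda_1,\lambda_2}(u_n),u_n-u\rangle\to 0$.

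The main obstacle is to show that the two nonlinear terms of \eqref{equation2.33} evaluated at $v=u_n-u$ vanish in the limit. For the weight term, Hölder's inequality together with the compact embedding in Lemma \ref{lem2.7} (exactly as in \eqref{eq3.19}) yields $\int_{\mathbb{R}^N}h(x)u_n^{q-1}(u_n-u)\,\mathrm{d}x\to 0$. The singular exponential term is the delicate one: reproducing the chain of estimates leading to \eqref{eq3.18}, I would split $g$ via $(G1)$, apply Hölder's inequality, Corollary \ref{cor2.9} and Lemma \ref{lem2.8}, and crucially use the threshold bound \eqref{eq4.4} from Lemma \ref{lem4.2}, namely $\limsup_{n}\|u_n\|^{N^\prime}_{W^{1,N}}<\big(1-\tfrac{\beta}{N}\big)\tfrac{\alpha_N}{\alpha_0}$, to choose $r^\prime>1$ and $\alpha>\alpha_0$ close to $1$ and $\alpha_0$ so that $\int_{\mathbb{R}^N}\Phi(r^\prime\alpha_0|u_n|^{N^\prime})/|x|^\beta\,\mathrm{d}x$ is uniformly bounded by Lemma \ref{lem2.11} and remark \ref{rem2.10}; the strong convergence $u_n\to u$ in $L^r(\mathbb{R}^N,|x|^{-\beta})$ then forces $\int_{\mathbb{R}^N}g(x,u_n)(u_n-u)/|x|^\beta\,\mathrm{d}x\to 0$. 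Combining these facts with $M(\|u_n\|^r_{W^{1,r}_V})\to M(t_r^r)$ gives $\sum_{r\in\{p,N\}}M(t_r^r)\langle u_n,u_n-u\rangle_{r,V}\to 0$, and since $\langle u,u_n-u\rangle_{r,V}\to 0$ by weak convergence I deduce that the weighted sum $\sum_{r\in\{p,N\}}M(t_r^r)\big(I_n^r+\mathcal{J}_n^r\big)\to 0$, where $I_n^r=\int_{\mathbb{R}^N}(|\nabla u_n|^{r-2}\nabla u_n-|\nabla u|^{r-2}\nabla u)\cdot(\nabla u_n-\nabla u)\,\mathrm{d}x\geq 0$ and $\mathcal{J}_n^r=\int_{\mathbb{R}^N}V(x)(|u_n|^{r-2}u_n-|u|^{r-2}u)(u_n-u)\,\mathrm{d}x\geq 0$ by convexity of $t\mapsto|t|^r$ and $(V1)$.

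Because every summand is nonnegative, each must vanish separately. Exactly as in the proof of Theorem \ref{thm1.1}, the case $t_p=t_N=0$ is impossible: it would give $u_n\to 0$ in $\mathbf{X}$, hence $c=\lim_n J_{\lambda_1,\lambda_2}(u_n)=J_{\lambda_1,\lambda_2}(0)=0$, contradicting $c>0$. Thus at least one $t_r>0$, say $t_N>0$, so that $M(t_N^N)=a\,t_N^{Nk}>0$ and therefore $I_n^N\to 0$. Finally, $I_n^N\geq 0$ together with $I_n^N\to 0$ means the integrand $d_n(x)=(|\nabla u_n|^{N-2}\nabla u_n-|\nabla u|^{N-2}\nabla u)\cdot(\nabla u_n-\nabla u)\geq 0$ tends to $0$ in $L^1(\mathbb{R}^N)$, so along a further subsequence $d_n(x)\to 0$ for a.e.\ $x$. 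At such $x$ a standard pointwise argument using the strict monotonicity of $\xi\mapsto|\xi|^{N-2}\xi$ closes the proof: if $\nabla u_n(x)\not\to\nabla u(x)$, then either a bounded subsequence of $\nabla u_n(x)$ converges to some $\zeta\neq\nabla u(x)$, forcing $(|\zeta|^{N-2}\zeta-|\nabla u(x)|^{N-2}\nabla u(x))\cdot(\zeta-\nabla u(x))=0$ and contradicting strict monotonicity, or $|\nabla u_n(x)|\to\infty$, forcing $d_n(x)\to\infty$; both are impossible, so $\nabla u_n(x)\to\nabla u(x)$ for a.e.\ $x$. The subtle point throughout is the exponential estimate, where the precise constant in \eqref{eq4.4} supplied by Lemma \ref{lem4.2} is exactly what keeps the Trudinger--Moser integrals uniformly bounded.
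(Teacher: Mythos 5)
Your proposal is correct, but it takes a genuinely different route from the paper's own proof. You transplant the global argument from the proof of Theorem \ref{thm1.1}: test $J^\prime_{\lambda_1,\lambda_2}(u_n)$ against $u_n-u$, kill the weight term via the compact embedding of Corollary \ref{cor2.6}, kill the singular exponential term via Lemma \ref{lem2.8} together with the uniform Trudinger--Moser bound that \eqref{eq4.4} makes available (exactly the mechanism behind \eqref{eq3.18}), deduce that the weighted sum of nonnegative monotonicity integrals vanishes, and then convert $L^1$-vanishing of the nonnegative integrand $d_n$ into a.e.\ convergence of gradients by a pointwise strict-monotonicity argument (your bounded/unbounded dichotomy for $\nabla u_n(x)$, with the lower bound $d_n\geq(|\nabla u_n|^{N-1}-|\nabla u|^{N-1})(|\nabla u_n|-|\nabla u|)$ excluding blow-up, is sound). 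The paper instead localizes: it freezes the Kirchhoff coefficients in the auxiliary functional $I_{\lambda_1,\lambda_2}$, tests against $(u_n-u)\psi$ with a cutoff $\psi$ supported in $B_{2R}$, handles the singular weight through its local integrability ($\int_{B_{2R}}|x|^{-\beta t}\,\mathrm{d}x<\infty$ for $t<N/\beta$) and the nonlinear terms through the local compact embeddings of Corollary \ref{cor2.3}, and then applies Simon's inequality on $B_R$ to obtain the stronger conclusion $u_n\to u$ in $W^{1,N}(B_R)$ for every $R>0$, from which a.e.\ convergence follows by exhaustion. The trade-offs: the paper's localization uses only local compactness (so this step does not lean on the global weighted embeddings, hence not on $(V2)$) and yields strong $W^{1,N}_{\mathrm{loc}}$ convergence; your version is shorter, reuses machinery already established for Theorem \ref{thm1.1}, and treats the degenerate cases more uniformly, since any single $t_r>0$ feeds your monotonicity argument, whereas the paper dispatches the cases $l_p=0$ or $l_N=0$ separately and reserves the main computation for $l_p\neq 0\neq l_N$. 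One presentational remark: your phrase ``say $t_N>0$'' should be read as a genuine without-loss-of-generality, i.e.\ if instead only $t_p>0$ you run the identical pointwise argument with $r=p$, which is legitimate because $\xi\mapsto|\xi|^{p-2}\xi$ is strictly monotone for every $p>1$.
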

\begin{proof}
 Let $\{u_n\}_n$ be a $(PS)_c$ sequence for $J_{\lambda_1,\lambda_2}$ in $\mathbf{X}$ and $u$ be in $\mathbf{X}$. It follows that 
 \begin{equation}\label{eq4.19}
  J_{\lambda_1,\lambda_2}(u_n)\to c~\text{and}~ J^\prime_{\lambda_1,\lambda_2}(u_n)\to 0~~\text{in}~\mathbf{X^\ast}~\text{as} ~n\to\infty. 
 \end{equation}
 By Lemma \ref{lem4.1} and  Lemma \ref{lem4.2}, we have $\{u_n\}_n$ is bounded in $\mathbf{X}$ and \eqref{eq4.4} holds. Hence, up to a subsequence, $u_n\rightharpoonup u$ in $\mathbf{X}$ as $n\to\infty$. Arguing similarly as in Lemma \ref{lem3.4}, we can prove that $\{u_n\}_n$ and $u$ are nonnegative in $\mathbf{X}$. Moreover, by remark \ref{remark2.4} and Corollary \ref{cor2.3}, we obtain 
 \begin{equation}\label{eq4.20}
 \begin{cases}
   u_n\rightharpoonup u~\text{in}~W^{1,r}_V(\mathbb{R}^N)~\text{for}~r\in\{p,N\},\\
   u_n\to u ~\text{in}~L^\vartheta _{\text{loc}}(\mathbb{R}^N)~\text{for any}~\vartheta\in[1,\infty),\\
   \|u_n\|_{W^{1,p}_V}\to l_p,\|u_n\|_{W^{1,N}_V}\to l_N,~\text{where}~l_p,l_N\geq 0,\\
   u_n\to u~\text{a.e. in}~\mathbb{R}^N~\text{as}~n\to\infty.
 \end{cases}
 \end{equation} 
Let $l_p=l_N=0$, then $u_n\to 0$ in $\mathbf{X}$ as $n\to\infty$. Therefore, we infer from \eqref{eq4.19} that $0=J_{\lambda_1,\lambda_2}(0)=c>0$, which is a contradiction. If $l_p=0$ and $l_N\neq 0$ or $l_p\neq0$ and $l_N= 0$, then it is easy to see that $\nabla u_n\to 0$ a.e. in $\mathbb{R}^N$ as $n\to\infty$ and hence the proof is completed. Now we shall discuss the case when $l_p\neq0$ and $l_N\neq 0$. For this, define the functional $I_{\lambda_1,\lambda_2}:\mathbf{X}\to \mathbb{R} $  by
\begin{align*} I_{\lambda_1,\lambda_2}(u)&=\frac{M(l^p_p)}{p}\int_{\mathbb{R}^N}(|\nabla u|^p+V(x)|u|^p)~\mathrm{d}x+\frac{M(l^N_N)}{N}\int_{\mathbb{R}^N}(|\nabla u|^N+V(x)|u|^N)~\mathrm{d}x\\
&\qquad-\frac{\lambda_1}{q}\int_{\mathbb{R}^N}h(x)|u|^q~\mathrm{d}x-\lambda_2\int_{\mathbb{R}^N}\cfrac{G(x,u)}{|x|^\beta}~\mathrm{d}x.  \end{align*}
Clearly, $I_{\lambda_1,\lambda_2}\in \mathcal{C}^1(\mathbf{X},\mathbb{R})$. Choose $R>0$, $\psi\in \mathcal{C}^\infty(\mathbb{R}^N)$ with $0\leq \psi\leq 1$ in $\mathbb{R}^N$, $\psi\equiv1$ in $B_R$ and $\psi\equiv0$ in $B^c_{2R}$. Since $u_n\rightharpoonup u$ in $\mathbf{X}$ and $J^\prime_{\lambda_1,\lambda_2}(u_n)\to 0$ in $\mathbf{X}^\ast$ as $n\to\infty$, therefore we have 
\begin{equation}\label{eq4.21}
 \langle{J^\prime_{\lambda_1,\lambda_2}(u_n)-I^\prime_{\lambda_1,\lambda_2}(u),(u_n-u)\psi}\rangle=o_n(1)~\text{as}~n\to\infty.   
\end{equation}
By convexity and $(V1)$, we have
\begin{equation}\label{eq4.22}
    \begin{split}
     \big(|\nabla u_n|^{p-2}\nabla u_n-|\nabla u|^{p-2}\nabla u\big).\big(\nabla u_n-\nabla u\big)\geq 0~\text{a.e. in}~\mathbb{R}^N\\~\text{and}~
    V(x)\big(|u_n|^{p-2} u_n-| u|^{p-2} u\big)\big( u_n- u\big)\geq 0~\text{a.e. in}~\mathbb{R}^N
    \end{split}
\end{equation}
for any $n\in\mathbb{N}$. Thus, by Simon's inequality with $N\geq 2$, there exists $c_N>0$ such that
\begin{align*}c^{-1}_N M(l_N^N)\|u_n-u\|^N_{W^{1,N}_V(B_R)}&= c^{-1}_N M(l_N^N)\bigg[\int_{B_R}|\nabla u_n-\nabla u|^N~\mathrm{d}x+\int_{B_R}V(x)| u_n- u|^N~\mathrm{d}x\bigg]\\
&\leq\sum_{r\in\{p,N\}}\Bigg( M(l_r^r)\Bigg[\int_{B_R}\big(|\nabla u_n|^{r-2}\nabla u_n-|\nabla u|^{r-2}\nabla u\big).\big(\nabla u_n-\nabla u\big)~\mathrm{d}x\\
&\qquad +\int_{B_R}V(x)\big(u_n^{r-1}- u^{r-1} \big)\big( u_n- u\big)~\mathrm{d}x\Bigg] \Bigg)\\
&\leq \sum_{r\in\{p,N\}}\Bigg(M(l_r^r)\Bigg[\int_{\mathbb{R}^N}\big(|\nabla u_n|^{r-2}\nabla u_n-|\nabla u|^{r-2}\nabla u\big).\big(\nabla u_n-\nabla u\big)\psi~\mathrm{d}x\\
&\qquad +\int_{\mathbb{R}^N}V(x)\big(u_n^{r-1}- u^{r-1} \big)\big( u_n- u\big)\psi~\mathrm{d}x\Bigg]\Bigg)\\
& = -\sum_{r\in\{p,N\}} \Bigg(M(l_r^r)\Bigg[\int_{\mathbb{R}^N}\big(|\nabla u_n|^{r-2}\nabla u_n-|\nabla u|^{r-2}\nabla u\big).\nabla\psi( u_n-u)~\mathrm{d}x\Bigg]\Bigg)\\
&\qquad + \lambda_1\int_{\mathbb{R}^N}h(x)(u_n^{q-1}-u^{q-1})(u_n-u)\psi~\mathrm{d}x\end{align*} 
    \begin{equation}\label{eq4.23}
     \hspace{3.5cm}+\lambda_2\int_{\mathbb{R}^N}\cfrac{(g(x,u_n)-g(x,u))(u_n-u)\psi}{|x|^\beta}~\mathrm{d}x+o_n(1)~\text{as} ~n\to\infty,   
    \end{equation}
thanks to \eqref{eq4.21}. In view of \eqref{eq4.20}, by H\"older's inequality, we get for $r\in\{p,N\}$ 
\begin{align*}\bigg|\int_{\mathbb{R}^N}\big(|\nabla u_n|^{r-2}\nabla u_n-|\nabla u|^{r-2}\nabla u\big).\nabla\psi( u_n-u)~\mathrm{d}x\bigg|
&\leq \|\nabla\psi\|_\infty\big(\|\nabla u_n\|^{r-1}_r+\|\nabla u\|^{r-1}_r\big) \bigg(\int_{B_{2R}}|u_n-u|^r~\mathrm{d}x\bigg)^{\frac{1}{r}}\\
&\to 0~\text{as}~n\to\infty.\end{align*}
It follows that for $r\in\{p,N\}$, we have
\begin{equation}\label{eq4.24}
 \lim_{n\to\infty} \int_{\mathbb{R}^N}\big(|\nabla u_n|^{r-2}\nabla u_n-|\nabla u|^{r-2}\nabla u\big).\nabla\psi( u_n-u)~\mathrm{d}x=0.  
\end{equation}
Similarly, using H\"older's inequality, we have
$$\bigg|\int_{\mathbb{R}^N}h(x)(u_n^{q-1}-u^{q-1})(u_n-u)\psi~\mathrm{d}x\bigg| \leq \big(\|u_n\|^{q-1}_{q,h}+\|u\|^{q-1}_{q,h}\big)\|h\|^{\frac{1}{q}}_\eta \bigg(\int_{B_{2R}}|u_n-u|^N~\mathrm{d}x\bigg)^{\frac{1}{N}}\to 0~\text{as}~n\to\infty.$$
Hence, we get 
\begin{equation}\label{eq4.25}
 \lim_{n\to\infty}\int_{\mathbb{R}^N}h(x)(u_n^{q-1}-u^{q-1})(u_n-u)\psi~\mathrm{d}x =0.  
\end{equation}
Set $$ I_1=b_1\int_{B_{2R}}\cfrac{\big(|u_n|^{N(k+1)-1}+|u|^{N(k+1)-1}\big)|u_n-u|}{|x|^\beta}~\mathrm{d}x. $$
Choose $1<t<\frac{N}{\beta}$ and $1<s<\frac{N}{t\beta}$, then using  H\"older's inequality, one can easily obtain ( where we use $\frac{1}{\varsigma}+\frac{1}{\varsigma^\prime}=1$ for $\varsigma=t,s$)
\begin{align*}I_1 &\leq b_1\Bigg[\bigg(\int_{B_{2R}}\cfrac{|u_n|^{(N(k+1)-1)t}}{|x|^{\beta t}}~\mathrm{d}x\bigg)^{\frac{1}{t}}+\bigg(\int_{B_{2R}}\cfrac{|u|^{(N(k+1)-1)t}}{|x|^{\beta t}}~\mathrm{d}x\bigg)^{\frac{1}{t}}\Bigg]\|u_n-u\|_{L^{t^\prime}(B_{2R})} \\
&\leq b_1\big(\|u_n\|^{N(k+1)-1}_{L^{(N(k+1)-1)ts^\prime}{(B_{2R})}}+\|u\|^{N(k+1)-1}_{L^{(N(k+1)-1)ts^\prime}{(B_{2R})}}\big)\bigg(\int_{B_{2R}}\cfrac{\mathrm{d}x}{|x|^{\beta ts}}\bigg)^{\frac{1}{st}}\|u_n-u\|_{L^{t^\prime}(B_{2R})} \to 0~\text{as}~n\to\infty, \end{align*}
thanks to \eqref{eq4.20} and the fact that $\{u_n\}_n$ is bounded in $L^{(N(k+1)-1)ts^\prime}{(B_{2R})}$. Further, since \eqref{eq4.4} holds, therefore there exist $m_3>0$ and $n_0\in\mathbb{N}$ sufficiently large such that $\|u_n\|^{N^\prime}_{W^{1,N}}<m_3<\big(1-\frac{\beta}{N}\big)\frac{\alpha_N}{\alpha_0}$, for all $n\geq n_0$. Choose $r> 1$, with $r^\prime=\frac{r}{r-1}>1$ satisfying $\frac{1}{r}+\frac{1}{r^\prime}=1$ . Let $r^\prime$ close to 1 and $\alpha>\alpha_0$ close to $\alpha_0$ such that we still have $r^\prime\alpha\|u_n\|^{N^\prime}_{W^{1,N}}<m_3<\big(1-\frac{\beta}{N}\big)\alpha_N$, for all $n\geq n_0$. Let us assume that \eqref{eq3.333} is satisfied. Thus by H\"older's inequality, Corollary \ref{cor2.9} and \eqref{eq4.20}, we have  \begin{align*}I_2&=b_2 \int_{B_{2R}}\cfrac{\Phi\big(\alpha_0|u_n|^{N^\prime}\big)+\Phi\big(\alpha_0|u|^{N^\prime}\big)}{|x|^\beta}|u_n-u|~\mathrm{d}x\\&\leq b_2\Bigg(\Bigg\|\frac{\Phi\big(\alpha_0|u_n|^{N^\prime}\big)}{|x|^{\frac{\beta}{r^\prime}}}\Bigg\|_{r^\prime}+\Bigg\|\frac{\Phi\big(\alpha_0|u|^{N^\prime}\big)}{|x|^{\frac{\beta}{r^\prime}}}\Bigg\|_{r^\prime}\Bigg)\bigg(\int_{B_{2R}}\frac{|u_n-u|^r}{|x|^\beta}~\mathrm{d}x\bigg)^{\frac{1}{r}}\\
&\leq b_2 \Bigg(\Bigg[\int_{\mathbb{R}^N}\cfrac{\Phi\big(r^\prime\alpha_0\|u_n\|^{N^\prime}_{W^{1,N}}|\widetilde{u}_n|^{N^\prime}\big)}{|x|^\beta}~\mathrm{d}x\Bigg]^{\frac{1}{r^\prime}}+\Bigg[\int_{\mathbb{R}^N}\cfrac{\Phi\big(r^\prime\alpha_0|u|^{N^\prime}\big)}{|x|^\beta}~\mathrm{d}x\Bigg]^{\frac{1}{r^\prime}}\Bigg) \bigg(\int_{B_{2R}}\cfrac{\mathrm{d}x}{|x|^{\beta t}}\bigg)^{\frac{1}{rt}}\|u_n-u\|_{L^{rt^\prime}(B_{2R})} \\
& \leq C_4 \bigg(\int_{B_{2R}}\cfrac{\mathrm{d}x}{|x|^{\beta t}}\bigg)^{\frac{1}{rt}}\|u_n-u\|_{L^{rt^\prime}(B_{2R})}\to 0~\text{as}~n\to\infty,\end{align*}
where $$C_4= b_2 \Bigg(\Bigg[\sup_{n\geq n_0}\int_{\mathbb{R}^N}\cfrac{\Phi\big(r^\prime\alpha_0\|u_n\|^{N^\prime}_{W^{1,N}}|\widetilde{u}_n|^{N^\prime}\big)}{|x|^\beta}~\mathrm{d}x\Bigg]^{\frac{1}{r^\prime}}+\Bigg[\int_{\mathbb{R}^N}\cfrac{\Phi\big(r^\prime\alpha_0|u|^{N^\prime}\big)}{|x|^\beta}~\mathrm{d}x\Bigg]^{\frac{1}{r^\prime}}\Bigg)<\infty, $$
thanks to remark \ref{rem2.10} and Lemma \ref{lem2.11}. Moreover, by $(G1)$, we have
$$\bigg|\int_{\mathbb{R}^N}\cfrac{(g(x,u_n)-g(x,u))(u_n-u)\psi}{|x|^\beta}~\mathrm{d}x\bigg|\leq \int_{B_{2R}}\cfrac{(|g(x,u_n)|+|g(x,u)|)|u_n-u|}{|x|^\beta}~\mathrm{d}x \leq I_1+I_2\to 0~\text{as}~n\to\infty. $$
It follows that 
\begin{equation}\label{eq4.26}
  \lim_{n\to\infty} \int_{\mathbb{R}^N}\cfrac{(g(x,u_n)-g(x,u))(u_n-u)\psi}{|x|^\beta}~\mathrm{d}x=0.  
\end{equation}
Passing $n\to\infty$ in \eqref{eq4.23} and using \eqref{eq4.24}--\eqref{eq4.26}, we get $u_n\to u$ in $W^{1,N}(B_{R})$ as $n\to\infty$. Consequently, $\nabla u_n\to \nabla u$ in $L^{N}(B_{R})$ as $n\to\infty$, for all $R>0$. Hence, up to a subsequence $\nabla u_n\to \nabla u$ a.e. in $\mathbb{R}^N$ as $n\to\infty$. This completes the proof. 
\end{proof}
\begin{lemma}\label{lem4.5}
 Suppose $\lambda_1\in(0,\lambda_1^\ast]$, where $\lambda_1^\ast$ as in Lemma \ref{lem3.3} and $\lambda_2>\lambda^\ast_2$, with $\lambda^\ast_2$ defined as in \eqref{eq4.15}. Assume $c<c_0-\Gamma\lambda_1^{\sigma}$ as in Lemma \ref{lem4.2}, where $c_0>0$ satisfying \eqref{eq4.18} and $\Gamma$ as in Lemma \ref{lem4.1}. Then, every $(PS)_c$ sequence for $J_{\lambda_1,\lambda_2}$ in $\mathbf{X}$ converges strongly to a function $v_{\lambda_1,\lambda_2}\in\mathbf{X}$, which is a nontrivial solution of \eqref{main problem} at level $c$.
\end{lemma}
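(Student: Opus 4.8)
The plan is to first pin down the exact compactness regime forced by the energy level, then identify the weak limit as a solution of an auxiliary ``frozen-coefficient'' problem, and finally upgrade weak convergence to strong convergence by balancing the degenerate Kirchhoff coefficients. Let $\{u_n\}_n$ be a $(PS)_c$ sequence. By Lemma \ref{lem4.1} it is bounded, so up to a subsequence $u_n\rightharpoonup u$ in $\mathbf{X}$, $u_n\to u$ a.e. in $\mathbb{R}^N$ (Corollary \ref{cor2.3}), and $\|u_n\|_{W^{1,r}_V}\to l_r\ge 0$ for $r\in\{p,N\}$; in particular $M(\|u_n\|^r_{W^{1,r}_V})\to M(l_r^r)$. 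The first key point is that the constant $c_0$ in \eqref{eq4.18} is tuned precisely so that the critical exponential regime is subcritical: combining Lemma \ref{lem4.1} and Lemma \ref{lem2.2} as in \eqref{eq4.6} with $c<c_0-\Gamma\lambda_1^{\sigma}$, and using the identity $N(k+1)=N^\prime(N-1)(k+1)$, I would take a root of \eqref{eq4.18} to sharpen the bound \eqref{eq4.4} of Lemma \ref{lem4.2} into the stronger subthreshold bound \eqref{eq4.8}. Together with Lemma \ref{lem4.4}, which supplies $\nabla u_n\to\nabla u$ a.e., this makes all the hypotheses of the Br\'ezis--Lieb type Lemma \ref{lem4.3} available.

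Next I would show that $u$ solves the auxiliary equation $I^\prime_{\lambda_1,\lambda_2}(u)=0$, where $I_{\lambda_1,\lambda_2}$ is the frozen functional of Lemma \ref{lem4.4} carrying the limiting coefficients $M(l_p^p)$, $M(l_N^N)$. This is done by passing to the limit in $\langle J^\prime_{\lambda_1,\lambda_2}(u_n),\varphi\rangle\to 0$ for $\varphi\in\mathbf{X}$: the principal part converges because the a.e. convergence of $\nabla u_n$ from Lemma \ref{lem4.4} yields $|\nabla u_n|^{r-2}\nabla u_n\rightharpoonup|\nabla u|^{r-2}\nabla u$ in $L^{r^\prime}(\mathbb{R}^N)$, combined with $M(\|u_n\|^r_{W^{1,r}_V})\to M(l_r^r)$; the subcritical term is handled by the compact embedding $\mathbf{X}\hookrightarrow\hookrightarrow L^q_h(\mathbb{R}^N)$ of Lemma \ref{lem2.7}; and the critical term $\int_{\mathbb{R}^N} g(x,u_n)\varphi\,|x|^{-\beta}\,\mathrm{d}x\to\int_{\mathbb{R}^N} g(x,u)\varphi\,|x|^{-\beta}\,\mathrm{d}x$ follows from $u_n\to u$ a.e. and the uniform integrability furnished by H\"older's inequality, Corollary \ref{cor2.9}, Lemma \ref{lem2.8} and the singular Trudinger--Moser inequality (Lemma \ref{lem2.11}) in the subthreshold regime \eqref{eq4.8}. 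Testing $I^\prime_{\lambda_1,\lambda_2}(u)=0$ with $u$ then produces the identity $M(l_p^p)\|u\|^p_{W^{1,p}_V}+M(l_N^N)\|u\|^N_{W^{1,N}_V}=\lambda_1\int_{\mathbb{R}^N} h(x)u^{q}\,\mathrm{d}x+\lambda_2\int_{\mathbb{R}^N} g(x,u)u\,|x|^{-\beta}\,\mathrm{d}x$.

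To upgrade to strong convergence I would evaluate $\langle J^\prime_{\lambda_1,\lambda_2}(u_n),u_n\rangle\to 0$. Here Lemma \ref{lem2.7} again disposes of the $h$-term, and for the critical term I would use Lemma \ref{lem4.3}$(i)$ together with the vanishing $\int_{\mathbb{R}^N} g(x,v_n)v_n\,|x|^{-\beta}\,\mathrm{d}x\to 0$ for $v_n:=u_n-u\rightharpoonup 0$ (which follows from the compactness $\mathbf{X}\hookrightarrow\hookrightarrow L^q(\mathbb{R}^N,|x|^{-\beta})$ of Lemma \ref{lem2.8} and the splitting bound \eqref{eq4.11}), so that $\int_{\mathbb{R}^N} g(x,u_n)u_n\,|x|^{-\beta}\,\mathrm{d}x\to\int_{\mathbb{R}^N} g(x,u)u\,|x|^{-\beta}\,\mathrm{d}x$. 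Subtracting from the frozen identity of the previous step gives $M(l_p^p)\big(l_p^p-\|u\|^p_{W^{1,p}_V}\big)+M(l_N^N)\big(l_N^N-\|u\|^N_{W^{1,N}_V}\big)=0$. Since each summand is nonnegative by weak lower semicontinuity of the norm and $M\ge 0$, and since $l_r=0$ already forces $\|u\|_{W^{1,r}_V}=0=l_r$, I conclude $\|u\|_{W^{1,r}_V}=l_r$ for $r\in\{p,N\}$. Weak convergence together with convergence of the norms in the uniformly convex spaces $W^{1,r}_V(\mathbb{R}^N)$ (equivalently, the $(S)_+$ property of Lemma \ref{lem2.12} in the nondegenerate case $l_p,l_N>0$) then yields $u_n\to u$ strongly in $\mathbf{X}$.

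Finally, strong convergence gives $M(\|u_n\|^r_{W^{1,r}_V})\to M(\|u\|^r_{W^{1,r}_V})$, so the coefficients now match and $u$ is a genuine weak solution of \eqref{main problem}; moreover $J_{\lambda_1,\lambda_2}(u)=\lim_n J_{\lambda_1,\lambda_2}(u_n)=c>0=J_{\lambda_1,\lambda_2}(0)$ forces $u\ne 0$, and the argument of Lemma \ref{lem3.4} gives $u\ge 0$, so $v_{\lambda_1,\lambda_2}:=u$ is the desired nontrivial nonnegative solution at level $c$. The main obstacle is precisely the degeneracy of $M$: one cannot deduce $\langle\mathscr{L}^\prime(u_n)-\mathscr{L}^\prime(u),u_n-u\rangle\to 0$ directly from the $(PS)$ condition, because the factors $M(l_r^r)$ may vanish. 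Routing the argument through the frozen problem and the coefficient-balanced identity is what neutralizes this degeneracy uniformly across all cases, the only genuinely excluded configuration $l_p=l_N=0$ being ruled out by $c>0$.
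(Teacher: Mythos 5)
Your proof is correct and rests on the same pillars as the paper's---boundedness from Lemma \ref{lem4.1}, the subthreshold bound \eqref{eq4.8} extracted from \eqref{eq4.18} exactly as you describe, a.e.\ gradient convergence from Lemma \ref{lem4.4}, the Br\'ezis--Lieb Lemma \ref{lem4.3}, and the vanishing of $\int_{\mathbb{R}^N}g(x,u_n-u)(u_n-u)|x|^{-\beta}\,\mathrm{d}x$ as in \eqref{eq4.30}---but your endgame is organized differently. The paper never identifies the weak limit as a critical point of the frozen functional (it introduces $I_{\lambda_1,\lambda_2}$ only inside Lemma \ref{lem4.4}, for a localized test-function argument); instead it pairs $J^\prime_{\lambda_1,\lambda_2}(u_n)$ with $u_n-v_{\lambda_1,\lambda_2}$, handles the critical term one-sidedly by Fatou's lemma \eqref{eq4.31} combined with Lemma \ref{lem4.3} and \eqref{eq4.30}, and uses the Br\'ezis--Lieb norm splitting \eqref{eq4.28} to arrive at \eqref{eq4.33}, namely $\sum_{r\in\{p,N\}}M(l_r^r)\lim_n\|u_n-v_{\lambda_1,\lambda_2}\|^r_{W^{1,r}_V}=0$, from which strong convergence in the only surviving case $l_p,l_N>0$ is immediate, with no appeal to uniform convexity. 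Your coefficient-balanced identity $\sum_{r\in\{p,N\}}M(l_r^r)\big(l_r^r-\|u\|^r_{W^{1,r}_V}\big)=0$ is precisely \eqref{eq4.33} rewritten through \eqref{eq4.28}; you reach it by first proving $I^\prime_{\lambda_1,\lambda_2}(u)=0$, which costs an extra Vitali-type argument to pass $g(x,u_n)\to g(x,u)$ against arbitrary test functions (the estimates are the same as those behind \eqref{eq3.18} and inside Lemma \ref{lem4.4}, so this step is legitimate, just not free), then testing with $u$, passing to the limit in $\langle J^\prime_{\lambda_1,\lambda_2}(u_n),u_n\rangle\to 0$, subtracting, and concluding with weak lower semicontinuity plus the Radon--Riesz property of the uniformly convex spaces $W^{1,r}_V(\mathbb{R}^N)$ (equivalently, via Lemma \ref{lem2.12}). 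Your route buys the extra information that the weak limit solves the frozen problem, and it compresses the paper's three-case analysis into a single identity; the paper's route is more economical, since Fatou's one-sided bound makes the identification of the limit equation unnecessary before compactness is established. Two points you should make explicit: run the nonnegativity argument of Lemma \ref{lem3.4} at the start rather than the end (as the paper does), since $J_{\lambda_1,\lambda_2}$ carries $(u^+)^{q-1}$ and your two tested identities are cleanest for nonnegative $u_n$; and note that in the mixed case $l_p=0$, $l_N\neq 0$ your conclusion ``$\|u\|_{W^{1,r}_V}=l_r$ for both $r$'' is self-contradictory ($\|u\|_{W^{1,p}_V}=0$ forces $u=0$ a.e., hence $\|u\|_{W^{1,N}_V}=0\neq l_N$), which is exactly the contradiction that rules this configuration out---so it is not that the mixed cases are ``handled uniformly,'' but that they cannot occur, and this deserves a sentence rather than being left implicit.
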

\begin{proof}
  Let $\{u_n\}_n$ be a $(PS)_c$ sequence for $J_{\lambda_1,\lambda_2}$ in $\mathbf{X}$. Thus \eqref{eq4.19} holds. By Lemma \ref{lem4.1}, $\{u_n\}_n$ is bounded in $\mathbf{X}$. Hence, without loss of generality, there exists $v_{\lambda_1,\lambda_2}\in\mathbf{X}$ such that up to a subsequence, $u_n\rightharpoonup v_{\lambda_1,\lambda_2}$ in $\mathbf{X}$ as $n\to\infty$. Repeating similarly as in Lemma \ref{lem3.4}, we can prove $\{u_n\}_n$ and $v_{\lambda_1,\lambda_2}$ are nonnegative in $\mathbf{X}$. Moreover, we deduce from remark \ref{remark2.4}, Corollary \ref{cor2.6}, Lemma \ref{lem2.8} and Lemma \ref{lem4.4} that
  \begin{equation}\label{eq4.27}
   \begin{cases}
       u_n\rightharpoonup v_{\lambda_1,\lambda_2} ~\text{in}~W_V^{1,r}(\mathbb{R}^N)~\text{and}~W^{1,r}(\mathbb{R}^N)~\text{for}~r\in\{p,N\},\\
       u_n\to v_{\lambda_1,\lambda_2} ~\text{in}~L^\tau(\mathbb{R}^N),~\text{for any}~\tau\in[p,p^\ast)\cup[N,\infty),\\
     u_n\to v_{\lambda_1,\lambda_2} ~\text{in}~L^q(\mathbb{R}^N,|x|^{-\beta}),~\text{for any}~q\in [N,\infty),\\
      u_n\to v_{\lambda_1,\lambda_2} ~\text{a.e. in}~\mathbb{R}^N,~
      \nabla u_n\to \nabla v_{\lambda_1,\lambda_2} ~\text{a.e. in}~\mathbb{R}^N,\\
      |\nabla u_n|^{r-2}\nabla u_n\rightharpoonup |\nabla v_{\lambda_1,\lambda_2}|^{r-2}\nabla v_{\lambda_1,\lambda_2}~\text{in}~L^{r^\prime}(\mathbb{R}^N)~\text{for}~r\in\{p,N\},\\
      V^{\frac{1}{r^\prime}} u_n^{r-1} \rightharpoonup V^{\frac{1}{r^\prime}} v_{\lambda_1,\lambda_2}^{r-1}~\text{in}~L^{r^\prime}(\mathbb{R}^N)~\text{for}~r\in\{p,N\}~\text{as}~n\to\infty.
      \end{cases}  
  \end{equation} 
However, from \eqref{eq4.27} and Br\'ezis-Lieb lemma, we have for $r\in\{p,N\}$ 
\begin{equation}\label{eq4.28}
    \begin{cases}
        \|u_n-v_{\lambda_1,\lambda_2}\|^r_{W_V^{1,r}}=\|u_n\|^{r}_{W_V^{1,r}}-\|v_{\lambda_1,\lambda_2}\|^r_{W_V^{1,r}}+o_n(1),\\
        \|u_n-v_{\lambda_1,\lambda_2}\|^r_{W^{1,r}}=\|u_n\|^{r}_{W^{1,r}}-\|v_{\lambda_1,\lambda_2}\|^r_{W^{1,r}}+o_n(1),
    \end{cases}
\end{equation}
as $n\to\infty$. Further, by \eqref{eq4.27}, we can assume that up to a subsequence, $\|u_n\|_{W^{1,p}_V}\to l_p$ and $\|u_n\|_{W^{1,N}_V}\to l_N$ as $n\to\infty$, where $l_p,l_N\geq 0$. Next, we obtain from \eqref{eq4.28} that 
 $$\|u_n-v_{\lambda_1,\lambda_2}\|^N_{W^{1,N}}=\|u_n\|^{N}_{W^{1,N}}-\|v_{\lambda_1,\lambda_2}\|^N_{W^{1,N}}+o_n(1)\leq \|u_n\|^{N}_{W^{1,N}}+o_n(1)~\text{as}~n\to\infty. $$   
Passing $n\to\infty$ in the above inequality and using \eqref{eq4.6}, we get
    \begin{align*}\limsup_{n\to\infty}\|u_n-v_{\lambda_1,\lambda_2}\|^{N^\prime}_{W^{1,N}}&\leq \limsup_{n\to\infty}\|u_n\|^{N^\prime}_{W^{1,N}}\leq \bigg(\frac{c+\Gamma\lambda_1^{\sigma}}{\Upsilon\big(\text{min}\{1,V_0\}\big)^{k+1}}\bigg)^{\frac{1}{(N-1)(k+1)}}<\bigg(1-\frac{\beta}{N}\bigg)\frac{\alpha_N}{2^{N^\prime}\alpha_0},\end{align*}
    thanks to \eqref{eq4.18}. It follows that
    \begin{equation}\label{eq4.29}
      \limsup_{n\to\infty}\|u_n-v_{\lambda_1,\lambda_2}\|^{N^\prime}_{W^{1,N}}<\bigg(1-\frac{\beta}{N}\bigg)\frac{\alpha_N}{2^{N^\prime}\alpha_0}<\bigg(1-\frac{\beta}{N}\bigg)\frac{\alpha_N}{\alpha_0}.  
    \end{equation}
 It follows from \eqref{eq4.29} that there exist $m_4>0$ and $n_0\in\mathbb{N}$ sufficiently large such that $\|u_n-v_{\lambda_1,\lambda_2}\|^{N^\prime}_{W^{1,N}}<m_4<\big(1-\frac{\beta}{N}\big)\frac{\alpha_N}{\alpha_0}$, for all $n\geq n_0$. Choose $r\geq N$, with $r^\prime=\frac{r}{r-1}>1$ satisfying $\frac{1}{r}+\frac{1}{r^\prime}=1$ . Let $r^\prime$ close to 1 and $\alpha>\alpha_0$ close to $\alpha_0$ such that we still have $r^\prime\alpha\|u_n-v_{\lambda_1,\lambda_2}\|^{N^\prime}_{W^{1,N}}<m_4<\big(1-\frac{\beta}{N}\big)\alpha_N$, for all $n\geq n_0$. Choose
     $$\widetilde{\nu}_n=\begin{cases}
        \frac{u_n-v_{\lambda_1,\lambda_2}}{\|u_n-v_{\lambda_1,\lambda_2}\|_{W^{1,N}}}~\text{if}~u_n-v_{\lambda_1,\lambda_2}\neq 0,\\
        0~~~~~~~~~~~~~~~~~~~~~~\text{if}~u_n-v_{\lambda_1,\lambda_2}=0.
    \end{cases}$$
 Now, by $(G1)$, H\"older's inequality, Corollary \ref{cor2.9} and \eqref{eq4.27}, we obtain 
\begin{align*}\bigg|\int_{\mathbb{R}^N}\cfrac{g(x,u_n-v_{\lambda_1,\lambda_2})(u_n-v_{\lambda_1,\lambda_2})}{|x|^\beta}~\mathrm{d}x\bigg|&\leq b_1\|u_n-v_{\lambda_1,\lambda_2}\|^{N(k+1)}_{L^{N(k+1)}(\mathbb{R}^N,|x|^{-\beta})}\\
&\qquad+b_2\Bigg\|\frac{\Phi\big(\alpha_0|u_n-v_{\lambda_1,\lambda_2}|^{N^\prime}\big)}{|x|^{\frac{\beta}{r^\prime}}}\Bigg\|_{r^\prime}\Bigg\| \frac{u_n-v_{\lambda_1,\lambda_2}}{|x|^{\frac{\beta}{r}}}\Bigg\|_{r} \\
&\leq b_1\|u_n-v_{\lambda_1,\lambda_2}\|^{N(k+1)}_{L^{N(k+1)}(\mathbb{R}^N,|x|^{-\beta})}\\
&\quad+b_2 \Bigg[\int_{\mathbb{R}^N}\cfrac{\Phi\big(r^\prime\alpha_0|u_n-v_{\lambda_1,\lambda_2}|^{N^\prime}\big)}{|x|^\beta}~\mathrm{d}x\Bigg]^{\frac{1}{r^\prime}}\|u_n-v_{\lambda_1,\lambda_2}\|_{L^r(\mathbb{R}^N,|x|^{-\beta})}\\
&= b_1\|u_n-v_{\lambda_1,\lambda_2}\|^{N(k+1)}_{L^{N(k+1)}(\mathbb{R}^N,|x|^{-\beta})}\\
&\quad+b_2\Bigg[\int_{\mathbb{R}^N}\cfrac{\Phi\big(r^\prime\alpha_0\|u_n-v_{\lambda_1,\lambda_2}\|_{{W^{1,N}}}^{N^\prime}|\widetilde{\nu}_n|^{N^\prime}\big)}{|x|^\beta}~\mathrm{d}x\Bigg]^{\frac{1}{r^\prime}}\|u_n-v_{\lambda_1,\lambda_2}\|_{L^r(\mathbb{R}^N,|x|^{-\beta})} \\
&\leq b_1\|u_n-v_{\lambda_1,\lambda_2}\|^{N(k+1)}_{L^{N(k+1)}(\mathbb{R}^N,|x|^{-\beta})}+C_5\|u_n-v_{\lambda_1,\lambda_2}\|_{L^r(\mathbb{R}^N,|x|^{-\beta})}\\
&\quad\to 0~\text{as}~n\to\infty,  
\end{align*}
where $$C_5=b_2\Bigg[\sup_{n\geq n_0}\int_{\mathbb{R}^N}\cfrac{\Phi\big(r^\prime\alpha_0\|u_n-v_{\lambda_1,\lambda_2}\|_{{W^{1,N}}}^{N^\prime}|\widetilde{\nu}_n|^{N^\prime}\big)}{|x|^\beta}~\mathrm{d}x\Bigg]^{\frac{1}{r^\prime}}<\infty,$$
due to Lemma \ref{lem2.11} and remark \ref{rem2.10}.
It follows that 
\begin{equation}\label{eq4.30}
  \lim_{n\to\infty}  \int_{\mathbb{R}^N}\cfrac{g(x,u_n-v_{\lambda_1,\lambda_2})(u_n-v_{\lambda_1,\lambda_2})}{|x|^\beta}~\mathrm{d}x=0.
\end{equation}
In view of $(G1)$, $(G2)$ and Fatou's lemma, we have
\begin{equation}\label{eq4.31}
    \int_{\mathbb{R}^N} g(x,v_{\lambda_1,\lambda_2})v_{\lambda_1,\lambda_2}~\mathrm{d}x\leq\liminf_{n\to\infty} \int_{\mathbb{R}^N} g(x,u_n)v_{\lambda_1,\lambda_2}~\mathrm{d}x.
\end{equation}
Similar to the prove of \eqref{eq3.19}, we can easily obtain
 \begin{equation}\label{eq4.32}
   \lim_{n\to\infty} \int_{\mathbb{R}^N}h(x)u_n^{q-1}(u_n-v_{\lambda_1,\lambda_2})~\mathrm{d}x=0. 
 \end{equation}
 Now, by using \eqref{eq4.19}, \eqref{eq4.27}, \eqref{eq4.31} and \eqref{eq4.32}, we have
 \begin{align*}0&=\lim_{n\to\infty}\langle{J^\prime_{\lambda_1,\lambda_2}(u_n),u_n-v_{\lambda_1,\lambda_2}}\rangle \\
 &=\sum_{r\in\{p,N\}}\Bigg( M(l^r_r) \lim_{n\to\infty}\Bigg[\|u_n\|^r_{W^{1,r}_V}-\int_{\mathbb{R}^N}\big(|\nabla u_n|^{r-2}\nabla u_n. \nabla v_{\lambda_1,\lambda_2}+V(x)u_n^{r-1}v_{\lambda_1,\lambda_2}\big)~\mathrm{d}x\Bigg]\Bigg)\\
 &\qquad-\lambda_1 \lim_{n\to\infty} \int_{\mathbb{R}^N}h(x)u_n^{q-1}(u_n-v_{\lambda_1,\lambda_2})~\mathrm{d}x -\lambda_2 \lim_{n\to\infty} \int_{\mathbb{R}^N}\cfrac{g(x,u_n)(u_n-v_{\lambda_1,\lambda_2})}{|x|^\beta}~\mathrm{d}x\\
 &\geq \sum_{r\in\{p,N\}}\bigg( M(l^r_r)\big(l^r_r-\|v_{\lambda_1,\lambda_2}\|^r_{W^{1,r}_V}\big)\bigg)-\lambda_2 \lim_{n\to\infty} \int_{\mathbb{R}^N}\cfrac{g(x,u_n)u_n-g(x,v_{\lambda_1,\lambda_2})v_{\lambda_1,\lambda_2}}{|x|^\beta}~\mathrm{d}x. \end{align*}
 It follows from Lemma \ref{lem4.3}, \eqref{eq4.28} and \eqref{eq4.30} that
 \begin{align*} \sum_{r\in\{p,N\}}\Big( M(l^r_r)\lim_{n\to\infty} \|u_n-v_{\lambda_1,\lambda_2}\|^r_{W^{1,r}_V}\Big)\leq \lambda_2  \lim_{n\to\infty}  \int_{\mathbb{R}^N}\cfrac{g(x,u_n-v_{\lambda_1,\lambda_2})(u_n-v_{\lambda_1,\lambda_2})}{|x|^\beta}~\mathrm{d}x=0.\end{align*}
 Consequently, we obtain
 \begin{equation}\label{eq4.33}
  M(l^p_p)\lim_{n\to\infty} \|u_n-v_{\lambda_1,\lambda_2}\|^p_{W^{1,p}_V}+M(l^N_N)\lim_{n\to\infty} \|u_n-v_{\lambda_1,\lambda_2}\|^N_{W^{1,N}_V}=0.   
 \end{equation}
\textit{Case-1:} Let $l_p=l_N=0$. This situation cannot occur. Indeed, if $l_p=l_N=0$, then $u_n\to 0$ in $\mathbf{X}$ as $n\to\infty$. It follows from \eqref{eq4.19} that $0=J^\prime_{\lambda_1,\lambda_2}(0)=c>0$, which is absurd situation.\\
\textit{Case-2}: Let either $l_p=0$ and $l_N\neq 0$ or $l_p\neq0$ and $l_N= 0$. We only discuss the situation when $l_p=0$ and $l_N\neq 0$. Since $l_p=0$ and $l_N\neq 0$, therefore it follows from \eqref{eq4.33} and $(M)$ that $u_n\to v_{\lambda_1,\lambda_2} $ in $W^{1,N}_V(\mathbb{R}^N)$ as $n\to\infty$. Consequently, we have $\|u_n\|_{W^{1,N}_V}\to \|v_{\lambda_1,\lambda_2}\|_{W^{1,N}_V}$ in $\mathbb{R}$ as $n\to\infty$. Thus, we have
\begin{equation}\label{eq4.34}
 \|v_{\lambda_1,\lambda_2}\|_{W^{1,N}_V}=l_N>0.   
\end{equation}

On the other hand, since $l_p=0$, therefore $u_n\to 0$ in $W^{1,p}_V(\mathbb{R}^N)$ as $n\to\infty$. Hence, we have that $\nabla u_n\to 0$ in $L^p(\mathbb{R}^N)$ as $n\to\infty$ and $ u_n\to 0$ in $L^p_V(\mathbb{R}^N)$ as $n\to\infty$. Now, going further subsequence, $\nabla u_n\to 0$ a.e. in $\mathbb{R}^N$ as $n\to\infty$ and $u_n\to 0$ a.e. in $\mathbb{R}^N$ as $n\to\infty$. It follows that $\nabla v_{\lambda_1,\lambda_2}=0$ a.e. in $\mathbb{R}^N$ and $ v_{\lambda_1,\lambda_2}=0$ a.e. in $\mathbb{R}^N$, which contradicts \eqref{eq4.34}. Thus, this situation is also impossible.\\
\textit{Case-3}: Let $l_p\neq 0$ and $l_N\neq 0$. Then, it follows immediately from \eqref{eq4.33} and $(M)$ that $u_n\to v_{\lambda_1,\lambda_2}$ in $W^{1,r}_V(\mathbb{R}^N)$ as $n\to\infty$, for $r\in\{p,N\}$. Hence, we deduce that $u_n\to v_{\lambda_1,\lambda_2}$ in $\mathbf{X}$ as $n\to\infty$. Moreover, we get $J_{\lambda_1,\lambda_2}(v_{\lambda_1,\lambda_2})=c>0$ and $J^\prime_{\lambda_1,\lambda_2}(v_{\lambda_1,\lambda_2})=0$. This shows that $v_{\lambda_1,\lambda_2}\neq 0$ and $v_{\lambda_1,\lambda_2}$ is a nontrivial nonnegative solution of \eqref{main problem}. Hence, the proof is completed.
\end{proof}

Let $\lambda_1\in(0,\lambda_1^\ast]$, where $\lambda_1^\ast$ as in Lemma \ref{lem3.3} and $\lambda_2>0$ be fixed. Note that the mountain pass geometrical structures of the energy functional $J_{\lambda_1,\lambda_2}$ are satisfied (see Lemma \ref{lem3.2} \& Lemma \ref{lem3.3}). Now, define a mountain pass level $c_{\lambda_1,\lambda_2}$ for $J_{\lambda_1,\lambda_2}$ by
\begin{equation}\label{eq4.35}   c_{\lambda_1,\lambda_2}=\inf_{\gamma\in\Lambda}\max_{\tau\in[0,1] } J_{\lambda_1,\lambda_2}(\gamma(\tau)),
\end{equation}
where $\Lambda=\{\gamma\in\mathcal{C}([0,1],\mathbf{X}):~\gamma(0)=0,~J_{\lambda_1,\lambda_2}(\gamma(1))<0\}$. Using Lemma \ref{lem3.3}, it is obvious that $c_{\lambda_1,\lambda_2}>0$. 

\par To apply the mountain pass theorem, which provides us the second independent solution for \eqref{main problem}, we shall show that $c_{\lambda_1,\lambda_2}$ falls into the range of validity of the $(PS)_c$ condition given in Lemma \ref{lem4.5}.
\begin{lemma}\label{lem4.6}
Let $\lambda_2>\lambda_2^\ast$, where $\lambda_2^\ast$ is defined as in \eqref{eq4.15}. Then, there exists $\tilde{\lambda}_1\in (0,\lambda_1^\ast]$, depending on $\lambda_2$, such that $c_{\lambda_1,\lambda_2}<c_0-\Gamma\lambda_1^{\sigma}$ for any $\lambda_1\in(0,\tilde{\lambda}_1]$, with $c_0$ satisfying \eqref{eq4.16} and $\sigma,\Gamma$ as in Lemma \ref{lem4.1}.
\end{lemma}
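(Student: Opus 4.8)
The plan is to bound the mountain pass value $c_{\lambda_1,\lambda_2}$ from above by testing the definition \eqref{eq4.35} against the single ray through the function $e$ supplied by Lemma \ref{lem3.2}, thereby reducing the estimate to a one–variable maximization whose outcome is \emph{independent of} $\lambda_1$. Since $J_{\lambda_1,\lambda_2}(e)<0$ by Lemma \ref{lem3.2}, the segment $\gamma(\tau)=\tau e$ lies in $\Lambda$, so
\begin{equation*}
c_{\lambda_1,\lambda_2}\le \max_{\tau\in[0,1]}J_{\lambda_1,\lambda_2}(\tau e)\le \max_{t\ge 0}J_{\lambda_1,\lambda_2}(te).
\end{equation*}
The key observation is that once I produce a bound $\max_{t\ge 0}J_{\lambda_1,\lambda_2}(te)\le \mathcal{K}(\lambda_2)<c_0$ with $\mathcal{K}(\lambda_2)$ not depending on $\lambda_1$, the conclusion is immediate: because $\Gamma\lambda_1^{\sigma}\to 0$ as $\lambda_1\to 0^+$, I may pick $\widetilde{\lambda}_1\in(0,\lambda_1^\ast]$ so small that $\Gamma\lambda_1^{\sigma}<c_0-\mathcal{K}(\lambda_2)$ for all $\lambda_1\in(0,\widetilde{\lambda}_1]$, whence $c_{\lambda_1,\lambda_2}\le \mathcal{K}(\lambda_2)<c_0-\Gamma\lambda_1^{\sigma}$.

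To produce $\mathcal{K}(\lambda_2)$, I first discard the nonpositive term $-\tfrac{\lambda_1 t^q}{q}\int_{\mathbb{R}^N}h(e^+)^q\,\mathrm{d}x$ and insert $\widehat{M}(s)=\tfrac{a}{k+1}s^{k+1}$ together with the lower bound $G(x,te)\ge \tfrac{2C}{\xi}t^{\xi}e^{\xi}$ from $(G5)$, obtaining for all $t\ge 0$
\begin{equation*}
J_{\lambda_1,\lambda_2}(te)\le \sum_{r\in\{p,N\}}\frac{a}{r(k+1)}\,t^{r(k+1)}\|e\|_{W^{1,r}_V}^{r(k+1)}-\frac{2C\lambda_2}{\xi}\,t^{\xi}\|e\|_{L^\xi(\mathbb{R}^N,|x|^{-\beta})}^{\xi}=:\Psi(t).
\end{equation*}
Splitting the single negative monomial into two equal halves and assigning one to each $r\in\{p,N\}$, I bound $\max_{t\ge0}\Psi(t)\le\sum_{r}\max_{t\ge0}\big(\tfrac{a}{r(k+1)}t^{r(k+1)}\|e\|_{W^{1,r}_V}^{r(k+1)}-\tfrac{C\lambda_2}{\xi}t^{\xi}\|e\|_{L^\xi}^{\xi}\big)$. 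Each inner maximum is elementary: a map $t\mapsto \alpha t^{a}-\gamma t^{b}$ with $b=\xi>a=r(k+1)$ attains its maximum $\alpha\tfrac{b-a}{b}\big(\tfrac{a\alpha}{b\gamma}\big)^{a/(b-a)}$ at an interior point, and a direct computation identifies the $r$-th summand with the $r$-th term entering $\mathcal{D}_2$ in \eqref{eq4.15}, multiplied by $\lambda_2^{-r(k+1)/(\xi-r(k+1))}$.

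Because $\lambda_2>\lambda_2^\ast\ge 1$ and $p<N$ force $\tfrac{N(k+1)}{\xi-N(k+1)}\ge\tfrac{p(k+1)}{\xi-p(k+1)}$ (the map $x\mapsto x/(\xi-x)$ being increasing, with both arguments below $\xi$ by $(G5)$), the $r=N$ power of $\lambda_2$ is dominated by the $r=p$ one, so the two summands merge into $\mathcal{K}(\lambda_2)=\mathcal{D}_2\,\lambda_2^{-p(k+1)/(\xi-p(k+1))}$. Finally the right–hand inequality in \eqref{eq4.16}, namely $c_0^{-1}<\lambda_2^{p(k+1)/(\xi-p(k+1))}/\mathcal{D}_2$, is exactly $\mathcal{D}_2\lambda_2^{-p(k+1)/(\xi-p(k+1))}<c_0$, i.e. $\mathcal{K}(\lambda_2)<c_0$, closing the argument as in the first paragraph. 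The only genuine obstacle is the pair of competing positive powers $t^{p(k+1)}$ and $t^{N(k+1)}$ in $\Psi$, which blocks a closed–form maximization of the full sum; the halving trick, combined with $\lambda_2\ge 1$ to collapse the two $\lambda_2$–exponents, is precisely what lets the constant $\mathcal{D}_2$ of \eqref{eq4.15} do its job, and is the reason the threshold $\lambda_2^\ast$ is taken $\ge 1$. One must of course use throughout the same $e$ fixed before \eqref{eq4.15}, so that the norms produced by the maximization coincide with those defining $\mathcal{D}_2$.
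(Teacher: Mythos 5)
Your proposal is correct, and its core coincides with the paper's own argument: you test \eqref{eq4.35} on the ray $\tau\mapsto\tau e$ through the function $e$ of Lemma \ref{lem3.2}, replace $G$ by the lower bound of $(G5)$, split the single term $-\tfrac{2C\lambda_2}{\xi}t^{\xi}\|e\|^{\xi}_{L^\xi(\mathbb{R}^N,|x|^{-\beta})}$ into two equal halves assigned to $r=p$ and $r=N$, maximize each one-variable function explicitly, and collapse the two powers of $\lambda_2$ using $\lambda_2>\lambda_2^\ast\geq 1$ together with the monotonicity of $x\mapsto x/(\xi-x)$ on $[0,\xi)$, before invoking \eqref{eq4.16}. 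Where you genuinely differ is the endgame, i.e.\ how the term $-\Gamma\lambda_1^{\sigma}$ is accommodated. The paper splits the ray into $[0,s_0]$ and $[s_0,\infty)$, keeps the weight term $-\tfrac{\lambda_1 s_0^q}{q}\|e\|^q_{q,h}$ in the estimate on $[s_0,\infty)$, and absorbs $\Gamma\lambda_1^{\sigma}$ into that term for small $\lambda_1$ (possible since $\sigma>1$), which requires two separate smallness thresholds $\bar{\lambda}_1$ and $\bar{\bar{\lambda}}_1$. You instead discard all $\lambda_1$-dependence from the upper bound, obtain the $\lambda_1$-free quantity $\mathcal{K}(\lambda_2)=\mathcal{D}_2\,\lambda_2^{-p(k+1)/(\xi-p(k+1))}$, and exploit the fixed positive gap $c_0-\mathcal{K}(\lambda_2)>0$ guaranteed by the strict inequality in \eqref{eq4.16} to choose $\widetilde{\lambda}_1$; this is cleaner, needs only $\sigma>0$ rather than $\sigma>1$, and makes no use of the weight $h$ or of the two-region split. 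One caveat, which you inherit from the paper rather than introduce: your general formula $\max_{t\geq0}\big(\alpha t^{a}-\gamma t^{b}\big)=\alpha\tfrac{b-a}{b}\big(\tfrac{a\alpha}{b\gamma}\big)^{a/(b-a)}$ is right, but instantiating it produces the factor $(C\lambda_2)^{-r(k+1)/(\xi-r(k+1))}$, whereas the terms of $\mathcal{D}_2$ in \eqref{eq4.15} carry $C^{-r(k+1)\xi/(\xi-r(k+1))}$; the identification with $\mathcal{D}_2$ is therefore exact only up to this mismatch in the power of $C$. The paper's own display contains the identical discrepancy, so this is a typo-level defect of the constants in \eqref{eq4.15}, not a flaw in your reasoning, and it does not affect the structure of either argument.
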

\begin{proof}
Let $\lambda_2>\lambda_2^\ast$ be fixed and $e\in\mathbf{X}$ as in Lemma \ref{lem3.2}. Further, assume $\Bar{\lambda}_1\in(0,\lambda_1^\ast]$ sufficiently small enough such that $c_0\geq 2\Gamma\lambda_1^{\sigma}$ holds for all $\lambda_1\in(0,\Bar{\lambda}_1]$. It is easy to see that $J_{\lambda_1,\lambda_2}(s e)\to 0$ as $s\to 0^+$. Thus, we can choose $s_0$ very small such that 
\begin{equation}\label{eq4.36}
    \sup_{s\in[0,s_0]} J_{\lambda_1,\lambda_2}(s e)<\frac{c_0}{2}\leq c_0-\Gamma\lambda_1^{\sigma},~\text{for any}~\lambda_1\in(0,\Bar{\lambda}_1].
\end{equation}
Notice that $\lambda_2>1$ and $N>p$, therefore by using $(G5)$, we obtain
\begin{align*} \sup_{s\geq s_0} J_{\lambda_1,\lambda_2}&(s e)\leq \sup_{s\geq s_0}\bigg(\sum_{r\in\{p,N\}}\Bigg[\frac{as^{r(k+1)}}{r(k+1)}\|e\|^{r(k+1)}_{W^{1,r}_V}\Bigg]-\frac{\lambda_1 s^q}{q}\|e\|^q_{q,h}-\frac{2\lambda_2 C s^\xi}{\xi}\|e\|^\xi_{L^\xi(\mathbb{R}^N,|x|^{-\beta})}\bigg)\\
&\leq \sum_{r\in\{p,N\}}\Bigg[\max_{s\geq 0}\bigg(\frac{a s^{r(k+1)}}{r(k+1)}\|e\|^{r(k+1)}_{W^{1,r}_V}-\frac{\lambda_2 C s^\xi}{\xi}\|e\|^\xi_{L^\xi(\mathbb{R}^N,|x|^{-\beta})}\bigg)\Bigg] -\frac{\lambda_1 s_0^q}{q}\|e\|^q_{q,h}\\
&= \sum_{r\in\{p,N\}}\Bigg[\bigg(\frac{1}{r(k+1)}-\frac{1}{\xi}\bigg)\Bigg(\frac{\|e\|_{W^{1,r}_V}}{\|e\|_{L^\xi(\mathbb{R}^N,|x|^{-\beta})}}\Bigg)^{\frac{r(k+1)\xi}{\xi-r(k+1)}}\frac{a^{\frac{\xi}{\xi-r(k+1)}}}{(C\lambda_2)^{\frac{r(k+1)\xi}{\xi-r(k+1)}}}\Bigg] -\frac{\lambda_1 s_0^q}{q}\|e\|^q_{q,h} \\
&< {\lambda_2}^{-\frac{p(k+1)\xi}{\xi-p(k+1)}}\Bigg(\sum_{r\in\{p,N\}}\Bigg[\bigg(\frac{1}{r(k+1)}-\frac{1}{\xi}\bigg)\Bigg(\frac{\|e\|_{W^{1,r}_V}}{\|e\|_{L^\xi(\mathbb{R}^N,|x|^{-\beta})}}\Bigg)^{\frac{r(k+1)\xi}{\xi-r(k+1)}}\frac{a^{\frac{\xi}{\xi-r(k+1)}}}{{C}^{\frac{r(k+1)\xi}{\xi-r(k+1)}}}\Bigg]\Bigg)-\frac{\lambda_1 s_0^q}{q}\|e\|^q_{q,h}\\
&<c_0-\frac{\lambda_1 s_0^q}{q}\|e\|^q_{q,h},
\end{align*}
thanks to \eqref{eq4.16}. Choose $\Bar{\Bar{\lambda}}_1\in(0,\lambda_1^\ast]$ very small such that
  $$-\frac{\lambda_1 s_0^q}{q}\|e\|^q_{q,h}<-\Gamma\lambda_1^{\sigma},~\text{for any}~\lambda_1\in(0,\Bar{\Bar{\lambda}}_1].$$  
It follows that
\begin{equation}\label{eq4.38}
 \sup_{s\geq s_0} J_{\lambda_1,\lambda_2}(s e)< c_0-\Gamma\lambda_1^{\sigma},~\text{for any}~\lambda_1\in(0,\Bar{\Bar{\lambda}}_1].  
\end{equation}
Let us choose $\tilde{\lambda}_1=\min\{\Bar{\lambda}_1,\Bar{\Bar{\lambda}}_1\}$ and the path $\gamma(s)=s e$ belongs to $\Lambda$, where $s\in[0,1]$. Now, from \eqref{eq4.36} and \eqref{eq4.38}, we obtain
  $$ c_{\lambda_1,\lambda_2}\leq\max_{s\in[0,1] } J_{\lambda_1,\lambda_2}(\gamma(s))\leq \max_{s\geq 0 } J_{\lambda_1,\lambda_2}(\gamma(s))< c_0-\Gamma\lambda_1^{\sigma},~\text{for any}~\lambda_1\in(0,\tilde{\lambda}_1].$$    
This completes the proof.
\end{proof}
\begin{proof}[\textbf{Proof of Theorem \ref{thm1.2}}.]
The energy functional $J_{\lambda_1,\lambda_2}$ enjoys the mountain pass geometric structures for all $\lambda_1\in(0,\lambda_1^\ast]$ and for all $\lambda_2$, thanks to Lemma \ref{lem3.2} and Lemma \ref{lem3.3}. Further, by Lemma \ref{lem4.5} and Lemma \ref{lem4.6}, we have for $\lambda_2>\lambda_2^\ast$, there exists $\tilde{\lambda}_1\leq {\lambda}^\ast_1$ such that $J_{\lambda_1,\lambda_2}$ admits a critical point $v_{\lambda_1,\lambda_2}\in\mathbf{X}$ at the mountain pass level $c_{\lambda_1,\lambda_2}>0$, estimated as in Lemma \ref{lem4.6} for all $\lambda_1\in(0,\tilde{\lambda}_1]$. Also, by Lemma \ref{lem4.5}, we can see that $v_{\lambda_1,\lambda_2}\in\mathbf{X}$ is a nontrivial nonnegative solution of \eqref{main problem}, which is independent of the solution $u_{\lambda_1,\lambda_2}$ of \eqref{main problem} obtained in Theorem \ref{thm1.1}, since $J_{\lambda_1,\lambda_2}(u_{\lambda_1,\lambda_2})=m_{\lambda_1,\lambda_2}<0<c_{\lambda_1,\lambda_2}=J_{\lambda_1,\lambda_2}(v_{\lambda_1,\lambda_2})$. This finishes the proof of Theorem \ref{thm1.2}.
\end{proof}
\section*{Acknowledgements} DKM would like to express his heartfelt gratitude to the DST INSPIRE Fellowship DST/INSPIRE/03/2019/000265 supported by Govt. of India.TM gratefully thanks the assistance of the Start up Research Grant from DST-SERB, sanction no. SRG/2022/000524. AS was supported by the DST-INSPIRE Grant DST/INSPIRE/04/2018/002208 sponsored by Govt. of India.
 
\bibliography{ref}
\bibliographystyle{abbrv}
\Addresses
\end{document}